\documentclass[pdflatex,sn-mathphys-num]{sn-jnl}

\usepackage{graphicx}%
\usepackage{multirow}%
\usepackage{amsmath,amssymb,amsfonts}%
\usepackage{amsthm}%
\usepackage{mathrsfs,mathtools}%
\usepackage[title]{appendix}%
\usepackage{xcolor}%
\usepackage{textcomp}%
\usepackage{manyfoot}%
\usepackage{booktabs}%
\usepackage{algorithm}%
\usepackage{algorithmicx}%
\usepackage{algpseudocode}%
\usepackage{listings}%
\usepackage{enumitem}
\usepackage[nameinlink,capitalise,noabbrev]{cleveref}
\usepackage{accents}

\theoremstyle{thmstyleone}%
\newtheorem{theorem}{Theorem}
\theoremstyle{thmstyletwo}%
\newtheorem{remark}{Remark}%

\theoremstyle{thmstylethree}%
\newtheorem{lemma}{Lemma}
\newtheorem{problem}{Problem}%
\newlist{assump}{enumerate}{2}
\setlist[assump,1]{label=(A\arabic*),ref=(A\arabic*),leftmargin=*, labelsep=1em}
\setlist[assump,2]{label=(A\arabic{assumpi}.\arabic*),ref=(A\arabic{assumpi}.\arabic*)}
\setlist[assump]{resume}
\crefname{assumpi}{assumption}{assumptions}
\crefname{assumpii}{assumption}{assumptions}

\newlist{bcs}{enumerate}{1}
\setlist[bcs,1]{label=(BC\arabic*),ref=(BC\arabic*)}
\crefname{bcsi}{boundary conditions}{boundary conditions}

\def\RR{\mathbb{R}}

\def\M{M}
\def\K{K}

\def\I{\mathbf{I}}
\def\G{\mathbf{G}}
\def\u{\mathbf{u}}
\def\vv{\mathbf{v}}
\def\w{\mathbf{w}}
\def\x{\mathbf{x}}
\def\y{\mathbf{y}}
\def\b{\mathbf{b}}
\def\vtheta{\vartheta}
\def\Th{\mathcal{T}_h}
\def\Vh{\mathcal{V}_h}
\def\Qh{\mathcal{Q}_h}
\def\Xh{\mathcal{X}_h}
\def\Itau{\mathcal{I}_\tau}
\def\Itauk{\mathcal{I}_{\tau_k}}
\def\bsig{\boldsymbol{\sigma}}

\def\cskw{\mathbf{c}_{\mathrm{skw}}}

\def\dt{\partial_t}
\def\dphi{\partial_\phi}
\def\dgradphi{\partial_{\nabla\phi}}
\def\dtheta{\partial_\vtheta}
\def\dtau{d^{n+1}_\tau}
\def\Du{\mathrm{D}\u}
\def\Dv{\mathrm{D}\vv}
\def\div{\operatorname{div}}

\def\ds{~\mathrm{d}s}

\def\la{\langle}
\def\ra{\rangle}

\DeclarePairedDelimiter{\norm}{\|}{\|}
\DeclarePairedDelimiter{\snorm}{|}{|}

\def\softd{{\leavevmode\setbox1=\hbox{d}%
		\hbox to 1.05\wd1{d\kern-0.4ex{\char039}\hss}}}

\begin{document}

\title[Structure-preserving approximation for melt flow]{Structure-preserving approximation for non-isothermal phase-field models in melt flow}


\author[1]{\fnm{Aaron} \sur{Brunk}}\email{abrunk@uni-mainz.de}
\equalcont{These authors contributed equally to this work.}
\author*[1]{\fnm{Dennis} \sur{Höhn}}\email{dennis.hoehn@uni-mainz.de}
\equalcont{These authors contributed equally to this work.}

\affil[1]{\orgdiv{Institute of Mathematics}, \orgname{Johannes Gutenberg University}, \\\orgaddress{\street{Staudinger Weg 9}, \city{Mainz}, \postcode{55128}, \country{Germany}}}


\abstract{
        This work presents a conforming finite-element scheme for the non-isothermal Allen-Cahn-Navier-Stokes system, incorporating periodic, closed, and thermal boundary conditions. The system comprises the incompressible Navier-Stokes equations coupled with the non-isothermal Allen-Cahn equation, which includes a non-conserved phase-field equation and a temperature equation.
        The proposed numerical scheme preserves entropy production exactly and maintains total energy conservation up to a negative numerical dissipation. Convergence tests in both space and time are conducted, and representative examples are provided to demonstrate the scheme's effectiveness.}

\keywords{Structure preserving, Non-isothermal, Phase field, Finite elements}



\maketitle

\section{Introduction}

    The melting and resolidification of materials are fundamental processes in diverse applications, ranging from the behaviour of ice in water to industrial manufacturing. These processes encompass various physical mechanisms and a complex interplay of effects, such as undercooling, surface tension, and latent heat. Conventionally, these phenomena are modelled using the classical Stefan problem, which treats heat transfer in the solid and liquid phases separately via a sharp interface~\cite{Rubinstein1971,Alexiades1992}. Nevertheless, sharp-interface models can be insufficient when topological transitions, including droplet breakup, coalescence, or dendritic branching, play a significant role.
    
    To overcome these limitations, phase-field models are widely used in the literature, replacing sharp interfaces with diffuse ones. In this framework, the non-isothermal Allen–Cahn equation and related diffuse-interface models are derived in a thermodynamically consistent manner as approximations to Stefan-type evolution~\cite{Caginalp1986,Caginalp1989}. Phase-field models are particularly prevalent in simulations of additive manufacturing and solidification microstructures~\cite{ProvatasElder2010,Steinbach2009}, with notable applications in dendritic growth, grain growth, and melt-pool morphology~\cite{KarmaRappel1998,BeckermannKarma1997,Liang}. In powder-bed fusion, melt-pool dynamics are significantly affected by Marangoni flow, vapour depression (recoil pressure), and keyhole formation, which subsequently influence spatter and porosity~\cite{King2015,Khairallah2016}.
    
    Both the Stefan problem and its phase-field analogues can be formulated to comply with the fundamental laws of thermodynamics, ensuring conservation of total energy and non-negative entropy production in the absence of external sources~\cite{GurtinFriedAnand2010}. While the Stefan model and non-isothermal phase-field equations effectively describe melting, solidification, and undercooling, they generally neglect fluid-flow effects in the liquid phase. Although this simplification is often justified~\cite{KurzFisher1998}, in additive manufacturing, melt-pool convection, surface-tension-driven flow (Marangoni effect), and recoil-pressure-driven motion can significantly affect the resulting microstructure and therefore must be incorporated~\cite{Khairallah2016,King2015}.
    
    A natural extension involves coupling a non-isothermal Allen–Cahn-type phase-field model with the incompressible Navier–Stokes equations~\cite{ANDERSON2000175}; for increased fidelity, quasi-incompressible or fully compressible formulations may be necessary. Such couplings are well established in the isothermal context for Cahn–Hilliard models and can be analogously derived for Allen–Cahn-type order parameters while preserving thermodynamic consistency~\cite{Freistuhler2016,Heida2011,Anderson1998,Alessia2014}. Achieving thermodynamic consistency requires conservation of total energy, non-negative entropy production, and conservation of linear and angular momentum. When crystalline microstructures or lattice rotations are significant at the continuum scale, the assumption of local angular-momentum symmetry may not hold, thereby motivating the adoption of Cosserat or micropolar continua~\cite{Eringen1999,Forest2009}.
    
    The non-isothermal Allen-Cahn-Navier-Stokes system  (NACNS), which we consider, is given by
    \begin{align}
        \dt\phi + \u\cdot\nabla\phi &= - \M\tfrac{\mu}{\vtheta},  \label{eq:sys1}\\
        \tfrac{\mu}{\vtheta} &= -\div(G_{\nabla\phi}(\nabla\phi)) + \partial_\phi \tfrac{f(\phi,\vtheta)}{\vtheta}, \label{eq:sys2}\\
        \dt e(\phi,\vtheta) + \u\cdot\nabla e(\phi,\vtheta) &= - \div(\K\nabla\tfrac{1}{\vtheta}) + (\eta\Du - \tilde\bsig(\nabla\phi,\vtheta)):\nabla\u + Q, \label{eq:sys3} \\
        \dt \u + (\u\cdot\nabla)\u &= \div(\eta\Du - \tilde p\mathbf{I} - \tilde\bsig(\nabla\phi,\vtheta)) + \b,\label{eq:sys4}\\
        \div(\u) &= 0. \label{eq:sys5}
    \end{align}
    
    In this system, $\phi \in [0,1]$ represents the phase-field variable, where $\phi = 0$ corresponds to the solid phase and $\phi > 0$ to the melt. The term $\tfrac{\mu}{\vtheta}$ denotes the temperature-scaled chemical potential, $\vtheta$ is the temperature, and $e(\phi,\vtheta)$ is the internal energy density. The variables $\u$ and $\tilde p$ represent the flow velocity and pressure, respectively. The temperature-dependent Korteweg stress $\tilde\bsig(\nabla\phi,\vtheta)$ accounts for Marangoni flow. The remaining functions are $\M$ (diffusive mobility), $\K$ (heat conductivity) and $\eta$ (viscosity). The term $\b$ denotes a body force, and $Q$ is the external heat source.

    Under suitable boundary conditions, in the absence of external forces and suitable assumptions on the functions, the model complies with thermodynamic laws through the total energy density $e_{\mathrm{tot}}$ and the entropy density $s$ by
    \begin{gather*}
        \int_\Omega e_{\mathrm{tot}}(\phi(t),\vtheta(t),\u(t))  = \int_\Omega e_{\mathrm{tot}}(\phi(0),\vtheta(0),\u(0)) , \\
        \int_\Omega s(\phi(t),\vtheta(t)) \geq \int_\Omega s(\phi(0),\vtheta(0)).
    \end{gather*}

    The isothermal Allen--Cahn and Cahn--Hilliard equations have been studied extensively from both analytical and numerical perspectives. Analytically, classical and modern works provide well-posedness, existence of generalised (weak or variational) solutions, regularity theory, and asymptotic behaviour; see, for instance, \cite{Elliott1996,NovickCohenPego1991,ColliGilardi1999} for Allen--Cahn-type systems. From a numerical viewpoint, a broad class of stable, convergent, and energy-dissipating time discretisations, finite element schemes, and convex-splitting or SAV-type methods has been developed; see, e.g., \cite{Eyre1998,ShenYang2010,GrunKlingbeil2016}.

    In the non-isothermal setting, the existence of generalised solutions for several variants of the non-isothermal Allen--Cahn and Cahn--Hilliard systems has been established in \cite{alt1992existence,KENMOCHI19941163,Colli_memory,Colli_Penrose,COLLI2024113461}. For diffuse-interface models coupled with fluid flow, thermodynamically consistent models and corresponding well-posedness results are studied in \cite{Eleuteri2015,Lasarzik2021,Lopes2022,WU_2017,Hossain2024}, which cover non-isothermal Navier--Stokes–phase-field systems.
    
    From a numerical perspective, energy-stable schemes for the non-isothermal systems have been proposed in the literature. Several authors have used the inverse temperature as the primary thermal variable to obtain energy-stable finite element approximations \cite{Pawlow2016,GonzalezFerreiro2014}, neglecting velocity. More recently, Brunk et al.~\cite{BrunkCMAM,BrunkPamm,Brunk2025} developed structure-preserving numerical schemes for non-isothermal phase-field–fluid systems with general Helmholtz free energies. While the latter two already incorporate the incompressible Navier--Stokes equations in a fully thermodynamically consistent manner. In a simplified thermodynamic framework, energy- and entropy-stable FEM or EIQ methods were investigated in \cite{RuedaGmez2024,Sun2020,SUN2024161}. Very recent work~\cite{Hoehn26} treats the non-isothermal Cahn--Hilliard equation using the entropy balance in place of the internal-energy equation, resulting in a fully structure-preserving discretisation.
    
    This work aims to analyse the variational structure of the NACNS system \eqref{eq:sys1}–\eqref{eq:sys5} and to develop a systematic spatial and temporal discretisation using the temperature variable rather than the inverse temperature. The principal contributions of this paper are as follows:
    \begin{itemize}
    \item Review of the inherent structures for a broad class of Helmholtz free energies
    \item A reformulation of the system using the entropy equation to highlight the underlying variational structure.
    \item A structure-preserving numerical approximation that utilises conventional discretisation techniques in both space and time, without depending on specialised structure-preserving methods.
    \end{itemize}

    The manuscript is organised as follows. \cref{sec:pre} introduces the notation and assumptions adopted throughout the paper and examines the structures and various formulations inherent in the PDE system. \cref{sec:var} presents a problem-adapted reformulation of the system in variational form while \cref{sec:dis} addresses the discretisations of the preceding weak formulation, the existence of discrete solutions, and their structure-preserving properties. The performance of the scheme is demonstrated through convergence tests in space and time, as well as application-relevant simulations in \cref{sec:num}.

\section{Preliminaries}\label{sec:pre}

    Let us briefly introduce our notation and main assumptions, and recall some basic facts.

    \subsection{Notation}
    
        The system \eqref{eq:sys1}--\eqref{eq:sys5} is investigated on a finite time interval $(0,T)$ and a spatial domain $\Omega$. By $L^p(\Omega)$, $W^{k,p}(\Omega)$, we denote the corresponding Lebesgue and Sobolev spaces with norms $\norm{\cdot}_{L^{p}}$ and $\norm{\cdot}_{W^{k,p}}$. As usual, we abbreviate $H^k(\Omega)=W^{k,2}(\Omega)$ and write $\norm{\cdot}_{H^k} = \norm{\cdot}_{W^{k,2}}$.
        %
        %
        %
        Here $\langle \cdot, \cdot\rangle$ denotes the duality product on $H^{-k}(\Omega) \times H^k(\Omega)$.
        The same symbol is also used for the scalar product on $L^2(\Omega)$, which is defined by
        \begin{align*}
        \la u, v \ra = \int_\Omega u \cdot v \qquad \forall u,v \in L^2(\Omega).    
        \end{align*}
        %
        We consider all vectors to be column vectors and gradients of vector functions correspond to the Jacobian matrix. The divergence of a matrix is applied column wise and the symmetric gradient of a vector function  $\u$ is given by $\Du:=\tfrac{1}{2}(\nabla\u+(\nabla\u)^\top)$ while the inner product between two matrices $\mathbf{A},\mathbf{B}\in\mathbb{R}^{N\times N}$ is defined by $\mathbf{A}:\mathbf{B}=\mathrm{tr}(\mathbf{A}\mathbf{B}^\top)$. We also introduce the well-known skew-symmetric formulation of $c(\u,\vv,\w)=\la(\u\cdot\nabla)\vv,\w\ra$ as
        \begin{align}\label{eq:skew}
             \cskw(\u,\vv,\w)=\tfrac{1}{2}c(\u,\vv,\w)-\tfrac{1}{2}c(\u,\w,\vv).
        \end{align}
    
    \subsection{Assumptions}\label{sec:assump}

        In this subsequent we collect the minimal assumptions for the rest of this manuscript. 

        \begin{assump}
            \item We consider $\Omega \subset \mathbb{R}^d$, $d=1,2,3$ with the following boundary conditions:
            \begin{bcs}
                \item Periodic boundary conditions: \label{bc:periodic}
                \begin{equation}
                    g(x+L_ie_i) = g(x) \text{ for } g\in\{\phi,\mu,\vtheta,\u,p\} \text{ and } i=1,\ldots, d,
                \end{equation}
                where $\Omega=[0,L_1]\times\ldots\times[0,L_d]$ is a (hyper)-cube identified with the $d$-dimensional torus $\mathcal{T}^d$.
                Moreover, other functions on $\Omega$ are also assumed to be periodic.
                \item Closed system boundary conditions: \label{bc:closed}
                \begin{align}
                    \u\vert_{\partial\Omega} &= 0, \quad G_{\nabla\phi}(\nabla\phi)\cdot\mathbf{n}\vert_{\partial\Omega} = 0, \quad  \K\nabla\tfrac{1}{\vtheta}\cdot\mathbf{n}=0,
                \end{align}
            \item Thermal boundary conditions: \label{bc:thermal}
                \begin{align}
                    \u\vert_{\partial\Omega} &= 0,  \quad G_{\nabla\phi}(\nabla\phi)\cdot\mathbf{n}\vert_{\partial\Omega} = 0,  \quad \vtheta\vert_{\partial\Omega}=\vtheta_{b},
                \end{align}
                where $\vtheta_{b}>0$ is constant on $\partial\Omega.$
            \end{bcs}
            \item The diffusive mobility $\M:=\M(\phi,\vtheta)\in\RR$ is positive and bounded, i.e. there exists positive constants $M_1,M_2>0$ such that $M_1\leq \M\leq M_2$.
            \item The heat conductivity $\K:=\K(\phi,\vtheta)\in\RR$ is positive and bounded, i.e. there exists positive constants $K_1,K_2>0$ such that $K_1\leq\K\leq K_2$.
            \item The viscosity $\eta:=\eta(\phi,\vtheta)\in\RR$ is positive and bounded, i.e. there exists positive constants $\eta_1,\eta_2>0$ such that $\eta_1\leq\eta\leq \eta_2$.
            \item The external forces/sources $\b:=\b(x,t)\in\RR^d$ and  $Q:=Q(x,t)\in\RR$ are bounded.
            \item The driving potential $f(\cdot,\cdot):\RR\times\RR_+\to \RR$ is smooth and fulfills that for every fixed $\phi$ the function $f(\phi,\cdot):\RR_+\to\RR$ is concave and goes to infinity for $\vtheta\to 0$.\label{as:pot}
            \item The gradient contribution $G(\nabla\phi):\RR^d\mapsto\RR_+\in C^2(\RR^d)$ is strictly convex with respect to $\nabla\phi$.\label{as:grad}
        \end{assump}

    \subsection{Problem inherent structures}

        To connect the state variables to the internal energy, total energy and entropy we consider an underlying thermodynamic potential given by the Helmholtz free energy, of the form
        \begin{gather}\label{eq:helmholtz}
            F(\phi,\nabla\phi,\vtheta) := \vtheta G(\nabla\phi) + f(\phi,\vtheta).
        \end{gather}
        Following the standard computations one finds that the Korteweg stress is given by
        \begin{equation}\label{eq:korteweg}
            \tilde\bsig:=\vtheta G_{\nabla\phi}(\nabla\phi)\otimes\nabla\phi.
        \end{equation}
        With that choice, we can calculate the internal energy and total energy as well as the entropy density via their relations to the Helmholtz free energy by
        \begin{align}
            s(\phi,\nabla\phi,\vtheta)&:=-\dtheta F(\phi,\nabla\phi,\vtheta) = -G(\nabla\phi) - \partial_\vtheta f(\phi,\vtheta),\label{eq:entropy}\\
            e(\phi,\vtheta)&:=F(\phi,\nabla\phi,\vtheta)+\vtheta s(\phi,\nabla\phi,\vtheta)= f(\phi,\vtheta) - \vtheta \partial_\vtheta f(\phi,\vtheta),\label{eq:energy}\\
            e_{\mathrm{tot}}(\phi,\vtheta,\u) &:= e(\phi,\vtheta) + \tfrac{1}{2}\snorm{\u}^2. \label{eq:totenergy}
        \end{align}

        \subsubsection{Reformulation of the system}\label{sec:refo}
    
            Since these relations, on the formal level, one can choose between several formulations of the temperature contribution which allows to switch between the internal energy $e$ and the entropy $s$. For simplicity we set $s\equiv s(\phi,\nabla\phi,\vtheta)$.
            By rearranging the identity $e=F+\vtheta s$, cf. \eqref{eq:energy}, in terms of the entropy, we get $s=(e-F)/\vtheta$ and compute
            \begin{align*}
                \dt s & = \tfrac{1}{\vtheta}\dt e(\phi,\vtheta) - \tfrac{1}{\vtheta}\dt F(\phi,\nabla\phi,\vtheta) - \tfrac{e(\phi,\vtheta)-F(\phi,\nabla\phi,\vtheta)}{\vtheta^2}\dt \vtheta  \notag\\
                & = \tfrac{1}{\vtheta}\dt e(\phi,\vtheta) - \tfrac{1}{\vtheta}\dphi F(\phi,\nabla\phi,\vtheta)\dt\phi - \tfrac{1}{\vtheta}\dgradphi F(\phi,\nabla\phi,\vtheta)\cdot\nabla\dt\phi\notag\\
                &\quad- \tfrac{1}{\vtheta}\dtheta F(\phi,\nabla\phi,\vtheta)\dt\vtheta - \tfrac{s(\phi,\vtheta)}{\vtheta}\dt \vtheta. \notag \\
                \intertext{Now using $s=-\dtheta F(\phi,\nabla\phi,\vtheta)$, cf. \eqref{eq:entropy}, and substituting in \eqref{eq:helmholtz} shows}
                \dt s & = \tfrac{1}{\vtheta}\dt e(\phi,\vtheta) - G_{\nabla\phi}(\nabla\phi)\cdot\nabla\dt\phi - \tfrac{1}{\vtheta}\dphi f(\phi,\vtheta)\dt\phi  \notag\\ 
                & = \tfrac{1}{\vtheta}\dt e(\phi,\vtheta) - \div\left(G_{\nabla\phi}(\nabla\phi)\dt\phi\right) + \div\left(G_{\nabla\phi}(\nabla\phi)\right)\dt\phi - \tfrac{1}{\vtheta}\dphi f(\phi,\vtheta)\dt\phi  \notag\\
                & = \tfrac{1}{\vtheta}\dt e(\phi,\vtheta) - \div(G_{\nabla\phi}(\nabla\phi)\dt\phi) - \tfrac{\mu}{\vtheta}\dt\phi, 
                \intertext{where in the last step we used the definition of $\tfrac{\mu}{\vtheta}$, c.f. \eqref{eq:sys2}. Similarly we compute}
                \u\cdot\nabla s &= \u\cdot \left( \tfrac{1}{\vtheta}\nabla e(\phi,\vtheta)  - \nabla G(\nabla\phi) - \tfrac{1}{\vtheta}\dphi f(\phi,\vtheta)\nabla\phi \right)\notag \\
                &= \u\cdot \left( \tfrac{1}{\vtheta}\nabla e(\phi,\vtheta) - (G_{\nabla\phi}(\nabla\phi)\cdot\nabla)\nabla\phi \pm \div(G_{\nabla\phi}(\nabla\phi))\nabla\phi \right.\notag\\
                &- \left.\tfrac{1}{\vtheta}\dphi f(\phi,\vtheta)\nabla\phi \right).\notag \\
                \intertext{where we use \eqref{eq:sys2} together with the identity $\div(a\otimes b)=\div(a)b + (a\cdot\nabla)b$ to get}
                \u\cdot\nabla s & = \u\cdot \left( \tfrac{1}{\vtheta}\nabla e(\phi,\vtheta) - \tfrac{\mu}{\vtheta}\nabla \phi -\div(G_{\nabla\phi}(\nabla\phi)\otimes\nabla\phi)\right)\notag\\
                & = \tfrac{1}{\vtheta}\u\cdot\nabla e(\phi,\vtheta) -  \tfrac{\mu}{\vtheta}\u\cdot\nabla \phi - \div(G_{\nabla\phi}(\nabla\phi)\otimes\nabla\phi\cdot\u)\\
                &\quad+ G_{\nabla\phi}(\nabla\phi)\otimes\nabla\phi:\nabla\u. 
            \end{align*}
            By adding these two equations together and inserting the internal energy equation cf. \eqref{eq:sys3}, we then get
            \begin{align}\label{eq:sys3alternative}
                \dt s + \u\cdot\nabla s &= - \tfrac{\mu}{\vtheta}\dt\phi - \tfrac{\mu}{\vtheta}\u\cdot\nabla\phi -\div(\K\nabla\tfrac{1}{\vtheta})\tfrac{1}{\vtheta} + \tfrac{\eta}{\vtheta}\snorm{\Du}^2\notag \\
                &\quad - \div(G_{\nabla\phi}(\nabla\phi)\dt\phi + G_{\nabla\phi}(\nabla\phi)\otimes\nabla\phi\cdot\u) + \tfrac{Q}{\vtheta}.\notag\\
                \intertext{Insertion of the phase-field equation cf. \eqref{eq:sys1} and defining $\bsig:=G_{\nabla\phi}(\nabla\phi)\otimes\nabla\phi$ then yields the first form of the entropy equation for the NACNS system}
                \dt s + \u\cdot\nabla s &= \M(\tfrac{\mu}{\vtheta})^2 - \div(\K\nabla\tfrac{1}{\vtheta})\tfrac{1}{\vtheta} + \tfrac{\eta}{\vtheta}\snorm{\Du}^2 \notag \\
                &\quad - \div(G_{\nabla\phi}(\nabla\phi)\dt\phi + \bsig\cdot\u) + \tfrac{Q}{\vtheta}.
            \end{align}
            Since from an analytical standpoint, equation \eqref{eq:sys3} and \eqref{eq:sys3alternative} are equivalent, we will from now on use the entropy equation instead of the energy equation \eqref{eq:sys3} when referring to the non-isothermal Allen-Cahn-Navier-Stokes system (NACNS). This will allow us, later on, to better discretize the system and avoid some difficulties which would arise by using the internal energy equation instead. By using the reverse product rule on this equation, one can also obtain the second form
            \begin{align}\label{eq:sys3div}
                \dt s + \u\cdot\nabla s &= \M\left(\tfrac{\mu}{\vtheta}\right)^2 + \K\snorm{\nabla\tfrac{1}{\vtheta}}^2 + \tfrac{\eta}{\vtheta}\snorm{\Du}^2 + \tfrac{Q}{\vtheta} \notag \\
                &\quad - \div\left( \tfrac{\K}{\vtheta}\nabla\tfrac{1}{\vtheta} + G_{\nabla\phi}(\nabla\phi)\dt\phi + \bsig\cdot\u\right), 
            \end{align}
            which will be useful for showing the following Lemma.
            
        \subsubsection{Thermodynamic properties}\label{sec:srtucture}
            \begin{lemma}\label{lem:sysstructure}
                Regular solutions $(\phi,\mu,\vtheta,\u,p)$ to the non-isothermal Allen-Cahn-Navier-Stokes system \eqref{eq:sys1},\eqref{eq:sys2},\eqref{eq:sys3alternative},\eqref{eq:sys4},\eqref{eq:sys5} satisfy the following balance laws
                \begin{itemize}
                    \item In the case of periodic \cref{bc:periodic} and closed system \cref{bc:closed} we have total energy balance and entropy production balance , i.e.
                    \begin{align*}
                    \la \dt e_{\mathrm{tot}},1\ra & = \la Q,1\ra + \la\b,\u\ra,\qquad \la \dt s,1 \ra  = \mathcal{D}\left(\tfrac{\mu}{\vtheta},\tfrac{1}{\vtheta},\u\right) + \la Q,\tfrac{1}{\vtheta}\ra.
                    \end{align*} 
                    \item In the case of thermal \cref{bc:thermal} we have exergy dissipation balance, i.e.
                    \begin{align*}
                    \la \dt (e_{\mathrm{tot}} - \vtheta_{b}s),1\ra & = - \vtheta_{b}\mathcal{D}\left(\tfrac{\mu}{\vtheta},\tfrac{1}{\vtheta},\u\right) + \la Q,1-\tfrac{\vtheta_{b}}{\vtheta}\ra + \la\b,\u\ra.
                    \end{align*} 
                \end{itemize}
                with the entropy production
                \begin{equation*}
                    \mathcal{D}\left(\tfrac{\mu}{\vtheta},\tfrac{1}{\vtheta},\u\right) := \int_\Omega\M\left(\tfrac{\mu}{\vtheta}\right)^2 + \K\snorm{\nabla\tfrac{1}{\vtheta}}^2 + \tfrac{\eta}{\vtheta}\snorm{\Du}^2 \geq 0.
                \end{equation*}
                Hence, in absence of external forces and sources, i.e. $Q=\b=0$, this reduces to:
                \begin{align*}
                    \la \dt e_{\mathrm{tot}}, 1 \ra &= 0, &&\text{(total energy conservation)}\qquad\qquad\qquad \\
                    \la \dt s, 1 \ra &\geq 0, &&\text{(entropy production)} \\
                    \qquad\qquad\qquad\la \dt(e_{\mathrm{tot}} - \vtheta_{b}s),1\ra & \leq 0. &&\text{(exergy dissipation)}
                \end{align*}
            \end{lemma}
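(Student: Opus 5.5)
Integrating the equations over $\Omega$ gives all three balance laws; the two nontrivial points are (i) in the energy balance, the Korteweg work cancels the reversible terms hidden in $\dt e$, and (ii) the boundary flux in the thermal case is handled by the exergy combination.

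\textbf{Entropy balance.} Test the second form \eqref{eq:sys3div} with $1$. The convective term vanishes: by \eqref{eq:sys5}, $\u\cdot\nabla s=\div(s\u)$, and $\u=0$ on $\partial\Omega$ (or periodicity) removes it. The divergence term becomes the boundary flux $\int_{\partial\Omega}\big(\tfrac{\K}{\vtheta}\nabla\tfrac1\vtheta+G_{\nabla\phi}(\nabla\phi)\dt\phi+\bsig\cdot\u\big)\cdot\mathbf{n}$. Each piece vanishes under \cref{bc:periodic} and \cref{bc:closed}:
\begin{itemize}
\item the first because $\K\nabla\tfrac1\vtheta\cdot\mathbf{n}=0$;
\item the second because $G_{\nabla\phi}\cdot\mathbf{n}=0$;
\item the third because $\u=0$.
\end{itemize}
This leaves $\la\dt s,1\ra=\mathcal{D}+\la Q,\tfrac1\vtheta\ra$. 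Nonnegativity of $\mathcal{D}$ follows from (A2)--(A4) and $\vtheta>0$.

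\textbf{Total energy balance.} I would not differentiate $e_{\mathrm{tot}}$ directly. Instead I would test \eqref{eq:sys3} with $1$ and \eqref{eq:sys4} with $\u$. Since the lemma is stated for the entropy form \eqref{eq:sys3alternative}, I first invoke the computation of \cref{sec:refo} read backwards: it shows the entropy equation is equivalent to \eqref{eq:sys3} for regular solutions.

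\emph{Internal energy equation.} Testing with $1$ kills three terms. The convective term goes as above. The heat-flux divergence goes because $\K\nabla\tfrac1\vtheta\cdot\mathbf{n}=0$ (or by periodicity). What remains is $\la\dt e,1\ra=\la(\eta\Du-\tilde\bsig):\nabla\u,1\ra+\la Q,1\ra$.

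\emph{Momentum equation.} Testing with $\u$ removes the convective term, since $c(\u,\u,\u)=0$ for divergence-free $\u$ vanishing on the boundary. The pressure term vanishes by \eqref{eq:sys5}. Integrating by parts gives $\tfrac{d}{dt}\tfrac12\|\u\|^2=-\la\eta\Du-\tilde\bsig,\nabla\u\ra+\la\b,\u\ra$, with no boundary term since $\u=0$.

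Summing the two identities, the stress work cancels exactly and we obtain $\la\dt e_{\mathrm{tot}},1\ra=\la Q,1\ra+\la\b,\u\ra$. This cancellation is the key structural point: $\tilde\bsig=\vtheta\bsig$ is precisely the stress appearing in \eqref{eq:sys3}.

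\textbf{Exergy balance under \cref{bc:thermal}.} This is the main obstacle, because the heat flux no longer vanishes on the boundary. Both the energy identity and the entropy identity then carry the boundary term $B:=\int_{\partial\Omega}\K\nabla\tfrac1\vtheta\cdot\mathbf{n}$:
\begin{itemize}
\item the energy identity picks up $-B$ from $-\div(\K\nabla\tfrac1\vtheta)$;
\item the entropy identity picks up $-\int_{\partial\Omega}\tfrac{\K}{\vtheta}\nabla\tfrac1\vtheta\cdot\mathbf{n}$, which equals $-\tfrac{1}{\vtheta_b}B$ because $\vtheta=\vtheta_b$ is constant on $\partial\Omega$.
\end{itemize}
The remaining boundary terms vanish as before, since $G_{\nabla\phi}\cdot\mathbf{n}=0$ and $\u=0$. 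Forming energy minus $\vtheta_b$ times entropy, the two $B$ contributions cancel. The result is $\la\dt(e_{\mathrm{tot}}-\vtheta_b s),1\ra=-\vtheta_b\mathcal{D}+\la Q,1-\tfrac{\vtheta_b}{\vtheta}\ra+\la\b,\u\ra$.

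\textbf{Source-free case.} Setting $Q=\b=0$ and using $\mathcal{D}\ge0$ and $\vtheta_b>0$ yields total energy conservation, entropy production and exergy dissipation.
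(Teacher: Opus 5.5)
Your proof is correct and follows the paper's argument: entropy balance from integrating \eqref{eq:sys3div}, total energy from the equivalent internal-energy equation \eqref{eq:sys3} tested with $1$ plus the momentum equation tested with $\u$ (so the stress work cancels), and exergy by forming energy minus $\vtheta_b$ times entropy. The only difference is cosmetic and lies in the thermal case. You keep the heat-flux boundary term $B$ explicitly in both identities and cancel it using $\vtheta=\vtheta_b$ on $\partial\Omega$, whereas the paper subtracts first and integrates by parts against the weight $1-\tfrac{\vtheta_b}{\vtheta}$, which vanishes on the boundary; this is the same computation.
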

            \begin{proof}
                For \labelcref{bc:periodic,bc:closed}, entropy production follows directly by first going from \eqref{eq:sys3alternative} to \eqref{eq:sys3div}, then integrating over the domain and lastly using integration by parts for the divergence term. For the total Energy we use equation \eqref{eq:sys3}, which is equivalent to \eqref{eq:sys3alternative}, in addition to \eqref{eq:sys4} and \eqref{eq:sys5} and calculate
                \begin{align}
                    \la\dt e_{\mathrm{tot}},1\ra &= \la\dt e+\dt\tfrac{1}{2}\snorm{\u}^2,1\ra =\la\dt e,1\ra + \la\dt\u,\u\ra\notag\\
                    &= -\la\u,\nabla e\ra - \la\div(\K\nabla\tfrac{1}{\vtheta}),1\ra + \la\eta\Du - \tilde\bsig,\nabla\u\ra + \la Q,1\ra\notag\\
                    &\quad- \la(\u\cdot\nabla)\u,\u\ra + \la\div(\eta\Du - \tilde p\mathbf{I} - \tilde\bsig),\u\ra + \la\b,u\ra. \label{eq:dtetot}
                \end{align}
                Here most terms cancel out using integration by parts and either of the two boundary conditions, together with \eqref{eq:sys5}, leaving only
                \begin{equation*}
                    \la\dt e_\mathrm{tot},1\ra = \la \dt e + \dt\tfrac{1}{2}\snorm{\u}^2,1 \ra = \la Q,1\ra + \la\b,u\ra.
                \end{equation*}
                In the case of \labelcref{bc:thermal} we simply subtract the integrated \eqref{eq:sys3alternative} multiplied by $\vtheta_{b}$ from \eqref{eq:dtetot} to get
                \begin{align*}
                    \la \dt(e_{\mathrm{tot}} - \vtheta_{b}s),1\ra &= -\la\nabla e-\vtheta_{b}\nabla s,\u\ra - \la\div(\K\nabla\tfrac{1}{\vtheta}),1-\tfrac{\vtheta_{b}}{\vtheta}\ra\\
                    &\quad + \la\eta\Du - \tilde\bsig,\nabla\u\ra + \la\div\left(G_{\nabla\phi}(\nabla\phi)\dt\phi + \bsig\cdot\u\right),\vtheta_{b}\ra\\
                    &\quad- \la(\u\cdot\nabla)\u,\u\ra + \la\div(\eta\Du - \tilde p\mathbf{I} - \tilde\bsig),\u\ra\\
                    &\quad- \la\M\left(\tfrac{\mu}{\vtheta}\right)^2 + \tfrac{\eta}{\vtheta}\snorm{\Du}^2,\vtheta_{b}\ra + \la Q,1-\tfrac{\vtheta_{b}}{\vtheta}\ra + \la\b,u\ra.
                \end{align*}
                Integration by parts and usage of the boundary conditions yields
                \begin{align*}
                    \la \dt e_{\mathrm{tot}} - \vtheta_{b}s,1\ra &= - \la\M\left(\tfrac{\mu}{\vtheta}\right)^2 + \K\left(\nabla\tfrac{1}{\vtheta}\right)^2 + \tfrac{\eta}{\vtheta}\snorm{\Du}^2,\vtheta_{b}\ra\\
                    &\quad - \la \K\nabla\tfrac{1}{\vtheta}\cdot\mathbf{n},1-\tfrac{\vtheta_{b}}{\vtheta}\ra_{\partial\Omega} + \la Q,1-\tfrac{\vtheta_{b}}{\vtheta}\ra + \la\b,u\ra ,
                \end{align*}
                hence the result follows as $1-\tfrac{\vtheta_{b}}{\vtheta}=0$  on the boundary.
            \end{proof}
            \begin{remark}
                Comparable properties arise when considering a full Onsager matrix, which permits cross-diffusion between the phase-field and temperature, as well as a symmetric, matrix-valued heat capacity $\K$. Cross-diffusion between the Cahn-Hilliard equation and temperature has been examined in \cite{Hoehn26}, where the cross-coupling may be matrix-valued. In the present context, the cross-coupling must be vector-valued, specifically by introducing $\mathbf{C}\cdot\nabla\tfrac{1}{\vtheta}$ in the Allen-Cahn equation and $-\div(\tfrac{\mu}{\vtheta}\mathbf{C})$ in the internal energy equation for $\mathbf{C}\in\mathbb{R}^d$.
            \end{remark}
            
            Linear momentum conservation can be verified directly under appropriate boundary conditions, whereas angular momentum conservation is considerably more complex. The balance of local angular momentum is reflected in the requirement that the stress tensor be symmetric. In contrast, global angular momentum conservation can be established under less restrictive conditions. 

\section{Variational formulation}\label{sec:var}
  
    The development of a numerical method begins with the introduction of the weak formulation of the problem, which is directly applicable to finite-element discretisation. Furthermore, energy conservation and entropy production are shown using appropriate test functions.
    \begin{lemma}\label{lem:varstruc}
        A smooth solution $(\phi,\mu,\vtheta,\u,p)$ to the NACNS system fulfills the variational formulation
        \begin{align}
            \la\dt\phi,\psi\ra + \la \u,\psi\nabla\phi \ra&= -\la\tfrac{\M}{\vtheta}\mu,\psi\ra,\label{eq:var1}\\
            \la\mu,\xi\ra&=\la G_{\nabla\phi}(\nabla\phi),\vtheta\nabla\xi+\xi\nabla \vtheta\ra+\la\dphi f,\xi\ra,\label{eq:var2}\\
            \la\dt s,\omega\ra - \la \u,s\nabla\omega \ra &= \la \tfrac{\M}{\vtheta}\mu,\tfrac{\omega}{\vtheta}\mu\ra + \la\tfrac{\K}{\vtheta^2}\nabla \vtheta,\tfrac{\omega}{\vtheta^2}\nabla \vtheta-\tfrac{1}{\vtheta}\nabla\omega\ra + \la \tfrac{\eta}{\vtheta}\snorm{\Du}^2,\omega \ra\notag \\
            &\quad +\la G_{\nabla\phi}(\nabla\phi)\dt\phi + \bsig\cdot\u,\nabla\omega\ra + \la Q,\tfrac{\omega}{\vtheta}\ra,\label{eq:var3}\\
            \la \dt\u,\vv \ra + \cskw(\u,\u,\vv) &= -\la \eta\Du,\mathrm{D}\vv \ra + \la p,\div(\vv) \ra + \la \b,\vv\ra \\
            &\quad-\la (\bsig+s\I)\nabla\vtheta-\mu\nabla\phi,\vv\ra, \label{eq:var4}\\
            0 &= \la \div(\u),q\ra,\label{eq:var5}
        \end{align}
        for smooth test functions $\psi,\xi,\omega,\vv,q$ with $p=\tilde p+F$. 
        \begin{itemize}
            \item Solutions to \eqref{eq:var1}--\eqref{eq:var5} using the \cref{bc:periodic,bc:closed} fulfil conservation of total energy as well as entropy production as in \cref{lem:sysstructure} by using only the test functions $\psi=\mu,~\xi=\dt\phi,~\omega\in\{1,\vtheta\},~\vv=\u$ and $q=p$. In the case of \labelcref{bc:closed} $\vv$ is compactly supported in space.
            \item Solutions to \eqref{eq:var1}--\eqref{eq:var5} using the \cref{bc:thermal} fulfill exergy dissipation as in \cref{lem:sysstructure} by using only the test functions $\psi=\mu,~\xi=\dt\phi,~\omega=\vtheta-\vtheta_{b},~\vv=\u$ and $q=p$. In this case, $\vv,\omega$ are compactly supported in space.
        \end{itemize}
    \end{lemma}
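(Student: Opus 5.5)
The proof has two parts. First we check that smooth solutions satisfy \eqref{eq:var1}--\eqref{eq:var5}. Then we recover the balances of \cref{lem:sysstructure} using only the listed test functions.

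\textbf{Derivation of the weak form.} Equations \eqref{eq:var1} and \eqref{eq:var5} follow by multiplying \eqref{eq:sys1} and \eqref{eq:sys5} by test functions and integrating, since $\M\tfrac{\mu}{\vtheta}=\tfrac{\M}{\vtheta}\mu$.

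For \eqref{eq:var2} we multiply \eqref{eq:sys2} by $\vtheta$, which gives $\mu=-\vtheta\div(G_{\nabla\phi})+\vtheta\partial_\phi(f/\vtheta)=-\vtheta\div(G_{\nabla\phi})+\dphi f$. We test with $\xi$ and integrate by parts using $\nabla(\vtheta\xi)=\vtheta\nabla\xi+\xi\nabla\vtheta$. The boundary term vanishes because of periodicity or $G_{\nabla\phi}\cdot\mathbf n=0$.

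For \eqref{eq:var3} we test \eqref{eq:sys3alternative} with $\omega$. Using $\div\u=0$ we rewrite $\la\u\cdot\nabla s,\omega\ra=-\la\u,s\nabla\omega\ra$. The heat term becomes $-\la\div(\K\nabla\tfrac1\vtheta),\tfrac\omega\vtheta\ra=\la\K\nabla\tfrac1\vtheta,\nabla\tfrac\omega\vtheta\ra$. With $\nabla\tfrac1\vtheta=-\vtheta^{-2}\nabla\vtheta$ this is exactly the stated term. The divergence term is integrated by parts. All boundary terms vanish under \ref{bc:periodic} or \ref{bc:closed}; under \ref{bc:thermal} they vanish because $\omega$ is compactly supported.

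For \eqref{eq:var4} the main work is rewriting $\div(\tilde p\I+\tilde\bsig)$ with $\tilde\bsig=\vtheta\bsig$ and $p=\tilde p+F$. We compute $\nabla F=\vtheta\nabla G+G\nabla\vtheta+\dphi f\nabla\phi+\dtheta f\nabla\vtheta$. We also have $\div(\vtheta\bsig)=\bsig\nabla\vtheta+\vtheta\div(G_{\nabla\phi})\nabla\phi+\vtheta(G_{\nabla\phi}\cdot\nabla)\nabla\phi$, using the identity from \cref{sec:refo}. The terms $\vtheta\nabla G$ and $\vtheta(G_{\nabla\phi}\cdot\nabla)\nabla\phi$ cancel. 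The relation $\mu=-\vtheta\div G_{\nabla\phi}+\dphi f$ together with $s=-G-\dtheta f$ then gives
\[
\div(\tilde p\I+\tilde\bsig)=\nabla p+(\bsig+s\I)\nabla\vtheta-\mu\nabla\phi .
\]
The index convention for $\bsig\nabla\vtheta$ has to be checked against the column-wise divergence; this is the fiddliest point. Finally, the skew form $\cskw$ agrees with $c$ because $\div\u=0$ and $\u$ vanishes on the boundary (or is periodic).

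\textbf{Structure.} We take $\psi=\mu$, $\xi=\dt\phi$, $\vv=\u$, $q=p$. Then \eqref{eq:var1} gives $\la\dt\phi,\mu\ra=-\la\u,\mu\nabla\phi\ra-\la\tfrac\M\vtheta\mu,\mu\ra$. In \eqref{eq:var4} we have $\cskw(\u,\u,\u)=0$, and the pressure term drops by \eqref{eq:var5}.

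Entropy production comes from \eqref{eq:var3} with $\omega=1$. The middle terms vanish, and what remains is $\mathcal D$ plus $\la Q,\tfrac1\vtheta\ra$, using $\tfrac{\K}{\vtheta^4}|\nabla\vtheta|^2=\K|\nabla\tfrac1\vtheta|^2$.

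For the energy we use $\omega=\vtheta$ and show that $\la\dt e,1\ra=\la\dt F,1\ra+\la\dt(\vtheta s),1\ra$ equals the sum of the tested equations. Concretely, $\la\dt F,1\ra$ is expanded by the chain rule and then replaced using \eqref{eq:var2} with $\xi=\dt\phi$, which produces $\la\mu,\dt\phi\ra$ plus the terms $\la G_{\nabla\phi}\dt\phi,\nabla\vtheta\ra$ and $-\la s,\dt\vtheta\ra$. Also $\la\dt(\vtheta s),1\ra=\la\dt s,\vtheta\ra+\la s,\dt\vtheta\ra$. With $\omega=\vtheta$ the heat term in \eqref{eq:var3} vanishes identically, because $\tfrac{\vtheta}{\vtheta^2}\nabla\vtheta-\tfrac1\vtheta\nabla\vtheta=0$. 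The dissipation terms then cancel against $-\la\tfrac\M\vtheta\mu,\mu\ra$ and $-\la\eta\Du,\Du\ra$. The convective terms $\la\u,s\nabla\vtheta\ra$, $\la\bsig\cdot\u,\nabla\vtheta\ra$ and $\la\u,\mu\nabla\phi\ra$ cancel against the Korteweg term in \eqref{eq:var4}, which is exactly why the reformulation above was chosen. Finally $\la G_{\nabla\phi}\dt\phi,\nabla\vtheta\ra$ cancels against the corresponding term from \eqref{eq:var2}.

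For \ref{bc:thermal} we take $\omega=\vtheta-\vtheta_b$, which vanishes on the boundary. Since $\vtheta_b$ is constant, this is the energy identity minus $\vtheta_b$ times the $\omega=1$ identity, which is the exergy balance. The main obstacle is this bookkeeping of cancellations, especially matching the Korteweg terms consistently.
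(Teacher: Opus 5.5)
Your proposal is correct and is essentially the paper's proof. It uses the same integrations by parts, with $\div\u=0$ for the convective entropy term and the skew form, and the same rewriting of the Korteweg force through $p=\tilde p+F$, $s=-G-\dtheta f$ and \eqref{eq:sys2}; you expand $\nabla F$ first, whereas the paper reassembles it by the inverse product rule and absorbs it into the pressure. The energy bookkeeping from $e=F+\vtheta s$ with $\psi=\mu$, $\xi=\dt\phi$, $\omega=\vtheta$ (resp.\ $\vtheta-\vtheta_b$), $\vv=\u$, $q=p$ is also the same.

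On the point you flag: the column-wise divergence actually yields $\bsig^\top\nabla\vtheta$, which is exactly what cancels $\la\bsig\u,\nabla\vtheta\ra$ from \eqref{eq:var3} with $\omega=\vtheta$. The paper's $\bsig\nabla\vtheta$ coincides with this when $\bsig$ is symmetric, e.g.\ for isotropic $G$. In case \labelcref{bc:thermal}, your ``energy identity minus $\vtheta_b$ times the $\omega=1$ identity'' should be read as linearity in $\omega$ of the right-hand side of \eqref{eq:var3}, since only $\omega=\vtheta-\vtheta_b$ itself is an admissible test function there.
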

    \begin{proof}
    \textbf{Variational form:}
        Equation \eqref{eq:var1} and \eqref{eq:var5} follow directly from \eqref{eq:sys1} and \eqref{eq:sys5} by multiplying with $\psi$ and $q$ respectively and integrating over the domain $\Omega$. For the other equations we also multiply \eqref{eq:sys2}, \eqref{eq:sys3alternative} and \eqref{eq:sys4} by the functions $\vtheta\xi, \omega$ and $\vv$ respectively, and integrate over the domain $\Omega$ to obtain
        \begin{align*}
            \la\mu,\xi\ra&=-\la\div(G_{\nabla\phi}(\nabla\phi)),\vtheta\xi\ra+\la\dphi f,\xi\ra,\\
            \la\dt s,\omega\ra+\la\u,\omega\nabla s\ra&=\la\M\left(\tfrac{\mu}{\vtheta}\right)^2,\omega\ra - \la\div(\K\nabla\tfrac{1}{\vtheta}),\tfrac{\omega}{\vtheta}\ra + \la \tfrac{\eta}{\vtheta}\snorm{\Du}^2,\omega\ra \notag\\
            &\quad- \la\div\left(G_{\nabla\phi}(\nabla\phi)\dt\phi + \bsig\cdot\u\right),\omega\ra + \la Q,\tfrac{\omega}{\vtheta}\ra,\\
            \la\dt\u,\vv\ra+\la(\u\cdot\nabla)\u,\vv\ra &= \la\div(\eta\Du - \tilde p\mathbf{I} - \tilde\bsig),\vv\ra + \la\b,\vv\ra.
        \end{align*}
        For all the divergence terms except the Korteweg stress, we use partial integration together with the product rule to derive
        \begin{align*}
            \la\mu,\xi\ra&=\la G_{\nabla\phi}(\nabla\phi),\vtheta\nabla\xi+\xi\nabla\vtheta\ra+\la\dphi f,\xi\ra,\\
            \la\dt s,\omega\ra+\la\u,\omega\nabla s\ra &= \la\tfrac{\M}{\vtheta}\mu,\tfrac{\omega}{\vtheta}\mu\ra + \la\tfrac{\K}{\vtheta^2}\nabla \vtheta,\tfrac{\omega}{\vtheta^2}\nabla\vtheta-\tfrac{1}{\vtheta}\nabla\omega\ra + \la \tfrac{\eta}{\vtheta}\snorm{\Du}^2,\omega\ra\notag\\
            &\quad + \la G_{\nabla\phi}(\nabla\phi)\dt\phi + \bsig\cdot\u,\nabla\omega\ra + \la Q,\tfrac{\omega}{\vtheta}\ra,\\
            \la\dt\u,\vv\ra+\la(\u\cdot\nabla)\u,\vv\ra &= - \la\eta\Du,\Dv\ra + \la \tilde p ,\div(v)\ra 
            -\la\div(\tilde\bsig),\vv\ra + \la\b,\vv\ra.
        \end{align*}
        where the boundary terms vanish in all cases. While the equation for $\mu$ is already in weak formulation, for the entropy we use \eqref{eq:sys5} and calculate
        \begin{align*}
            \la\u,\omega\nabla s\ra&=\la\u,\nabla(\omega s)\ra-\la\u,s\nabla\omega\ra=-\la\div(\u),\omega s\ra-\la\u,s\nabla\omega\ra = -\la\u,s\nabla\omega\ra,
        \end{align*}
        to get \eqref{eq:var3}. Lastly \eqref{eq:var4} can be obtained by using the skew-symmetric formulation \eqref{eq:skew} for
        \begin{align*}
             \la(\u\cdot\nabla)\u,\vv\ra &= \tfrac{1}{2}\la(\u\cdot\nabla)\u,\vv\ra - \tfrac{1}{2}\la(\u\cdot\nabla)\vv,\u\ra - \la \div(\u),\u\cdot\vv \ra\\
             &=\tfrac{1}{2}c(\u,\u,\vv)-\tfrac{1}{2}c(\u,\vv,\u)= \cskw(\u,\u,\vv),
        \end{align*}
        as well as calculating
        \begin{align*}
            \la \div(\tilde \bsig),\vv \ra &= \la \div(\vtheta\bsig),\vv \ra = \la\vtheta\div(\bsig),\vv\ra + \la\bsig\nabla\vtheta,\vv\ra\\
            &= \la\vtheta\div(G_{\nabla\phi}(\nabla\phi))\nabla\phi,\vv\ra + \la\vtheta(G_{\nabla\phi}(\nabla\phi)\cdot\nabla)\nabla\phi,\vv\ra + \la\bsig\nabla\vtheta,\vv\ra.\\
            \intertext{Here we substitute in \eqref{eq:sys2} to get}
            \la \div(\tilde \bsig),\vv \ra &= -\la\mu\nabla\phi,\vv\ra + \la\dphi f\nabla\phi,\vv\ra + \la\vtheta\nabla G(\nabla\phi),\vv\ra + \la\bsig\nabla\vtheta,\vv\ra,\\
            \intertext{proceed to use the inverse product rule to derive}
            \la \div(\tilde \bsig),\vv \ra &= - \la\mu\nabla\phi,\vv\ra + \la\nabla f,\vv\ra - \la\dtheta f\nabla\vtheta,\vv\ra\\
            &\quad+ \la\nabla(\vtheta G(\nabla\phi)),\vv\ra - \la G(\nabla\phi)\nabla\vtheta,\vv\ra + \la\bsig\nabla\vtheta,\vv\ra\\
            &= \la\bsig\nabla\vtheta,\vv\ra - \la(G(\nabla\phi)+\dtheta f)\nabla\vtheta,\vv\ra\\
            &\quad - \la\mu\nabla\phi,\vv\ra + \la\nabla(\vtheta G(\nabla\phi)+f),\vv\ra\\
            \intertext{and finally use \eqref{eq:entropy} and \eqref{eq:helmholtz} followed by partial integration to reveal}
            \la \div(\tilde \bsig),\vv \ra &= \la\bsig\nabla\vtheta,\vv\ra + \la s\nabla\vtheta,\vv\ra - \la\mu\nabla\phi,\vv\ra + \la\nabla F,\vv\ra\\
            &= \la (\bsig+s\I)\nabla\vtheta,\vv \ra - \la \mu\nabla\phi,\vv \ra - \la F, \div(\vv)\ra,        \end{align*}
        again valid for all boundary conditions.
        Then, substituting these two terms into the third equation from above completes the proof of the first part of the lemma.
        
        \textbf{Simple test functions:}
        To show the second part, we assume $(\phi,\mu,\vtheta,\u,p)$ to be a solution of the variational form and start for \cref{bc:periodic,bc:closed} by testing \eqref{eq:var3} with $\omega=1$ to see:
        \begin{align*}
            \la\dt s,1\ra &= \la \u,s\nabla1 \ra + \la \tfrac{\M}{\vtheta}\mu,\tfrac{1}{\vtheta}\mu\ra + \la\tfrac{\K}{\vtheta^2}\nabla \vtheta,\tfrac{1}{\vtheta^2}\nabla \vtheta-\tfrac{1}{\vtheta}\nabla1\ra + \la \tfrac{\eta}{\vtheta}\snorm{\Du}^2,1 \ra\notag \\
            &\quad +\la G_{\nabla\phi}(\nabla\phi)\dt\phi + \bsig\cdot\u,\nabla1\ra + \la Q,\tfrac{1}{\vtheta}\ra\\
            &= \la\M\tfrac{\mu}{\vtheta},\tfrac{\mu}{\vtheta}\ra + \la\K\nabla\tfrac{1}{\vtheta},\nabla\tfrac{1}{\vtheta}\ra + \la\tfrac{\eta}{\vtheta}\Du,\Du\ra + \la Q,\tfrac{1}{\vtheta}\ra\notag\\
            &= \mathcal{D}\left(\tfrac{\mu}{\vtheta},\tfrac{1}{\vtheta},\u\right) + \la Q,\tfrac{1}{\vtheta}\ra.
        \end{align*}
        For the total energy we use the identities $e=F+\vtheta s$, $s=-\dtheta F$, cf. \eqref{eq:entropy},\eqref{eq:energy}, and \eqref{eq:helmholtz} as well as the reverse product rule to compute 
        \begin{align} \label{eq:vardtetot}
            \la \dt e_{\textrm{tot}},1\ra  &=\la\dphi F,\dt\phi\ra + \la\dtheta F,\dt \vtheta\ra + \la\dt\vtheta, s\ra  + \la\dt s,\vtheta\ra + \la\dt\u,\u\ra \notag\\
            &= \la G_{\nabla\phi}(\nabla\phi),\vtheta \nabla\dt\phi\ra + \la\dphi f, \dt \phi\ra + \la\dt s,\vtheta\ra + \la\dt\u,\u\ra.
        \end{align}
        For \labelcref{bc:periodic,bc:closed}, substituting in equation \eqref{eq:var2}, as well as equations \eqref{eq:var3} and \eqref{eq:var4}, tested with $\xi=\dt\phi$, $\omega=\vtheta$ and $\vv=\u$ respectively, shows
        \begin{align*}
            \la \dt e_{\textrm{tot}},1\ra&= \la\mu,\dt\phi\ra +\la \tfrac{\M}{\vtheta}\mu,\mu\ra + \la Q,1\ra + \la p,\div(\u)\ra + \la\b,\u\ra + \la\mu\nabla\phi,\u\ra, \notag
            \intertext{now substituting in equation \eqref{eq:var1} and \eqref{eq:var5}, tested with $\psi=\mu$ and $q=p$ leads to}
            \la \dt e_{\textrm{tot}},1\ra&= \la Q,1\ra + \la\b,\u\ra.
        \end{align*}
        In the case of \labelcref{bc:thermal} we consider \eqref{eq:vardtetot} and subtract $\la \vtheta_{b},\dt s \ra$ from both sides to obtain:
        \begin{align*}
            \la \dt \left(e_\mathrm{tot} - \vtheta_{b} s\right),1\ra  &= \la G_{\nabla\phi}(\nabla\phi),\vtheta \nabla\dt\phi\ra + \la\dphi f, \dt \phi\ra + \la\dt s,\vtheta-\vtheta_{b}\ra + \la\dt\u,\u\ra.
        \end{align*}
        By construction, $\vtheta-\vtheta_{b}$ is zero on the boundary, hence a valid test function. The rest follows from insertion of the mentioned test function and similar cancellations.
    \end{proof}
    
\section{Discrete scheme}\label{sec:dis}

    This section presents a fully discrete scheme for the NACNS system based on the variational form \eqref{eq:var1}--\eqref{eq:var5}. The scheme is shown to satisfy the entropy production and total energy dissipation conditions, and the existence of discrete solutions with positive temperature is established.
    
    As a preparatory step, the relevant notation and assumptions for the discretisation strategy are introduced.
    
    \subsection*{Time discretisation}

        The time interval $[0,T]$ is partitioned into uniformly divided sub-intervals with step size $\tau>0$, and $\Itau:=\{0=t^0,t^1=\tau,\ldots, t^{n_T}=T\}$ is introduced, where $n_T=\tfrac{T}{\tau}$ denotes the total number of time steps. The spaces $\Pi^1_c(\Itau;X)$ and $\Pi^0(\Itau;X)$ represent continuous piecewise linear and piecewise constant functions on $\Itau$, respectively, with values in a suitable Hilbert space $X$. By $g^{n+1},g^n,g^{n+1/2},g^*$ we denote the evaluation of $g\in \Pi^1_c(\Itau;X)$ or $\Pi^0(\Itau;X)$ at $t=\{t^{n+1},t^n,t^{n+1/2},t^*\}$, where $t^{n+1/2}=\tfrac{t^n+t^{n+1}}{2}$ and $t^*\in[t^n,t^{n+1}]$ is some intermediate time. Further, the time difference and the discrete time derivative are defined via
        \begin{equation*}
            d^{n+1}g = g^{n+1} - g^n, \qquad d^{n+1}_\tau g = \frac{g^{n+1}-g^n}{\tau}.
        \end{equation*}
        All notations are also extended to nonlinear functions $Y$ of $g$, i.e. $d^{n+1}Y=Y^{n+1}-Y^n=Y(g^{n+1}) - Y(g^n)$ and $Y^*=Y(g^*)$ and for the external forces $Q,\b$ we set $Q^*:=Q(x,t^*),\b^*:=\b(x,t^*)$.

    \subsection*{Space discretisation}

        A geometrically conforming partition $\Th$ of the domain $\Omega$ into simplices of maximal diameter $h$ is considered, where the spaces of continuous, piecewise linear and continuous, piecewise quadratic functions over $\Th$, as well as the subspaces containing only mean-free functions of the first type, are denoted as follows.
        \begin{align*}
            \Vh &:= \{v \in H^1(\Omega)\cap C^0(\bar\Omega) : v|_K \in P_1(K),~\forall K \in \Th\},\\
            \Xh &:= \{v \in H^1(\Omega)^d\cap C^0(\bar\Omega)^d : v|_K \in P_2(K)^d,~ \forall K \in \Th\}, \\
            \Qh &:= \{v \in \Vh : \la v, 1\ra=0\}.
        \end{align*}
        To accommodate for the different boundary conditions we then use the following approximation spaces.
        \begin{enumerate}
            \item In case of periodic \cref{bc:periodic} we choose 
            \begin{align*}
                \Vh^0 = \Vh^b = \Vh,\quad\Xh^b &:= \Xh
            \end{align*}
            and assume that $\Th$ can be extended periodically to periodic extensions of $\Omega$.
            \item In case of closed \cref{bc:closed} we choose
            \begin{align*}
                \Vh^0 = \Vh^b = \Vh,\quad\Xh^{b} = \Xh\cap \{v\vert_{\partial\Omega} = 0\}.
            \end{align*}
            \item In case of thermal \cref{bc:thermal} we choose
            \begin{align*}
                 \Vh^0 = \Vh \cap \{v\vert_{\partial\Omega} = 0\}, \quad \Vh^b = \Vh \cap \{v\vert_{\partial\Omega} = \vtheta_{b}\},\quad\Xh^{b} = \Xh\cap \{v\vert_{\partial\Omega} = 0\}.
            \end{align*}
        \end{enumerate}
        The notation $Y_h^*$ extends the notation from above to evaluations with discrete functions, i.e. $Y_h^*=Y(g_h^*)$. Lastly the time-average of $\dphi f$ is defined as
        \begin{equation}\label{eq:fphi}
            \partial_\phi f(\phi_h^{n+1},\phi_h^n,\vtheta_h^{n+1}):= \frac{1}{\tau}\int_{t^n}^{t^{n+1}} \partial_\phi f\left(\phi_h^{n}+\frac{s-t^{n}}{\tau}(\phi_h^{n+1}-\phi_h^n),\vtheta_h^{n+1}\right)\ds,
        \end{equation}
        for $\phi_h,\vtheta_h\in\Pi^1_c(\Itau;\Vh)$. With these preliminaries, the following fully discrete system is proposed.\clearpage
        \begin{problem}\label{prob:nacns}
            Given the initial data $(\phi_{h,0},\u_{h,0},\vtheta_{h,0})\in \Vh\times\Xh^b\times\Vh^b$ for the NACNS system, find $(\phi_h,\u_h,\vtheta_h)\in \Pi^1_c(\Itau;\Vh\times\Xh^b\times\Vh^b)$ and $\mu_{h},p_h\in \Pi^0(\Itau;\Vh\times\Qh)$ such that
            \begin{align}
                \la\dtau\phi_h,\psi_h\ra&=-\la\u_h^{n+1},\psi_h\nabla\phi_h^*\ra-\la\tfrac{\M_h^*}{\vtheta_h^*}\mu_h^{n+1},\psi_h\ra,\label{eq:dis1}\\
                \la\mu_h^{n+1},\xi_h\ra&=\la G_{\nabla\phi}(\nabla\phi_h^{n+1}),\vtheta_h^{n+1}\nabla\xi_h\ra+\la G_{\nabla\phi}(\nabla\phi_h^*),\xi_h\nabla \vtheta_h^{n+1}\ra\notag\\
                &\quad+ \la \partial_\phi f(\phi_h^{n+1},\phi_h^n,\vtheta_h^{n+1}),\xi_h\ra,\label{eq:dis2}\\
                \la\dtau s_h,\omega_h\ra&=\la\tfrac{\M_h^*}{\vtheta_h^*}\mu_h^{n+1},\tfrac{\omega_h}{\vtheta_h^{n+1}}\mu_h^{n+1}\ra+\la\tfrac{\eta_h^*}{\vtheta_h^{n+1}}\snorm{\Du_h^{n+1}}^2,\omega_h\ra\notag\\
                &\quad+\la\tfrac{\K_h^*}{(\vtheta_h^*)^2}\nabla \vtheta_h^{n+1},\tfrac{\omega_h}{\vtheta_h^{n+1}\cdot \vtheta_h^*}\nabla \vtheta_h^{n+1}-\tfrac{1}{\vtheta_h^*}\nabla\omega_h\ra + \la Q^*,\tfrac{\omega_h}{\vtheta_h^{n+1}}\ra \notag\\
                &\quad+\la G_{\nabla\phi}(\nabla\phi_h^*)\dtau\phi_h + (\bsig_h^*+s_h^*\mathbf{I})\u_h^{n+1},\nabla\omega_h\ra,\label{eq:dis3}\\
                \la\dtau\u,\vv_h\ra&=-~\cskw(\u_h^*,\u_h^{n+1},\vv_h)-\la\eta_h^*\Du_h^{n+1},\Dv_h\ra+\la p_{h}^{n+1},\div(\vv_h)\ra\notag\\
                &\quad-\la(\bsig_h^*+s_h^*\mathbf{I})\nabla\vtheta_h^{n+1}-\mu_h^{n+1}\nabla\phi_h^*,\vv_h\ra + \la\b^*,\vv_h\ra,\label{eq:dis4}\\
                0&=\la\div(\u_h^{n+1}),q_h\ra,\label{eq:dis5}
            \end{align}
            holds for $(\psi_h,\xi_h,w_h,\vv_h,q_h)\in\Vh\times\Vh\times\Vh^0\times \Xh^b\times\Qh$ and all $0\leq n\leq n_T-1$.
        \end{problem}
        \begin{theorem}\label{thm:discstruc}
             A discrete solution $(\phi_h,\mu_{h},\vtheta_h,\u_h,p_h)$ with positive temperature $\vtheta_h$ of \cref{prob:nacns} has the following structure preserving properties
            \begin{enumerate}
                \item In case of \cref{bc:periodic,bc:closed} 
                \begin{align*}
                    \la \dtau s_h,1\ra &=  \mathcal{D}_h^{n+1} + \la Q^*,\tfrac{1}{\vtheta_h^{n+1}}\ra,\\
                    \la \dtau e_{\mathrm{tot},h},1\ra &=  \la \dtau (e_h+\tfrac{1}{2}\snorm{\u_h}^2),1\ra = \mathcal{D}_\mathrm{num}^{n+1} + \la Q^*,1\ra + \la\b^*,\u_h^{n+1}\ra.
                \end{align*}
                \item In case of \cref{bc:thermal}
                \begin{align*}
                    \la \dtau (e_{\mathrm{tot},h}-\vtheta_{b}s_h),1\ra &= - \vtheta_{b}\mathcal{D}_h^{n+1} + \mathcal{D}_\mathrm{num}^{n+1} + \la\b^*,\u_h^{n+1}\ra + \la Q^*,1-\tfrac{\vtheta_{b}}{\vtheta_h^{n+1}}\ra. 
                \end{align*}
            \end{enumerate}
            with the entropy production
            \begin{equation*}
                \mathcal{D}_h^{n+1}=\la\tfrac{\M_h^*}{\vtheta_h^{n+1}\cdot \vtheta_h^*}\snorm{\mu_h^{n+1}}^2 + \tfrac{\K_h^*}{\vtheta_h^{n+1}\cdot(\vtheta_h^*)^3}\snorm{\nabla \vtheta_h^{n+1}}^2 + \tfrac{\eta_h^*}{\vtheta_h^{n+1}}\snorm{\Du_h^{n+1}}^2,1\ra \geq 0
            \end{equation*}
            and the numerical dissipation 
            \begin{align*}
                \mathcal{D}_\mathrm{num}^{n+1} = \tfrac{\tau}{2}&\left(\la\partial_{\vtheta\vtheta}f(\phi_h^{n},\zeta_1^{n+1}),(\dtau\vtheta_h)^2\ra - \la\snorm{\dtau\u_h}^2,1\ra\right.\\
                &~\left. - \la G_{\nabla\phi\nabla\phi}(\nabla\zeta_2^{n+1})\nabla\dtau\phi_h,\vtheta_h^{n+1}\nabla\dtau\phi_h\ra\right) \leq 0,
            \end{align*}
            for some $\zeta_1^{n+1}\in[\vtheta_h^{n},\vtheta_h^{n+1}],\zeta_2^{n+1}\in[\phi_h^{n},\phi_h^{n+1}]$.
        \end{theorem}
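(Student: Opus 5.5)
We mimic the continuous argument of \cref{lem:varstruc} at the discrete level, using test functions that lie in the discrete spaces. The entropy identity comes from testing \eqref{eq:dis3} with $\omega_h=1$; this is admissible for \labelcref{bc:periodic,bc:closed} since then $\Vh^0=\Vh$. All gradient terms of $\omega_h$ vanish. What remains is $\la\tfrac{\M_h^*}{\vtheta_h^*\vtheta_h^{n+1}},|\mu_h^{n+1}|^2\ra$, the viscous term, the heat term $\la\tfrac{\K_h^*}{(\vtheta_h^*)^3\vtheta_h^{n+1}}|\nabla\vtheta_h^{n+1}|^2,1\ra$ and the source term. Together these give exactly $\mathcal{D}_h^{n+1}+\la Q^*,1/\vtheta_h^{n+1}\ra$. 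Nonnegativity of $\mathcal{D}_h^{n+1}$ follows from positivity of $\vtheta_h$ and the assumptions on $\M,\K,\eta$.

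For the total energy we start from $e=f-\vtheta\dtheta f$ and $s=-G-\dtheta f$, which give $e=\vtheta s+\vtheta G+f$. We then expand the difference quotient with the discrete product rule,
\begin{align*}
d^{n+1}e_h = \vtheta_h^{n+1}d^{n+1}s_h + s_h^n d^{n+1}\vtheta_h + d^{n+1}(\vtheta_h G(\nabla\phi_h)) + d^{n+1}f_h .
\end{align*}
The convenient splitting is
\begin{align*}
d^{n+1}f_h = \big[f(\phi_h^{n+1},\vtheta_h^{n+1})-f(\phi_h^n,\vtheta_h^{n+1})\big] + \big[f(\phi_h^n,\vtheta_h^{n+1})-f(\phi_h^n,\vtheta_h^n)\big].
\end{align*}
The first bracket equals $\tau\,\partial_\phi f(\phi_h^{n+1},\phi_h^n,\vtheta_h^{n+1})\dtau\phi_h$ exactly, by \eqref{eq:fphi}. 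For the second bracket we Taylor expand around $\vtheta_h^{n+1}$. This yields $\dtheta f(\phi_h^n,\vtheta_h^{n+1})d^{n+1}\vtheta_h-\tfrac12\partial_{\vtheta\vtheta}f(\phi_h^n,\zeta_1)(d^{n+1}\vtheta_h)^2$.

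Similarly, $d^{n+1}(\vtheta_h G)=\vtheta_h^{n+1}d^{n+1}G+G^n d^{n+1}\vtheta_h$. Expanding $G$ around $\nabla\phi_h^{n+1}$ gives $\vtheta_h^{n+1}G_{\nabla\phi}(\nabla\phi_h^{n+1})\cdot\nabla d^{n+1}\phi_h$ minus the Hessian remainder with $\zeta_2$. The terms multiplying $d^{n+1}\vtheta_h$ then combine to $s_h^n+G^n+\dtheta f(\phi_h^n,\vtheta_h^{n+1})$. This equals $\dtheta f(\phi_h^n,\vtheta_h^{n+1})-\dtheta f(\phi_h^n,\vtheta_h^n)$, and that difference must in turn be absorbed into a single second-order term. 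This bookkeeping is the step I expect to be delicate. I would try instead expanding $f(\phi_h^n,\cdot)$ around $\vtheta_h^{n}$, or rewriting $s_h^n$ directly, so that only one $\tfrac\tau2\partial_{\vtheta\vtheta}f(\phi_h^n,\zeta_1)(\dtau\vtheta_h)^2$ survives, with the sign matching the statement. For $\tfrac12|\u|^2$ we use $d^{n+1}\tfrac12|\u_h|^2=\u_h^{n+1}\cdot d^{n+1}\u_h-\tfrac12|d^{n+1}\u_h|^2$.

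Next we insert the scheme, which requires that the test functions are admissible. In \eqref{eq:dis3} we take $\omega_h=\vtheta_h^{n+1}\in\Vh$; in the thermal case we take $\vtheta_h^{n+1}-\vtheta_b\in\Vh^0$ and treat the $\vtheta_b$-part via the entropy identity, exactly as in \cref{lem:varstruc}. We take $\xi_h=\dtau\phi_h$ in \eqref{eq:dis2}, $\psi_h=\mu_h^{n+1}$ in \eqref{eq:dis1}, $\vv_h=\u_h^{n+1}$ in \eqref{eq:dis4} and $q_h=p_h^{n+1}$ in \eqref{eq:dis5}; this last choice requires noting that $p_h^{n+1}\in\Qh$.

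The cancellations then go as follows. The choice of mixed time levels in the scheme makes the dissipative terms from \eqref{eq:dis3} tested with $\vtheta_h^{n+1}$ cancel. Their form is $\tfrac{\M_h^*}{\vtheta_h^*}|\mu|^2$, which cancels against \eqref{eq:dis1}. The viscous term cancels against \eqref{eq:dis4}. The heat-flux pieces combine into $\la\tfrac{\K_h^*}{(\vtheta_h^*)^3}\nabla\vtheta_h^{n+1},\nabla\vtheta_h^{n+1}\ra-\la\tfrac{\K_h^*}{(\vtheta_h^*)^3}\nabla\vtheta_h^{n+1},\nabla\vtheta_h^{n+1}\ra=0$. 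The Korteweg and entropy-flux terms $\la(\bsig_h^*+s_h^*\I)\u_h^{n+1},\nabla\vtheta_h^{n+1}\ra$ cancel against the corresponding term in \eqref{eq:dis4}. The term $\la G_{\nabla\phi}(\nabla\phi_h^*)\dtau\phi_h,\nabla\vtheta_h^{n+1}\ra$ cancels against the second term of \eqref{eq:dis2}. The convective terms vanish by the skew-symmetry $\cskw(\u_h^*,\u_h^{n+1},\u_h^{n+1})=0$, and the $\mu\nabla\phi^*$ transport terms in \eqref{eq:dis1} and \eqref{eq:dis4} cancel.

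What is left is the forcing $\la Q^*,1\ra+\la\b^*,\u_h^{n+1}\ra$ together with the three remainders, which form $\mathcal{D}_\mathrm{num}^{n+1}$. Its sign $\leq0$ follows from concavity of $f$ in $\vtheta$ (\cref{as:pot}), convexity of $G$ (\cref{as:grad}) and $\vtheta_h^{n+1}>0$. In the thermal case, subtracting $\vtheta_b$ times the entropy identity in the same way produces the exergy balance, including the term $\la Q^*,1-\vtheta_b/\vtheta_h^{n+1}\ra$.
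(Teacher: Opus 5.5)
Your proposal is correct and follows the paper's proof essentially step by step. It uses the same discrete product rule $d^{n+1}(\vtheta_h s_h)=\vtheta_h^{n+1}d^{n+1}s_h+s_h^n\,d^{n+1}\vtheta_h$, the same exact cancellation of the averaged $\partial_\phi f$ from \eqref{eq:fphi}, and the same test functions and cancellations. Your fallback of Taylor-expanding $f(\phi_h^n,\cdot)$ about $\vtheta_h^n$ is exactly what the paper does: the linear term $\partial_\vtheta f(\phi_h^n,\vtheta_h^n)\,d^{n+1}\vtheta_h$ then cancels $(s_h^n+G(\nabla\phi_h^n))\,d^{n+1}\vtheta_h$ identically, and only $\tfrac{\tau}{2}\la\partial_{\vtheta\vtheta}f(\phi_h^n,\zeta_1^{n+1}),(\dtau\vtheta_h)^2\ra$ survives.

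One wording fix concerns the thermal case. There $1\notin\Vh^0$, so you cannot invoke the entropy identity. The $-\vtheta_b$ terms must instead come from evaluating the right-hand side of \eqref{eq:dis3} at the single admissible test function $\omega_h=\vtheta_h^{n+1}-\vtheta_b$ and splitting it by linearity in $\omega_h$, noting that $\nabla\omega_h=\nabla\vtheta_h^{n+1}$. This is how the paper proceeds after subtracting $\vtheta_b\la\dtau s_h,1\ra$ algebraically.
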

        \begin{proof}
            For the entropy production balance with \labelcref{bc:periodic,bc:closed} the choice $\omega=1$ yields 
            \begin{align*}
                \la\dtau s_h,1\ra&=\la\tfrac{\M_h^*}{\vtheta_h^*}\mu_h^{n+1},\tfrac{1}{\vtheta_h^{n+1}}\mu_h^{n+1}\ra+\la\tfrac{\eta_h^*}{\vtheta_h^{n+1}}\snorm{\Du_h^{n+1}}^2,1\ra\\
                &\quad + \la\tfrac{\K_h^*}{(\vtheta_h^*)^2}\nabla \vtheta_h^{n+1},\tfrac{1}{\vtheta_h^{n+1}\cdot \vtheta_h^*}\nabla \vtheta_h^{n+1}\ra + \la Q^*,\tfrac{1}{\vtheta_h^{n+1}}\ra\\
                &=\mathcal{D}_h^{n+1}  + \la Q^*,\tfrac{1}{\vtheta_h^{n+1}}\ra.
            \end{align*}
            To get the equation for the total energy as well as the exergy, we first use the identity from above for $e$, cf. \eqref{eq:energy}, and calculate as follows.
            \begin{align}\label{eq:disdtetot}
                \la\dtau e_{\mathrm{tot},h},1\ra&=\la\tfrac{1}{\tau}(F_h^{n+1}+\vtheta_h^{n+1}s_h^{n+1}-F_h^n-\vtheta_h^{n}s_h^{n})+\tfrac{1}{2\tau}(\snorm{\u^{n+1}_h}^2-\snorm{\u^{n}_h}^2),1\ra\notag\\
                &=\la\dtau f,1\ra+\la\tfrac{ \vtheta_h^{n+1}}{\tau}G(\nabla\phi_h^{n+1})-\tfrac{\vtheta_h^{n}}{\tau}G(\nabla\phi_h^n),1\ra\notag\\
                &\quad+\la\dtau s_h,\vtheta_h^{n+1}\ra+\la s_h^n,\dtau\vtheta_h\ra\notag\\
                &\quad+\la\dtau \u_h,\u_h^{n+1}\ra-\tfrac{\tau}{2}\la\snorm{\dtau\u_h}^2,1\ra.\notag\\
                \intertext{Then using $s=-\dtheta F=-(\dtheta f+G(\nabla\phi))$, cf. \eqref{eq:entropy}, we find}
                \la\dtau e_{\mathrm{tot},h},1\ra&=\la\dtau f,1\ra - \la\dtheta f_h^n,\dtau\vtheta_h\ra + \la \dtau G(\nabla\phi_h),\vtheta_h^{n+1}\ra\notag\\
                &\quad + \la\dtau s_h,\vtheta_h^{n+1}\ra + \la\dtau \u_h,\u_h^{n+1}\ra - \tfrac{\tau}{2}\la\snorm{\dtau\u_h}^2,1\ra.
            \end{align}
            Now for \labelcref{bc:periodic,bc:closed} we use \eqref{eq:dis3} with $\omega_h=\vtheta_h^{n+1}$, where the first term cancels out to get
            \begin{align*}
                \la\dtau e_{\mathrm{tot},h},1\ra&=\la\dtau f,1\ra - \la\dtheta f_h^{n},\dtau \vtheta_h\ra - \tfrac{\tau}{2}\la\snorm{\dtau\u_h}^2,1\ra\\
                &\quad+\la\tfrac{\M_h^*}{\vtheta_h^*}\mu_h^{n+1},\mu_h^{n+1}\ra + \la\eta_h^*\snorm{\Du_h^{n+1}}^2,1\ra + \la Q^*,1\ra\\
                &\quad + \la G_{\nabla\phi}(\nabla\phi_h^*)\dtau\phi_h + (\bsig_h^*+s_h^*\mathbf{I})\u_h^{n+1},\nabla\vtheta_h^{n+1}\ra\\
                &\quad + \la \dtau G(\nabla\phi_h),\vtheta_h^{n+1}\ra + \la\dtau \u_h,\u_h^{n+1}\ra.\\
                \intertext{By using \eqref{eq:dis2} with $\xi_h=\dtau\phi_h$ we find}
                \la\dtau e_{\mathrm{tot},h},1\ra&=\la\dtau f,1\ra - \la\dtheta f_h^{n},\dtau \vtheta_h\ra - \tfrac{\tau}{2}\la\snorm{\dtau\u_h}^2,1\ra\\
                &\quad+\la\tfrac{\M_h^*}{\vtheta_h^*}\mu_h^{n+1},\mu_h^{n+1}\ra + \la\eta_h^*\snorm{\Du_h^{n+1}}^2,1\ra + \la Q^*,1\ra\\
                &\quad+\la\mu_h^{n+1},\dtau\phi_h\ra - \la G_{\nabla\phi}(\nabla\phi_h^{n+1}),\vtheta_h^{n+1}\nabla\dtau\phi_h\ra\\
                &\quad - \la\dphi f(\phi_h^{n+1},\phi_h^n,\vtheta_h^{n+1}),\dtau\phi_h\ra + \la(\bsig_h^*+s_h^*\mathbf{I})\u_h^{n+1},\nabla\vtheta_h^{n+1}\ra\\
                &\quad + \la \dtau G(\nabla\phi_h),\vtheta_h^{n+1}\ra + \la\dtau \u_h,\u_h^{n+1}\ra.\\
                \intertext{Two final insertion of \eqref{eq:dis1} with $\psi_h=\mu_h^{n+1}$ and \eqref{eq:dis4} with $\vv_h=\u_h^{n+1}$ yield}
                \la\dtau e_{\mathrm{tot},h},1\ra&=\la\dtau f,1\ra - \la\dtheta f_h^{n},\dtau \vtheta_h\ra - \la\dphi f(\phi_h^{n+1},\phi_h^n,\vtheta_h^{n+1}),\dtau\phi_h\ra\\
                &\quad + \la \dtau G(\nabla\phi_h),\vtheta_h^{n+1}\ra - \la G_{\nabla\phi}(\nabla\phi_h^{n+1}),\vtheta_h^{n+1}\nabla\dtau\phi_h\ra  \\
                &\quad + \la\rho_{h}^{n+1},\div(\u_h^{n+1})\ra - \tfrac{\tau}{2}\la\snorm{\dtau\u_h}^2,1\ra + \la Q^*,1\ra + \la\b^*,\u_h^{n+1}\ra,\\
                \intertext{then using \eqref{eq:dis5} with $q=\rho_h^{n+1}$ and adding $\pm f(\phi_h^{n},\vtheta_h^{n+1})$ shows}
                \la\dtau e_{\mathrm{tot},h},1\ra&=\tfrac{1}{\tau}\la f(\phi_h^{n+1},\vtheta_h^{n+1})-f(\phi_h^{n},\vtheta_h^{n+1}),1\ra - \la\dphi f(\phi_h^{n+1},\phi_h^n,\vtheta_h^{n+1}),\dtau\phi_h\ra\\
                &\quad + \tfrac{1}{\tau}\la f(\phi_h^{n},\vtheta_h^{n+1})-f(\phi_h^{n},\vtheta_h^{n}),1\ra-\la\dtheta f(\phi_h^n,\vtheta_h^n),\dtau \vtheta_h\ra\\
                &\quad + \la \dtau G(\nabla\phi_h),\vtheta_h^{n+1}\ra - \la G_{\nabla\phi}(\nabla\phi_h^{n+1}),\vtheta_h^{n+1}\nabla\dtau\phi_h\ra  \\
                &\quad - \tfrac{\tau}{2}\la\snorm{\dtau\u_h}^2,1\ra + \la Q^*,1\ra + \la\b^*,\u_h^{n+1}\ra\\
                \intertext{and by using the Taylor-expansion of $f$ and $G$ in $\phi,\vtheta$ and $\nabla\phi$ one can see}
                \la\dtau e_{\mathrm{tot},h},1\ra&=\tfrac{\tau}{2}\Big(\la\partial_{\vtheta\vtheta}f(\phi_h^{n},\zeta_1^{n+1}),(d_\tau^{n+1} \vtheta_h)^2\ra - \la\snorm{d_\tau^{n+1}\u_h}^2,1\ra\\
                &\qquad  - \la G_{\nabla\phi\nabla\phi}(\nabla\zeta_2^{n+1})\nabla\dtau\phi_h,\vtheta_h^{n+1}\nabla\dtau\phi_h\ra\Big)\\
                &\quad+ \la Q^*,1\ra + \la\b^*,\u_h^{n+1}\ra\\
                &=\mathcal{D}_\mathrm{num}^{n+1} + \la Q^*,1\ra + \la\b^*,\u_h^{n+1}\ra,
            \end{align*}
            for some $\zeta_1^{n+1}\in[\vtheta_h^{n},\vtheta_h^{n+1}],\zeta_2^{n+1}\in[\phi_h^{n},\phi_h^{n+1}]$. For \labelcref{bc:thermal} we subtract $\vtheta_b\dtau s_h$ from both sides of \eqref{eq:disdtetot} to get
            \begin{align*}
                \la \dtau (e_h+\tfrac{1}{2}\snorm{\u_h}^2-\vtheta_{b}s_h),1\ra &= \la\dtau f,1\ra + \la \dtau G(\nabla\phi_h),\vtheta_h^{n+1}\ra\notag\\
                &\quad + \la\dtau s_h,\vtheta_h^{n+1}-\vtheta_b\ra - \la\dtheta f_h^n,\dtau\vtheta_h\ra\notag\\
                &\quad+\la\dtau \u_h,\u_h^{n+1}\ra - \tfrac{\tau}{2}\la\snorm{\dtau\u_h}^2,1\ra.
            \end{align*}
            then calculate as before, but with $\omega_h=\vtheta_h^{n+1}-\vtheta_b$. The negativity of $\mathcal{D}_\mathrm{num}^{k}$ follows from the properties of $f$ and $G$, cf. \labelcref{as:grad,as:pot}, as well as the positivity of $\vtheta_h$.
        \end{proof}

    \subsection{Existence of discrete solutions}
    
        To establish the existence of discrete solutions, the strategy from \cite{Hoehn26} is adopted, with the following restrictions imposed on the potential $f$.
        \begin{assump}
            \item We extend \labelcref{as:pot} and assume that $f$ is of the form
            \begin{equation*}
                f(\phi,\vtheta) = -b\vtheta\log(\vtheta) + \vtheta f_1(\phi) + f_2(\phi)
            \end{equation*}
            with $f_2(\phi)\geq -c$ for some constants $b,c>0$ independent of $\phi$ and $\vtheta$. Furthermore, we assume that there exists another constant $c_f>0$ independent of $\phi$ such that
            \begin{equation*}
                f_2(\phi) + c_ff_1(\phi) \geq -c.
            \end{equation*}
            By construction $F(\phi,\cdot):\RR_+\to\RR$ is strictly concave.\label{as:exergy}
            \item We assume that
            \begin{equation*}
                |f_\phi(\phi_h^{n+1},\phi_h^n,\vtheta_h^{n+1})| \leq C_f(|\vtheta_h^{n+1}|+1)(\snorm{\phi_h^{n+1}}^6 + \snorm{\phi_h^n}^6 + 1)
            \end{equation*}
            for some constant $C_f>0$.
            \label{as:fphi}
            \item We restrict the proof to $G(\nabla\phi)=\tfrac{\gamma}{2}\snorm{\nabla\phi}^2$. \label{as:Gphi}
            \item The mesh $\Th$ is quasi-uniform.\label{as:mesh}
        \end{assump}
        \begin{theorem}\label{thm:existence}
            Under the additional \cref{as:exergy,as:fphi,as:Gphi,as:mesh}, there exists at least one discrete solution $(\phi_h,\mu_{h},\vtheta_h,\u_h,p_h)$ of \cref{prob:nacns} with uniform positive temperature, i.e. $\vtheta_h^k>\underaccent{\bar}{\vtheta}>0$ for all $0\leq k\leq n_T$.
        \end{theorem}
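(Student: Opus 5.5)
We argue by induction over the time steps: given $(\phi_h^n,\vtheta_h^n,\u_h^n)$ with $\vtheta_h^n>0$, we show that the nonlinear system \eqref{eq:dis1}--\eqref{eq:dis5} for the unknowns at $t^{n+1}$ has a solution with $\vtheta_h^{n+1}\geq\underaccent{\bar}{\vtheta}>0$. Following \cite{Hoehn26}, we take the intermediate evaluation to be explicit, $g^*=g^n$. The coefficients $\M_h^*,\K_h^*,\eta_h^*,\bsig_h^*,s_h^*$ are then frozen, and $\vtheta_h^*=\vtheta_h^n$ is bounded below by the induction hypothesis.

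Because the scheme contains $1/\vtheta_h^{n+1}$, we first regularise it. We replace $\vtheta_h^{n+1}$ in all denominators and in $\log$ by a truncation $\max\{\vtheta_h^{n+1},\delta\}$ (nodewise, or via an interpolant), which gives a continuous map on the finite-dimensional space $\Vh\times\Vh\times\Vh^b\times\Xh^b\times\Qh$. Existence for this regularised system follows from a Leray--Schauder / Brouwer fixed-point argument in its standard finite-dimensional form: if $\mathcal{F}$ is continuous and $\la\mathcal{F}(x),x\ra>0$ on a large sphere, then $\mathcal{F}$ has a zero. Equivalently, we consider the homotopy $\lambda\mathcal{F}$ and show that any solution is a priori bounded independently of $\lambda$. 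The pressure is handled on the discretely divergence-free subspace of $\Xh^b$, using the inf--sup stability of the Taylor--Hood-type pair $\Xh/\Qh$.

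The a priori bounds come from the discrete exergy identity of \cref{thm:discstruc}. We test with $\omega_h=\vtheta_h^{n+1}-\vtheta_b$ (or with $\vtheta_h^{n+1}-c_f$ together with entropy conservation in the periodic or closed case), $\xi_h=\dtau\phi_h$, $\psi_h=\mu_h^{n+1}$ and $\vv_h=\u_h^{n+1}$. This bounds the discrete exergy $\int f(\phi^{n+1}_h,\vtheta^{n+1}_h)-\vtheta\,\partial_\vtheta f+\tfrac12|\u|^2-\vtheta_b s$ at step $n+1$ by data from step $n$ plus source terms, using $\mathcal{D}_h^{n+1}\geq0$ and $\mathcal{D}_\mathrm{num}^{n+1}\leq0$.

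With \cref{as:exergy} and \cref{as:Gphi}, the exergy density is $b\vtheta-b\vtheta_b\log\vtheta+\vtheta_b\tfrac{\gamma}{2}|\nabla\phi|^2+\vtheta_b f_1+f_2+\dots$. Using $f_2+c_ff_1\geq-c$, it is coercive: it controls $\|\vtheta_h^{n+1}\|_{L^1}$, $\|\nabla\phi_h^{n+1}\|_{L^2}$, $\|\u_h^{n+1}\|_{L^2}$ and $\int-\log\vtheta_h^{n+1}$. Equivalence of norms in finite dimensions then bounds all unknowns, $\mu_h^{n+1}$ included through \eqref{eq:dis2} and \cref{as:fphi}. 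The dissipation terms $\mathcal{D}_h^{n+1}\tau$ are bounded as well.

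The main obstacle is the uniform positive lower bound on $\vtheta_h^{n+1}$, which is what allows the truncation to be removed ($\delta\to0$, or $\delta$ below the bound). For this we use the entropy dissipation bound. The term $\tau\int\K_h^n|\nabla\vtheta_h^{n+1}|^2/(\vtheta_h^{n+1}(\vtheta_h^n)^3)$ is bounded, and with quasi-uniformity (\cref{as:mesh}) and inverse estimates we turn the bound on $\int-\log\vtheta_h^{n+1}$ into a pointwise one. Since $\vtheta_h^{n+1}\in\Vh$ is piecewise linear and its $L^\infty$ norm is bounded by an $h$-dependent constant, a vertex value $\vtheta\to0$ would force $\int_K-\log\vtheta_h^{n+1}$ on a neighbouring element of volume $\sim h^d$ to blow up, but only logarithmically; this is why we also need the gradient/dissipation bound to exclude it. This yields $\vtheta_h^{n+1}\geq\underaccent{\bar}{\vtheta}(h,\tau,n)>0$. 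Taking the minimum over the finitely many steps $k\leq n_T$ gives the uniform bound claimed in \cref{thm:existence}.
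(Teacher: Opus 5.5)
Your existence mechanism does not close. A Brouwer/Leray--Schauder argument for the truncated system needs a priori bounds for \emph{all} of its solutions. This includes those with $\vtheta_h^{n+1}<\delta$ (or even negative) somewhere, which are exactly the ones your lower bound is supposed to exclude. For those solutions you cannot invoke \cref{thm:discstruc}, because it presupposes positive temperature:
\begin{itemize}
\item The sign of $\mathcal{D}_\mathrm{num}^{n+1}$ rests on $\vtheta_h^{n+1}>0$ in the term $-\tfrac{\tau}{2}\la G_{\nabla\phi\nabla\phi}\nabla\dtau\phi_h,\vtheta_h^{n+1}\nabla\dtau\phi_h\ra$. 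The factor $\vtheta_h^{n+1}$ in \eqref{eq:dis2} is not among those you truncate.
\item Once $\log\vtheta_h^{n+1}$ is replaced by $\log\max\{\vtheta_h^{n+1},\delta\}$, the entropy is constant in $\vtheta_h^{n+1}$ on $\{\vtheta_h^{n+1}<\delta\}$. The internal energy $b\vtheta+f_2(\phi)$ only controls $\vtheta$ from above.
\end{itemize}
So the structure gives no information where the truncation is active. Your lower bound is derived from the untruncated identities, so it presupposes what it should prove.

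The fixed-point formulation itself is also not set up. The balance is not of the form $\la\mathcal{F}(x),x\ra>0$: it comes from the test tuple $(\mu_h,\dtau\phi_h,\vtheta_h-\vtheta_b,\u_h,p_h)$ and controls neither $\mu_h$, $p_h$ nor the mean of $\phi_h$. You also do not specify a homotopy that preserves the entropy/energy structure and ends at a system of nonzero degree; taken literally, $\lambda\mathcal{F}$ vanishes identically at $\lambda=0$.

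The paper sidesteps all of this with the degree result \cref{th:topo}, posed on the open set $V=\{\vtheta>0\}$.
\begin{itemize}
\item Condition (iii) only asks for bounds on solutions that already have positive temperature, for which the identities hold exactly.
\item The parameter $\alpha$ multiplies the transport, dissipation and coupling terms, so the entropy and energy balances survive for every $\alpha$.
\item At $\alpha=0$ the system decouples, and the degree is read off from the block-triangular Jacobian. Strict concavity from \cref{as:exergy} handles $\mathbf{S}^\vtheta$, and inf--sup stability handles the velocity--pressure block.
\item The mean of $\phi_h$ is controlled separately by $\psi_h=1$ in \eqref{eq:exdis1}, combined with \eqref{eq:exdis5}.
\end{itemize}

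Independently of this, your lower-bound step is false as stated. Suppose a continuous piecewise linear $\vtheta_h$ has one nodal value tending to $0$ while its neighbours stay positive. Then $\int_K-\log\vtheta_h$ does not blow up at all, since the logarithm is integrable along a linear profile in every dimension. For $d=2,3$ the same holds for $\int_K\snorm{\nabla\vtheta_h}^2/\vtheta_h$ and $\int_K\vtheta_h^{-1}$.

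With your explicit choice $g^*=g^n$, $\mathcal{D}_h^{n+1}$ contains only a single inverse power of $\vtheta_h^{n+1}$. For example, its $\K$-part is comparable to $\int\snorm{\nabla\vtheta_h^{n+1}}^2/\vtheta_h^{n+1}$. So every quantity you have bounded stays finite along such a family, and no inverse estimate on a quasi-uniform mesh can turn these bounds into a positive nodal lower bound.

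The paper treats the fully implicit case $t^*=t^{n+1}$. There the heat-flux dissipation becomes $\K\snorm{\nabla\tfrac{1}{\vtheta_h}}^2=\K\snorm{\nabla\vtheta_h}^2/\vtheta_h^4$, which does blow up in exactly this situation. Freezing the coefficients at $t^n$ throws this control away.
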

        \begin{proof}
            The proof is a direct extension of the argument in \cite{Hoehn26}, with important details provided in the appendix. For simplicity, only the fully implicit case is considered, that is, $t^*=t^{n+1}$  and external forces are neglected.
        \end{proof}
    
\section{Benchmarks and numerical examples}\label{sec:num}

    In this section, we present the experimental order of convergence in both space and time, and demonstrate applications to melting and solidification processes. All simulations are conducted in two dimensions, where the gradient-free part of the potential $F$ is chosen following \cite{potential} as
    \begin{equation}\label{eq:f}
    f(\phi,\vtheta)=H_\vtheta(\vtheta)\phi^2(1-\phi)^2 - \mathcal{L}H_\phi(\phi)(\tfrac{\vtheta}{\vtheta_m}-1) - C_\mathrm{vsh}\left(\vtheta\log(\tfrac{\vtheta}{\vtheta_m})-(\vtheta-\vtheta_m)\right).   
    \end{equation}
    The potential consists of a local part, $f_\mathrm{loc} = H_\vtheta(\vtheta)\phi^2(1-\phi)^2$, which represents the local thermodynamic stability of a phase at a given temperature, and a heat part, $f_\mathrm{ht} = - \mathcal{L}H_\phi(\phi)(\tfrac{\vtheta}{\vtheta_m}-1) - C_\mathrm{vsh}\left(\vtheta\log(\tfrac{\vtheta}{\vtheta_m})-(\vtheta-\vtheta_m)\right)$, which modifies the double-well potential due to local temperature variations and incorporates both the latent heat term and the effects of heat absorption or release. The function $H_\vtheta(\vtheta) = H_\mathrm{pt}-H_\mathrm{cf}(\vtheta-\vtheta_m)$ defines the temperature-dependent barrier height, scaling the double-well potential by the potential factor $H_\mathrm{pt}$ at the melting temperature $\vtheta_m$, and is adjusted by the configurational factor $H_\mathrm{cf}$. Similarly, $H_\phi(\phi)=\phi^3(6\phi^2-15\phi+10)$ serves as an interpolation function in $\phi$, which, together with the constant $\mathcal{L}$, scales the latent heat contribution. Finally, $C_\mathrm{vsh}$ denotes the volumetric specific heat. The viscosity $\eta$ is defined as
    \begin{equation}
        \eta(\phi)=\frac{\eta_l\eta_s}{H_\phi(\phi)(\eta_s-\eta_l)+\eta_l}
    \end{equation}
    where $\eta_l$ and $\eta_s$ denote the viscosities of the liquid and solid phase, respectively, while the gradient contribution $G(\nabla\phi)$ in the potential $F$ will be specified later according to the requirements of each application.
    \begin{remark}
        As $H_\phi$ is considered to be an interpolation function, it will be truncated to $[0,1]$ with a constant continuation on $\RR$. Therefore $f$ from \eqref{eq:f} fulfills \cref{as:exergy,as:fphi} with $c_f=\vtheta_m$ and $c=-C_\mathrm{vsh}\vtheta_m\min\left(-1,\log(\vtheta_m)\right)$.
    \end{remark}
    At each timestep, the nonlinear system is solved using Newton's method with an increment tolerance of $10^{-8}$, where the resulting linear system is solved directly. The time average \eqref{eq:fphi} is computed using a Gauss-Legendre quadrature rule with five nodes, which is exact in this context. Also all $g^*$ are taken explicitly, i.e. $g^*=g^n$.
    
    \subsection{Experimental order of convergence}
    
        In this section, we present numerical results on the convergence of \cref{prob:nacns}. The convergence study is conducted for $G(\nabla\phi)=\tfrac{\gamma^2}{2}\snorm{\nabla\phi}^2$, where $\gamma$ is the interface parameter. The mobility $\M$, heat conductivity $\K$, and external forces $Q$ and $\b$ are set as constants according to the following parameters:\\
        \begin{minipage}[t]{0.32\textwidth}
            \vspace{0.1cm}
            \begin{itemize}
                \item $\M=10$
                \item $\K=0.01$
                \item $Q=0$
                \item $\b=\mathbf{0}$
            \end{itemize}
            \vspace{0.2cm}
        \end{minipage}\hfill
        \begin{minipage}[t]{0.32\textwidth}
            \vspace{0.1cm}
            \begin{itemize}
                \item $\gamma=0.05$
                \item $\eta_s=1$
                \item $\eta_l=0.001$
                \item $\vtheta_m=1$
            \end{itemize}
            \vspace{0.2cm}
        \end{minipage}\hfill
        \begin{minipage}[t]{0.32\textwidth}
            \vspace{0.1cm}
            \begin{itemize}
                \item $\mathcal{L}=1$
                \item $H_\mathrm{pt}=1$
                \item $H_\mathrm{cf}=0.1$
                \item $C_\mathrm{vsh}=1$
            \end{itemize}
            \vspace{0.2cm}
        \end{minipage}
        
        We also restrict our analysis to \cref{bc:periodic}, as we expect similar results for all choices. Since different convergence orders are expected in space and time, we first examine spatial convergence by fixing the time step size to $\tau=2^{-11}\cdot10^{-1}\approx 4.8\cdot 10^{-5}$ with an end time of $T=0.05$, and consider spatial resolutions of $h_k=2^{-k}$ for $k=4,5,6,7$ on the domain $\Omega=[0,1]^2\subset\RR^2$. The initial conditions are chosen as follows:
        \begin{align*}
            \phi_0(x,y)&:=0.5\left(\operatorname{trans}_{[0,1]}\left(\frac{\sqrt{(x-0.25)^2+(y-0.25)^2}-0.15)}{3.25\cdot 10^{-2}}\right)\right.\\
            &\qquad-\left.\operatorname{trans}_{[0,1]}\left(\frac{\sqrt{(x-0.75)^2+(y-0.75)^2}-0.15)}{3.25\cdot 10^{-2}}\right)+1\right),\\
            \vtheta_0(x,y)&:=\vtheta_m\cdot\exp\left(\ln(0.5)\cdot(\sin(4\pi x)\sin(4\pi y)+1)(\sin(2\pi x)+\sin(2\pi y))\cdot0.5\right),
        \end{align*}
        where
        \begin{equation*}
            \operatorname{trans}_{[0,1]}(z)=\frac{1-\tanh(z)}{2}\in[0,1].
        \end{equation*}
        The temporal evolution for this setup is considered in \cref{sec:ex_melt}.
        The errors under consideration are divided into two groups determined by their used time norm, and since no analytical solution is available, they are computed with respect to the corresponding refined solution. The errors of the first group, being the ones using the discrete $L^\infty$ norm in time, are defined as
        \begin{align*}
            \mathrm{err}(\nabla\phi,h_k)&:= \max_{n\in\Itau}\norm{\phi^n_{h_k}-\phi^n_{h_{k+1}}}_{H^1},
            &
            \mathrm{err}(\vtheta,h_k)&:=\max_{n\in\Itau}\norm{\vtheta^n_{h_k}-\vtheta^n_{h_{k+1}}}_{L^2},
            \\
            \mathrm{err}(\u,h_k)&:=\max_{n\in\Itau}\norm{\u^n_{h_k}-\u^n_{h_{k+1}}}_{L^2},
        \end{align*}
        while the errors of the second group, using the discrete $L^2$ norm in time, are defined as
        \begin{align*}
            \mathrm{err}(\mu,h_k)&:=\sqrt{\tau\sum_{n=1}^{n_\tau}\norm{\mu^n_{h_k}-\mu^n_{h_{k+1}}}_{L^2}^2},
            &
            \mathrm{err}(\nabla\vtheta,h_k)&:=\sqrt{\tau\sum_{n=1}^{n_\tau}\norm{\vtheta^n_{h_k}-\vtheta^n_{h_{k+1}}}_{H^1}^2}
            \\
            \mathrm{err}(\nabla\u,h_k)&:=\sqrt{\tau\sum_{n=1}^{n_\tau}\norm{\u^n_{h_k}-\u^n_{h_{k+1}}}_{H^1}^2},
            &
            \mathrm{err}(p,h_k)&:=\sqrt{\tau\sum_{n=1}^{n_\tau}\norm{p^n_{h_k}-p^n_{h_{k+1}}}_{L^2}^2}.
        \end{align*}
        The experimental order of convergence for the above errors is then computed as
        \begin{equation*}
            \mathrm{eoc}_{k}:=\log_2\left(\frac{\mathrm{err}(a,h_{k-1})}{\mathrm{err}(a,h_{k})}\right),
        \end{equation*}
        for $a\in\{\nabla\phi,\mu,\vtheta,\nabla\vtheta,\u,\nabla\u,p\}$. In \cref{tab:sprates1}, we present the errors and rates in the discrete $L^\infty$ time norms for the spatial convergence test. The data indicates first order accuracy for the $L^\infty(0,T;H^1(\Omega))$-norm of $\phi_h$ and second order accuracy for the $L^\infty(0,T;L^2(\Omega))$-norm of $\vtheta_h$, as suspected from the linear elements in space. For $\u_h$ we observe a better order convergence in the $L^\infty(0,T;L^2(\Omega))$-norm in \cref{tab:sprates1} and 2nd order in the $L^2(0,T;H^1(\Omega))$-norm in \cref{tab:sprates2}, as expected for $P_2$-elements for $\u$ in space. Coherently in \cref{tab:sprates2} we see the expected 2nd order convergence for the $L^2(0,T;L^2(\Omega)$-norm of $p_h$. Lastly the $L^2(0,T;L^2(\Omega)$-norm of $\mu_h$ and $L^2(0,T;H^1(\Omega)$-norm of $\vtheta_h$ in \cref{tab:sprates2} indicate second and first order convergence in space as predicted by the use of linear elements.
        \begin{table}[htbp!]
            \centering
            \caption{Errors and rates in $L^\infty$ time norms for spatial convergence.\label{tab:sprates1}} 
            \begin{tabular}{c|l|c|l|c|l|c}
                $ k $ & $\mathrm{err}(\nabla\phi,h_k)$ &  $\mathrm{eoc}_k$ & $\mathrm{err}(\vtheta,h_k)$ & $\mathrm{eoc}_k$ & $\mathrm{err}(\u,h_k)$ & $\mathrm{eoc}_k$\\
                \hline
                0 & $8.94\cdot 10^{-1}$ & -- & $2.59\cdot 10^{-2}$ & -- & $9.55\cdot 10^{-3}$ & --\\
                1 & $4.98\cdot 10^{-1}$ & $0.85$ & $7.43\cdot 10^{-3}$ & $1.80$ & $1.74\cdot 10^{-3}$ & $2.46$\\
                2 & $2.63\cdot 10^{-1}$ & $0.92$ & $1.77\cdot 10^{-3}$ & $2.07$ & $1.60\cdot 10^{-4}$ & $3.44$\\
                3 & $1.34\cdot 10^{-1}$ & $0.97$ & $4.62\cdot 10^{-4}$ & $1.94$ & $3.09\cdot 10^{-5}$ & $2.37$\\
            \end{tabular}
        \end{table}
        \begin{table}[htbp!]
            \centering
            \caption{Errors and rates in $L^2$ time norms for spatial convergence.\label{tab:sprates2}} 
            \begin{tabular}{c|l|c|l|c|l|c|l|c|l|c}
                $ k $ & $\mathrm{err}(\mu,h_k)$ &  $\mathrm{eoc}_k$ & $\mathrm{err}(\nabla\vtheta,h_k)$ & $\mathrm{eoc}_k$ & $\mathrm{err}(\nabla\u,h_k)$ & $\mathrm{eoc}_k$ & $\mathrm{err}(p,h_k)$ & $\mathrm{eoc}_k$\\
                \hline
                0 & $9.84\cdot 10^{-3}$ & -- & $3.70\cdot 10^{-1}$ & -- & $1.39\cdot 10^{-1}$ & -- & $4.92\cdot 10^{-3}$ & --\\
                1 & $3.12\cdot 10^{-3}$ & $1.66$ & $2.05\cdot 10^{-1}$ & $0.85$ & $6.10\cdot 10^{-2}$ & $1.19$ & $1.68\cdot 10^{-3}$ & $1.55$\\
                2 & $7.85\cdot 10^{-4}$ & $1.99$ & $1.06\cdot 10^{-1}$ & $0.95$ & $1.14\cdot 10^{-2}$ & $2.42$ & $3.74\cdot 10^{-4}$ & $2.16$\\
                3 & $2.26\cdot 10^{-4}$ & $1.80$ & $5.35\cdot 10^{-2}$ & $0.99$ & $1.75\cdot 10^{-3}$ & $2.70$ & $9.70\cdot 10^{-5}$ & $1.95$\\
            \end{tabular}
        \end{table}
        
        For the time convergence study we use the same set of parameters as before but this time fixing the spatial resolution to a maximum mesh diameter of $h=2^{-6}$ while considering time steps sizes of $\tau=2^{-k}\cdot10^{-1}$ for $k=1,...,11$. The errors under consideration are again divided into the discrete $L^\infty$ time errors
        \begin{align*}
            \mathrm{err}(\nabla\phi,\tau_k)&:= \max_{n\in\Itauk}\norm{\phi^n_{h,\tau_k}-\phi^{2n}_{h,\tau_{k+1}}}_{H^1},
            &
            \mathrm{err}(\vtheta,\tau_k)&:=\max_{n\in\Itauk}\norm{\vtheta^n_{h,\tau_k}-\vtheta^{2n}_{h,\tau_{k+1}}}_{L^2},
            \\
            \mathrm{err}(\u,\tau_k)&:=\max_{n\in\Itauk}\norm{\u^n_{h,\tau_k}-\u^{2n}_{h,\tau_{k+1}}}_{L^2},
        \end{align*}
        and the discrete $L^2$ time errors
        \begin{align*}
            \mathrm{err}(\mu,\tau_k)&:=\sqrt{\tau_k\sum_{n=1}^{n_{\tau_k}}\norm{\mu^n_{h,\tau_k}-\mu^{2n}_{h,\tau_{k+1}}}_{L^2}^2},
            &
            \mathrm{err}(\nabla\vtheta,\tau_k)&:=\sqrt{\tau_k\sum_{n=1}^{n_{\tau_k}}\norm{\vtheta^n_{h,\tau_k}-\vtheta^{2n}_{h,\tau_{k+1}}}_{H^1}^2},
            \\
            \mathrm{err}(\nabla\u,\tau_k)&:=\sqrt{\tau_k\sum_{n=1}^{n_{\tau_k}}\norm{\u^n_{h,\tau_k}-\u^{2n}_{h,\tau_{k+1}}}_{H^1}^2},
            &
            \mathrm{err}(p,\tau_k)&:=\sqrt{\tau_k\sum_{n=1}^{n_{\tau_k}}\norm{p^n_{h,\tau_k}-p^{2n}_{h,\tau_{k+1}}}_{L^2}^2}.
        \end{align*}
        Note, that we only use the evaluations on the respective corse time grid of the two compared functions, therefore avoid porjections and interpolations between different grids in time.
        The experimental order of convergence (eoc) in time is defined similar to above as 
        \begin{equation*}
            \mathrm{eoc}_k:=\log_2\left(\frac{\mathrm{err}(a,\tau_{k-1})}{\mathrm{err}(a,\tau_{k})}\right),
        \end{equation*}
        for $a\in\{\nabla\phi,\mu,\vtheta,\nabla\vtheta,\u,\nabla\u,p\}$. The errors and rates for the time convergence test can be found in \cref{tab:tirates1,tab:tirates2}. As expected, optimal first-order convergence in time is achieved in every norm.
        \begin{table}[htbp!]
            \centering
            \caption{Errors and rates in $L^\infty$-norm for convergence in time.\label{tab:tirates1}} 
            \begin{tabular}{c|l|c|l|c|l|c}
                $ k $ & $\mathrm{err}(\nabla\phi,\tau_k)$ &  $\mathrm{eoc}_k$ & $\mathrm{err}(\vtheta,\tau_k)$ & $\mathrm{eoc}_k$ & $\mathrm{err}(\u,\tau_k)$ &  $\mathrm{eoc}_k$ \\
                \hline
                7 & $4.34\cdot 10^{-3}$ & $0.99$ & $3.15\cdot 10^{-4}$ & $0.99$ & $6.20\cdot 10^{-5}$ & $0.99$\\
                8 & $2.18\cdot 10^{-3}$ & $0.99$ & $1.58\cdot 10^{-4}$ & $0.99$ & $3.11\cdot 10^{-5}$ & $0.99$\\
                9 & $1.09\cdot 10^{-3}$ & $1.00$ & $7.93\cdot 10^{-5}$ & $1.00$ & $1.56\cdot 10^{-5}$ & $1.00$\\
                10 & $5.46\cdot 10^{-4}$ & $1.00$ & $3.97\cdot 10^{-5}$ & $1.00$ & $7.81\cdot 10^{-6}$ & $1.00$\\
            \end{tabular}
        \end{table}
        \begin{table}[htbp!]
            \centering
            \caption{Errors and rates in $L^2$-norm for convergence in time.\label{tab:tirates2}} 
            \begin{tabular}{c|l|c|l|c|l|c|l|c|l|c}
                $ k $ & $\mathrm{err}(\mu,\tau_k)$ &  $\mathrm{eoc}_k$ & $\mathrm{err}(\nabla\vtheta,\tau_k)$ & $\mathrm{eoc}_k$ & $\mathrm{err}(\nabla\u,\tau_k)$ &  $\mathrm{eoc}_k$ & $\mathrm{err}(p,\tau_k)$ &  $\mathrm{eoc}_k$  \\
                \hline
                7 & $2.32\cdot 10^{-4}$ & $0.98$ & $2.58\cdot 10^{-3}$ & $0.98$ & $3.03\cdot 10^{-4}$ & $0.99$ & $1.18\cdot 10^{-4}$ & $0.98$\\
                8 & $1.17\cdot 10^{-4}$ & $0.99$ & $1.30\cdot 10^{-3}$ & $0.99$ & $1.52\cdot 10^{-4}$ & $1.00$ & $5.96\cdot 10^{-5}$ & $0.99$\\
                9 & $5.89\cdot 10^{-5}$ & $0.99$ & $6.52\cdot 10^{-4}$ & $0.99$ & $7.60\cdot 10^{-5}$ & $1.00$ & $2.99\cdot 10^{-5}$ & $1.00$\\
                10 & $2.96\cdot 10^{-5}$ & $0.99$ & $3.27\cdot 10^{-4}$ & $1.00$ & $3.80\cdot 10^{-5}$ & $1.00$ & $1.50\cdot 10^{-5}$ & $1.00$\\
            \end{tabular}
        \end{table}
    
    \subsection{Simultaneous melting and solidification example }\label{sec:ex_melt}

        With the first example we will demonstrate the structure preserving properties of the discretisation while also showing both melting and solidification side by side. This is done by considering only \cref{bc:periodic} on the domain $\Omega=[0,1]^2\subset\RR^2$ without external sources and forces, i.e. $Q=\b=0$, together with the following initial condition:
        \begin{align*}
            \phi_0(x,y)&:=0.5\left(\operatorname{trans}_{[0,1]}\left(\frac{\sqrt{(x-0.25)^2+(y-0.25)^2}-0.15)}{3.25\cdot 10^{-2}}\right)\right.\\
            &\qquad-\left.\operatorname{trans}_{[0,1]}\left(\frac{\sqrt{(x-0.75)^2+(y-0.75)^2}-0.15)}{3.25\cdot 10^{-2}}\right)+1\right),\\
            \vtheta_0(x,y)&:=\vtheta_m\exp\left(\ln(0.5)\cdot(\sin(4\pi x)\sin(4\pi y)+1)(\sin(2\pi x)+\sin(2\pi y))\cdot0.5\right),\\
            \u(x,y)&:=(0,0)^\top.
        \end{align*}
         Both initial conditions for $\phi$ and $\vtheta$ are depicted in \cref{fig:ex_melt_init}. 
         \begin{figure}[htbp!]
            \centering
            \includegraphics[trim={16cm 0cm 9cm 0cm},clip,scale=0.14]{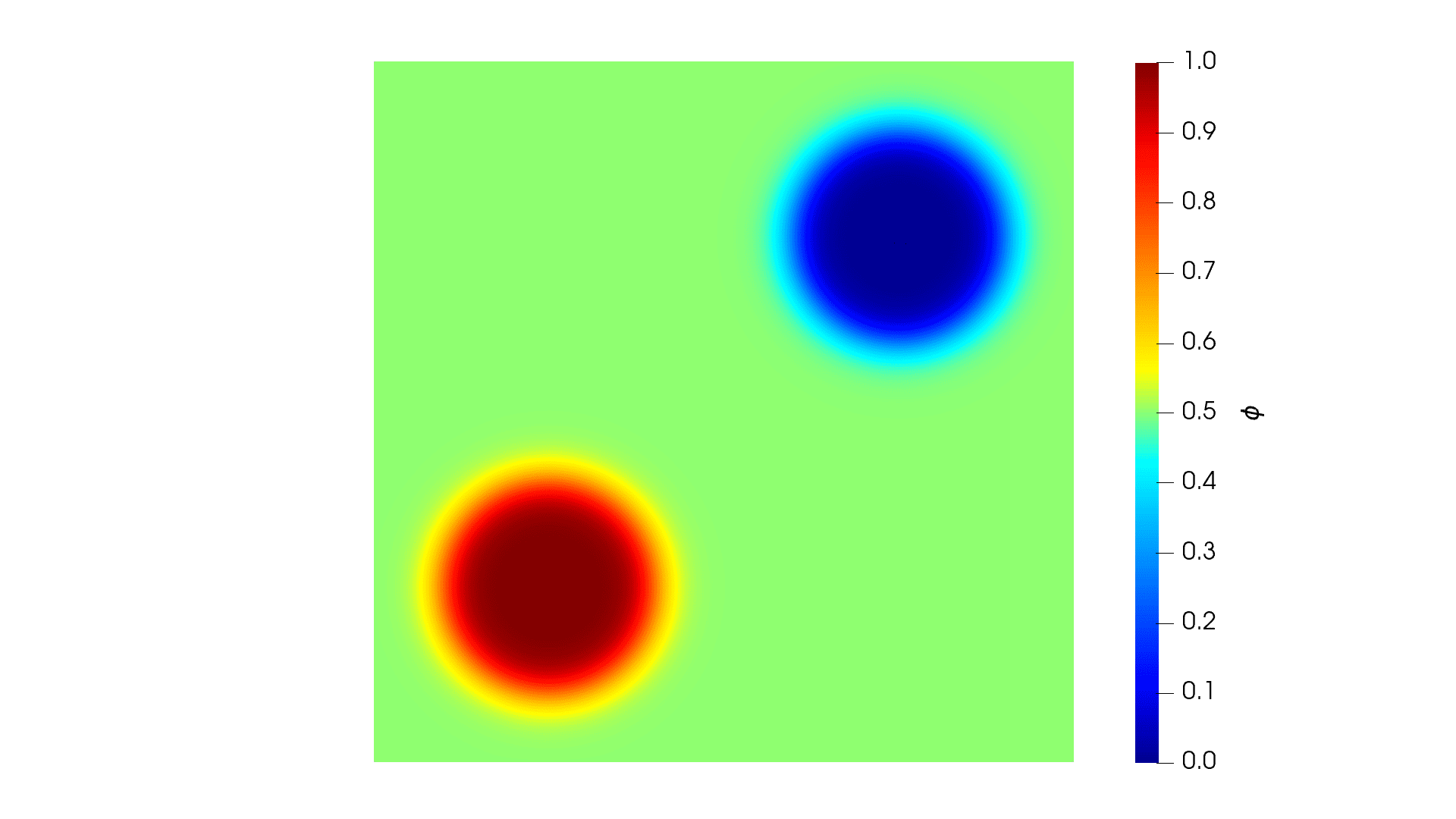}
            \includegraphics[trim={16cm 0cm 9cm 0cm},clip,scale=0.14]{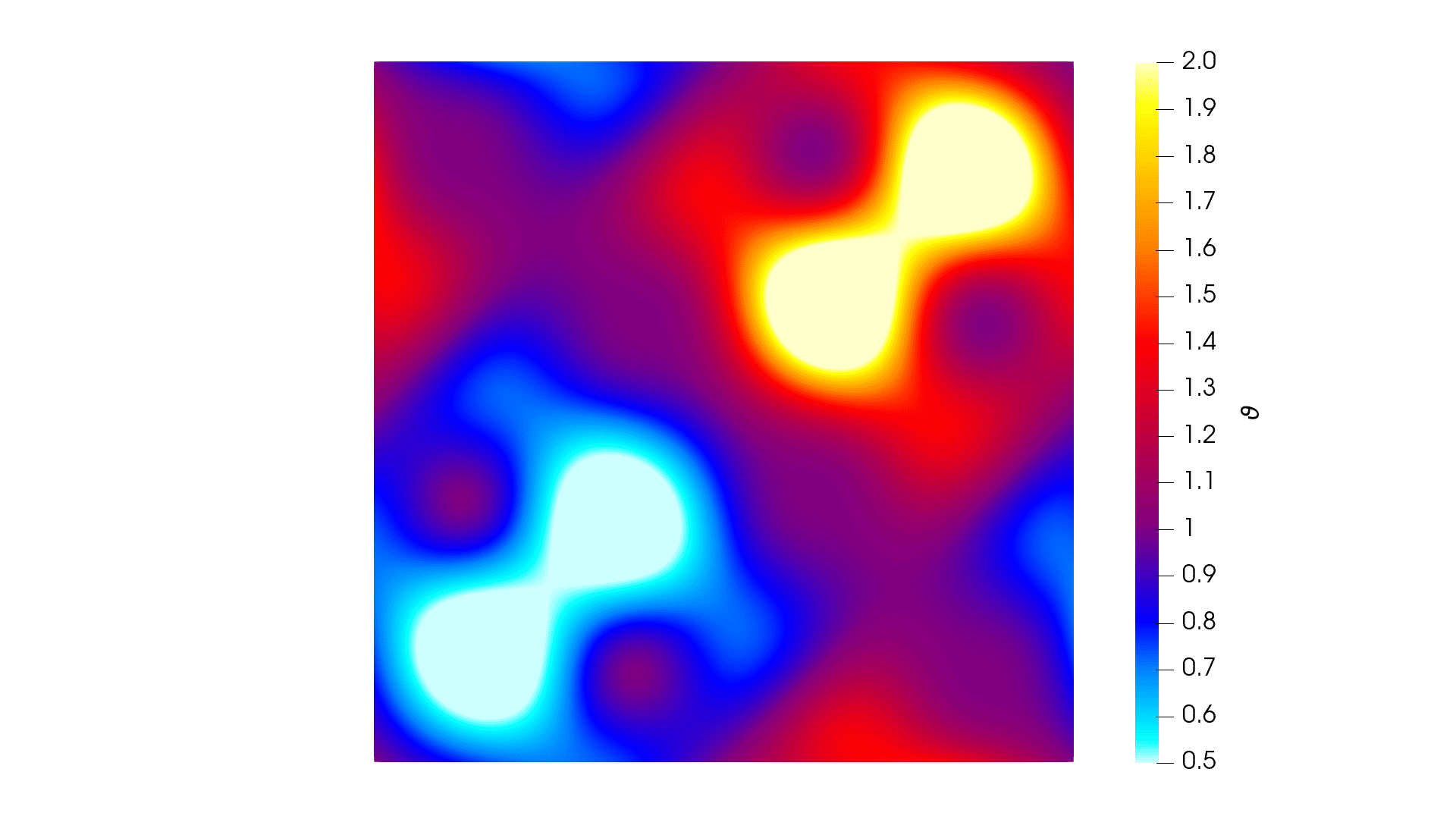} 
            \caption{Initial condition for the first example of the phase variable $\phi$ (left) and the temperature $\vtheta$ (right).}
            \label{fig:ex_melt_init}
        \end{figure}
        
        For the phase variable $\phi$ we consider a mixture of solid and melt around $\phi=0.5$ which contains a completely solidified phase in the right upper corner, i.e. $\phi=0$, and a completely melted region in the left lower corner, i.e. $\phi=1$. For $\vtheta$ we impose a profile which allows simultaneous overheating and undercooling in both pure regions, with values around the melting temperature $\vtheta_m=1$ for the rest of the domain. The remaining parameters are set similar to the convergence study, only sharpening the interface parameter $\gamma$ to $0.025$.
        \begin{figure}[htbp!]
            \centering
            \hspace*{-1.5em}
            \begin{tabular}{c@{}c@{}c@{}c@{}c@{}c@{}}
                & $t=0.5$ & $t=2$ & $t=3$ & $t=5$ & 
                \\[-0.5em]
                \includegraphics[trim={8.5cm 0cm 52.2cm 0cm},clip,scale=0.09]{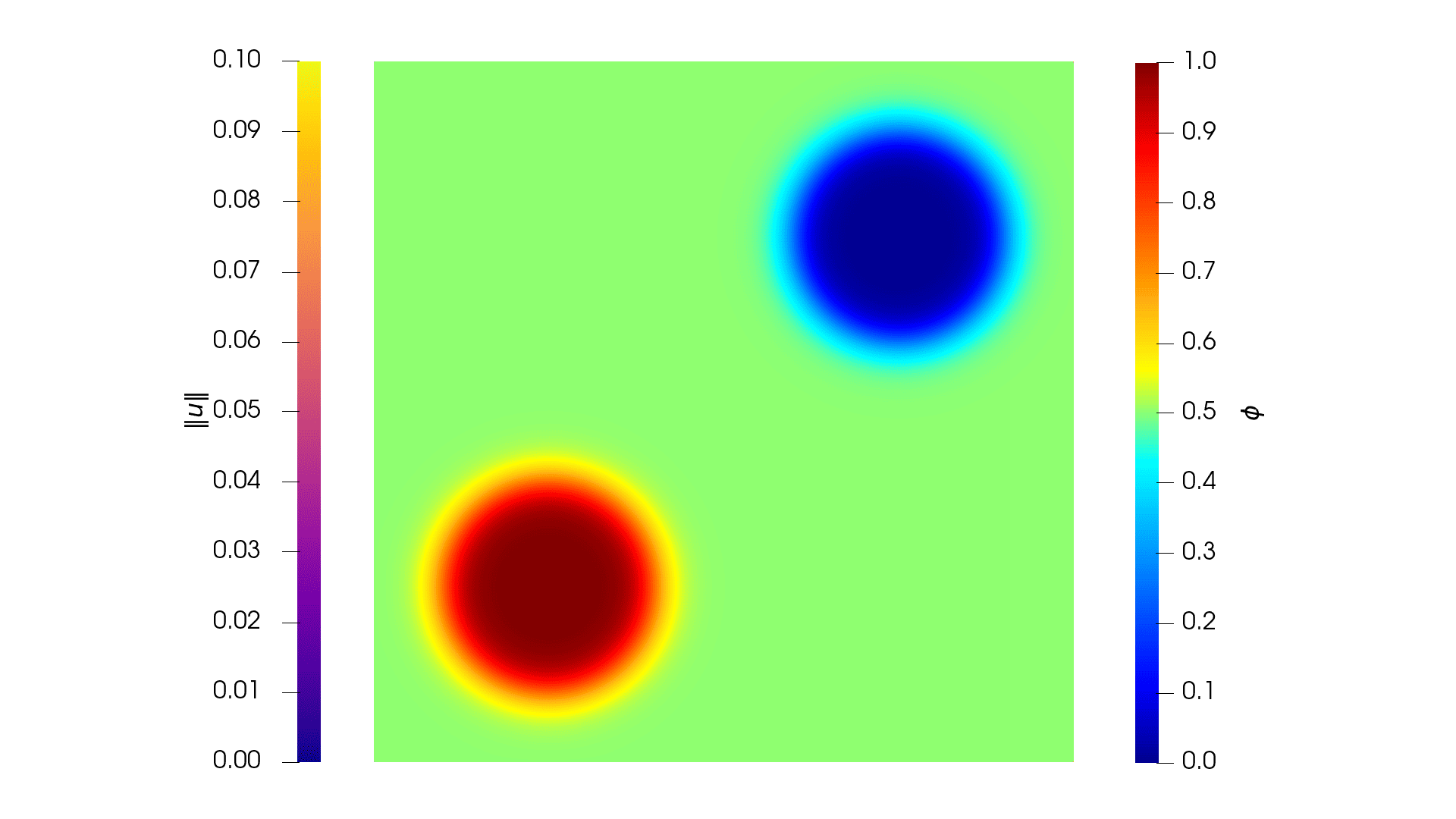}
                &
                \includegraphics[trim={17cm 0cm 17cm 0cm},clip,scale=0.09]{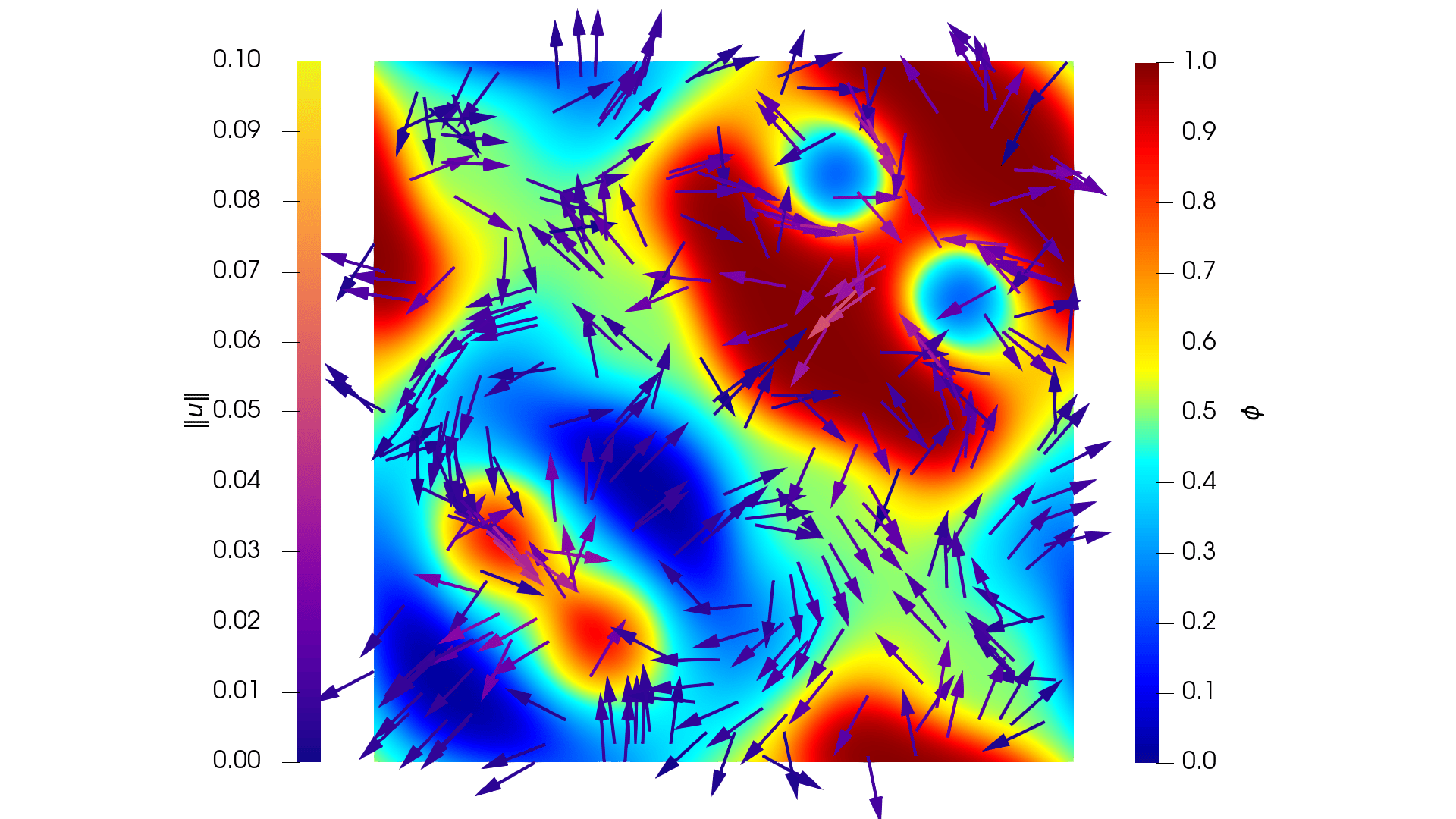} 
                &
                \includegraphics[trim={17cm 0cm 17cm 0cm},clip,scale=0.09]{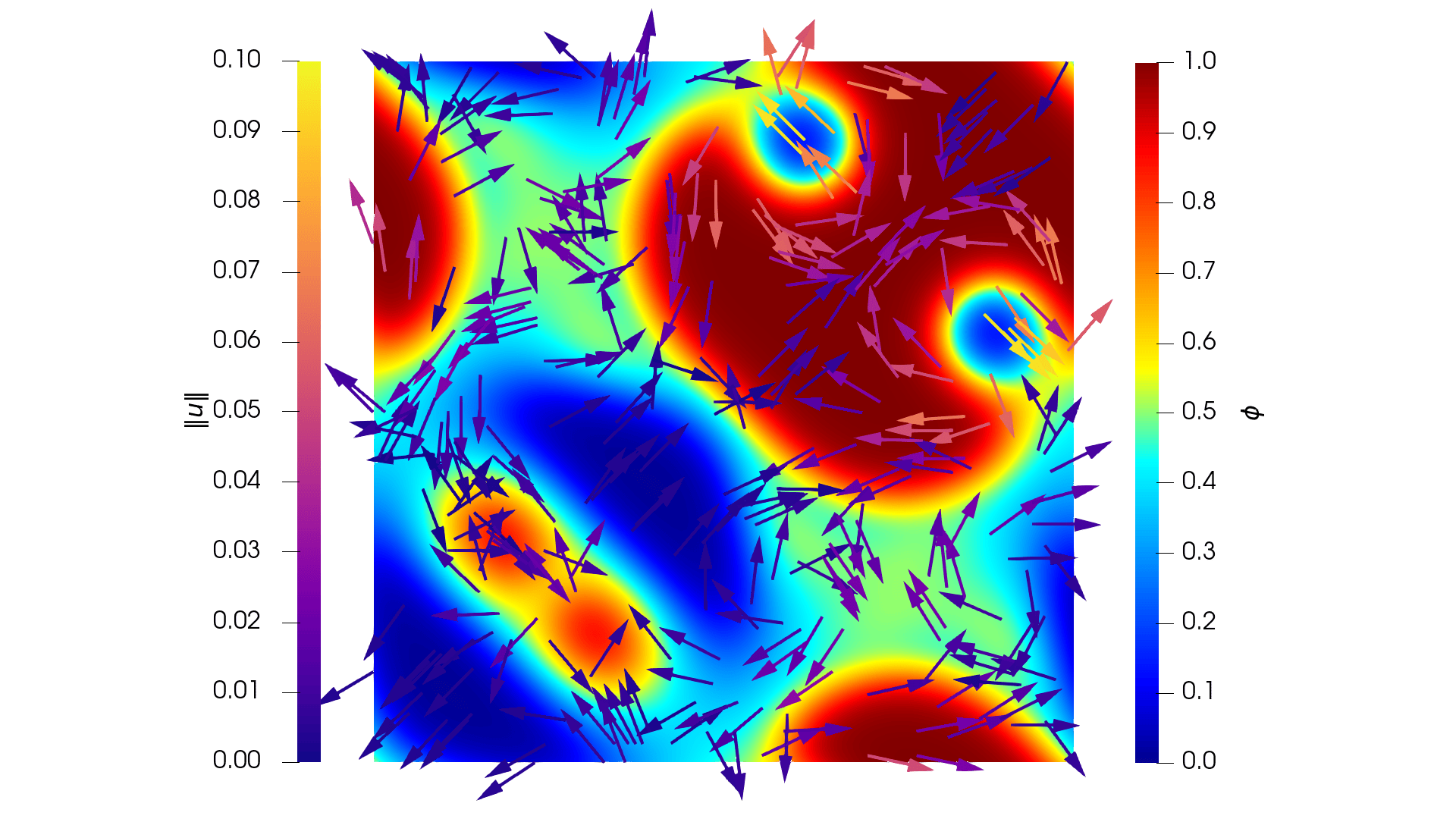}
                &
                \includegraphics[trim={17cm 0cm 17cm 0cm},clip,scale=0.09]{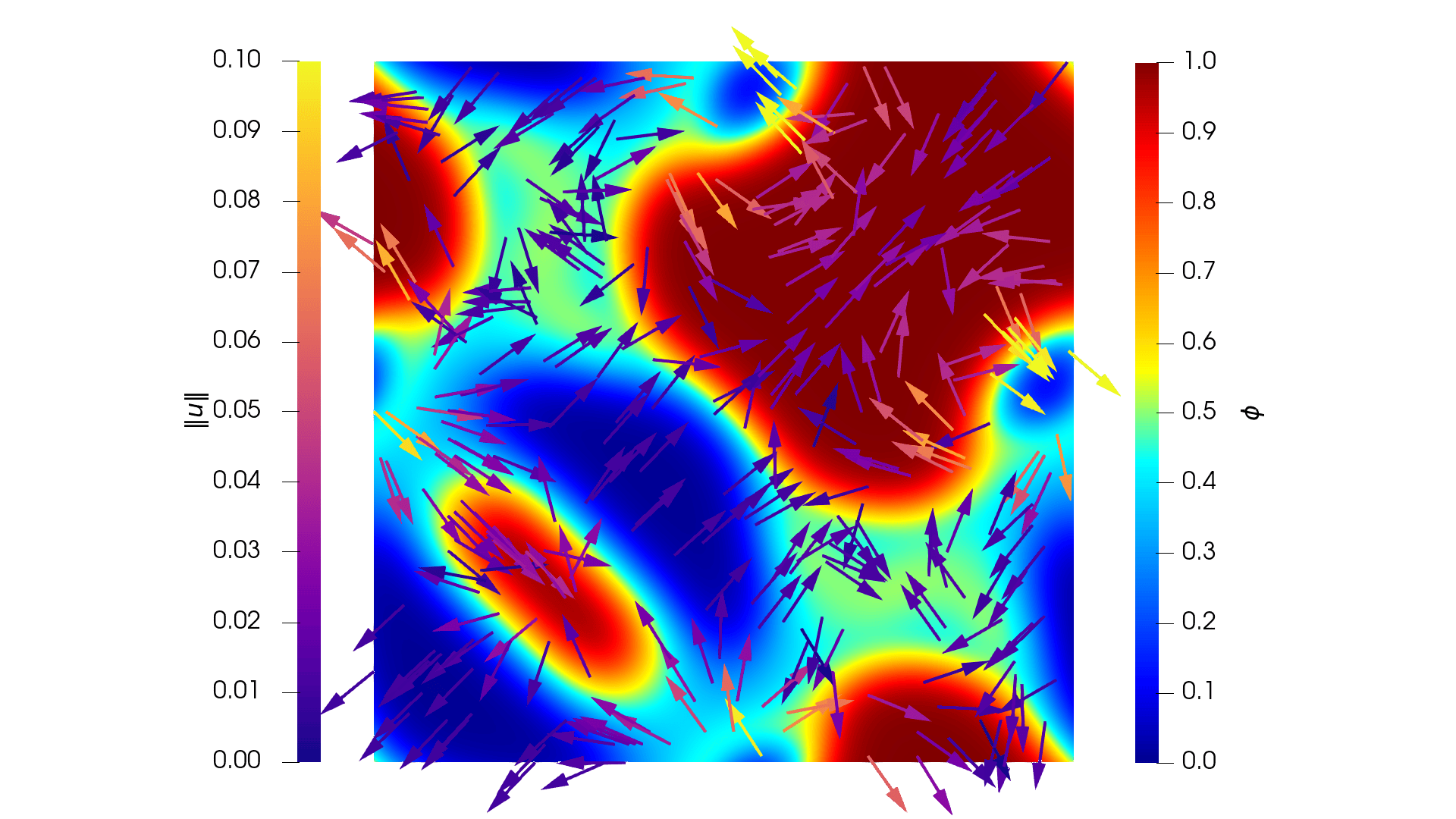} 
                &
                \includegraphics[trim={17cm 0cm 17cm 0cm},clip,scale=0.09]{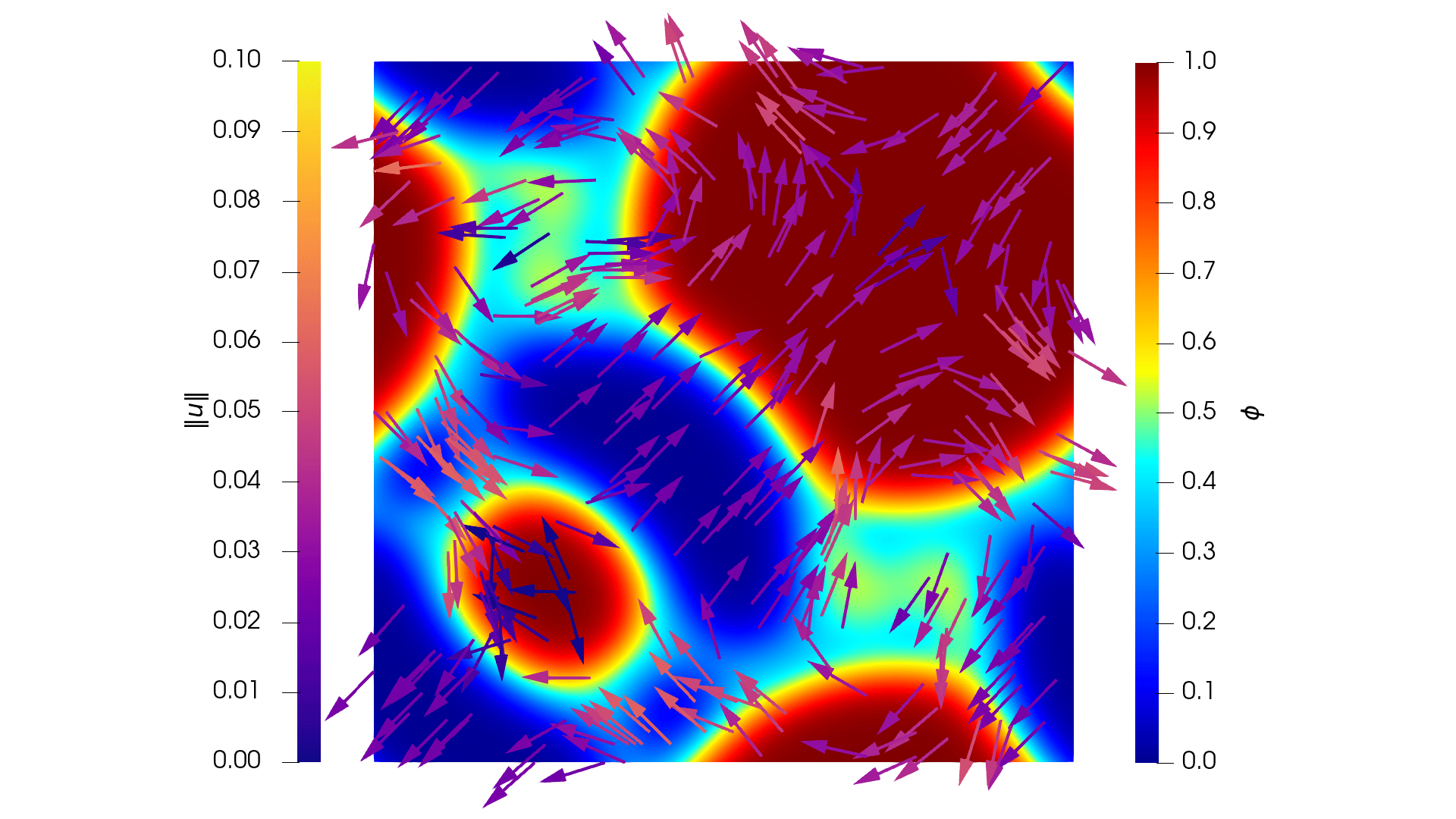}
                & 
                \includegraphics[trim={52.5cm 0cm 8.5cm 0cm},clip,scale=0.09]{Bilder/Melt/With_Flow/Phi_U_0.png}\\[-1em]
                \includegraphics[trim={8.5cm 0cm 52.2cm 0cm},clip,scale=0.09]{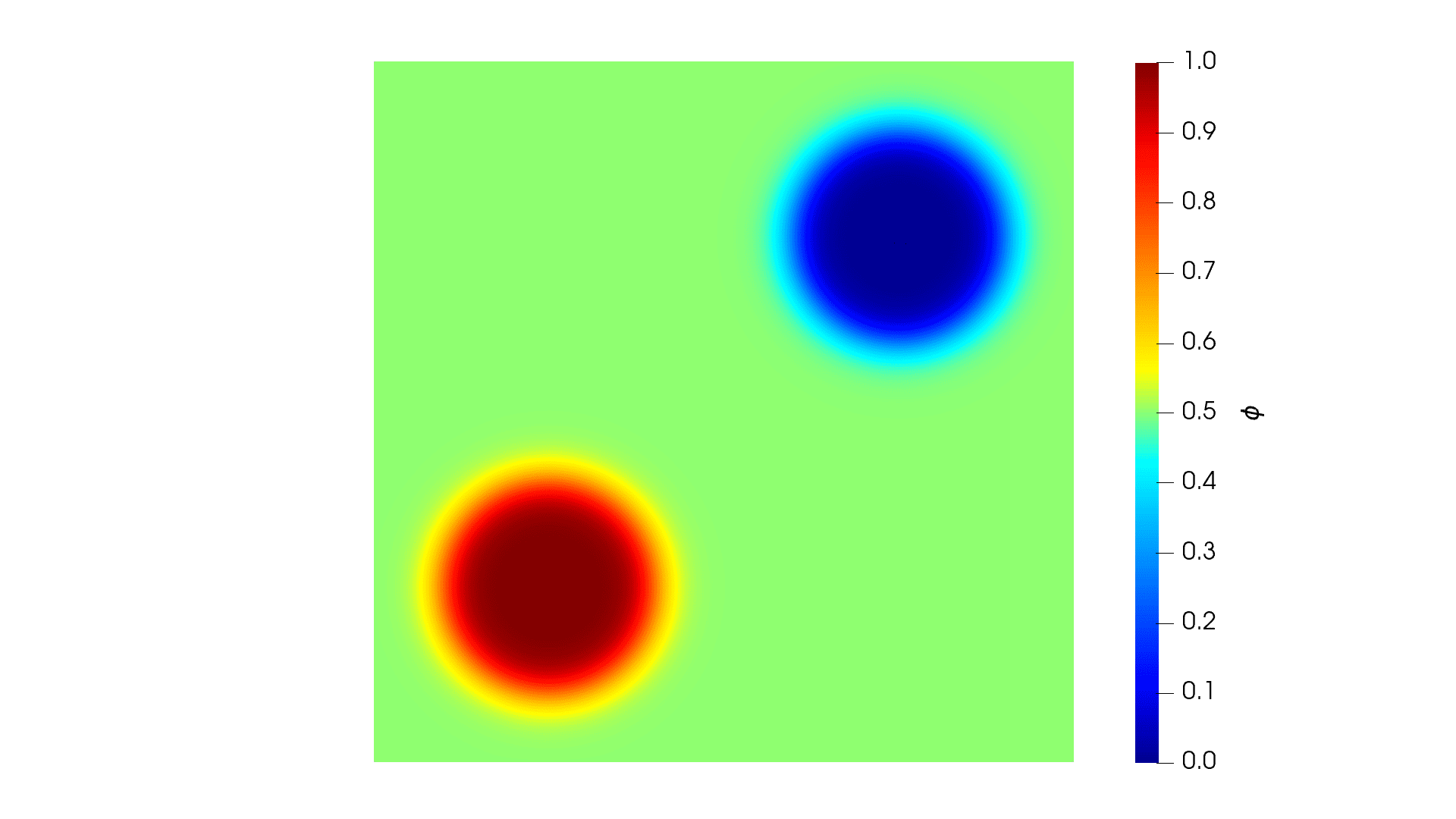}
                &
                \includegraphics[trim={17cm 0cm 17cm 0cm},clip,scale=0.09]{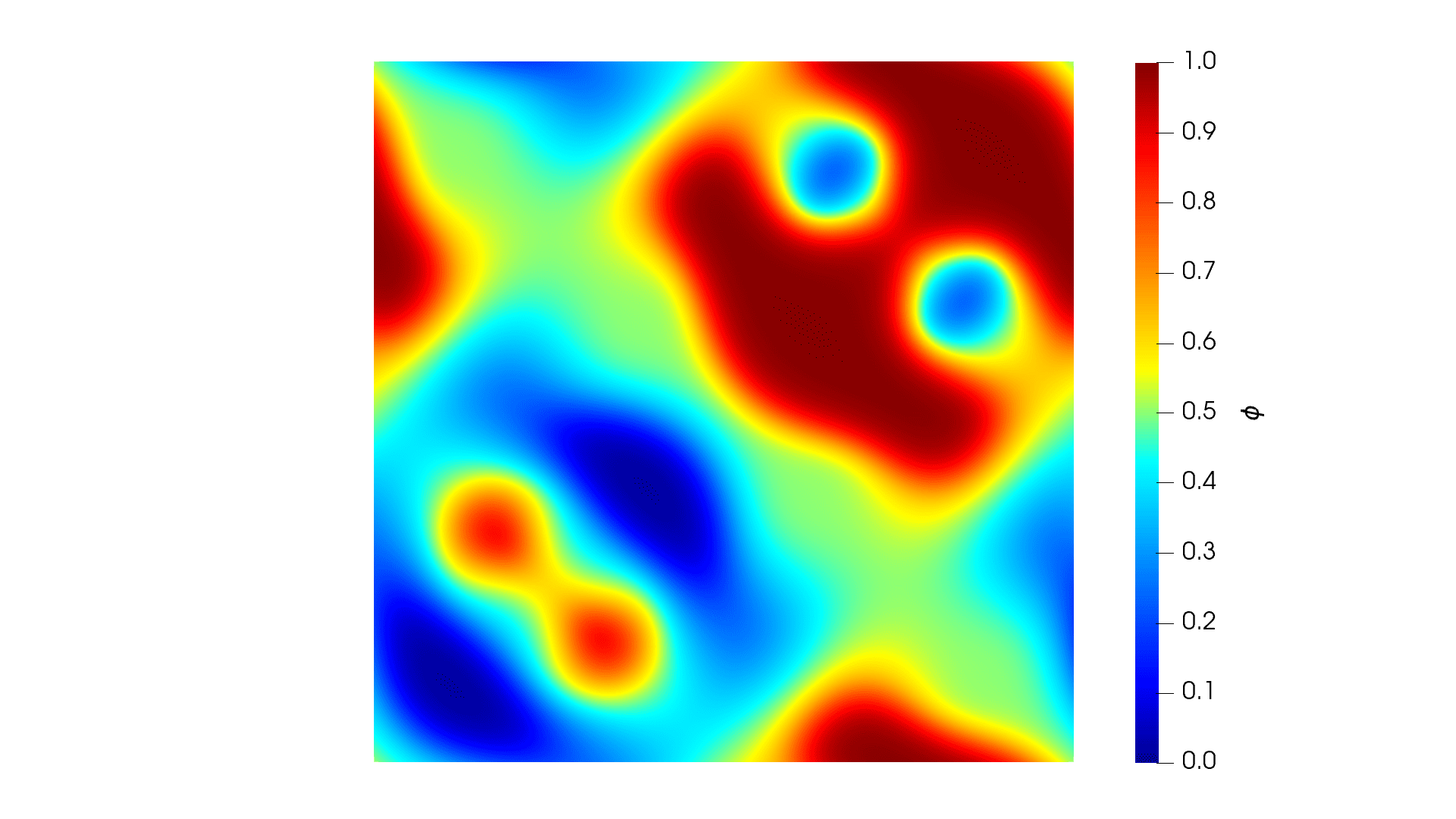} 
                &
                \includegraphics[trim={17cm 0cm 17cm 0cm},clip,scale=0.09]{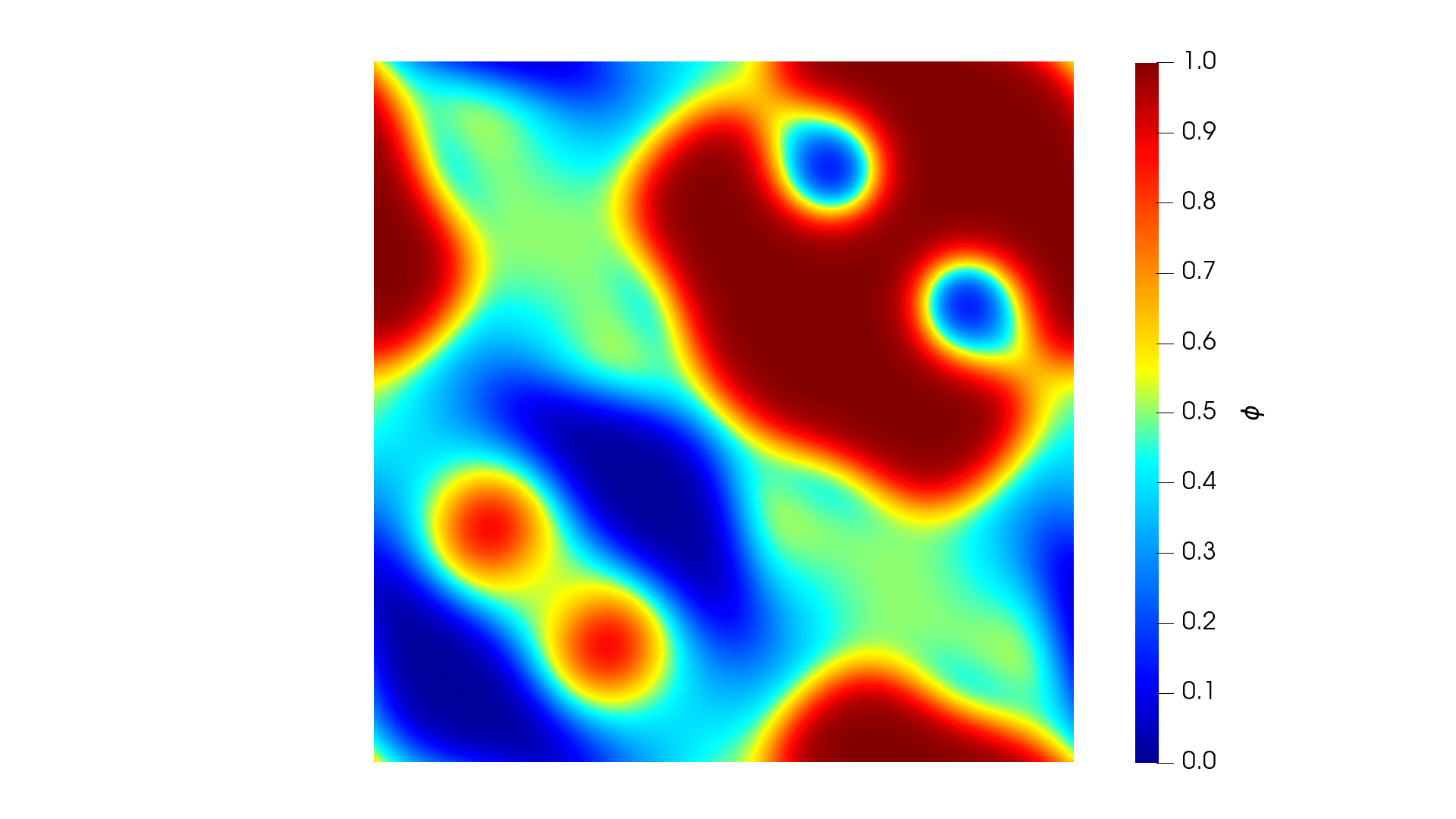}
                &
                \includegraphics[trim={17cm 0cm 17cm 0cm},clip,scale=0.09]{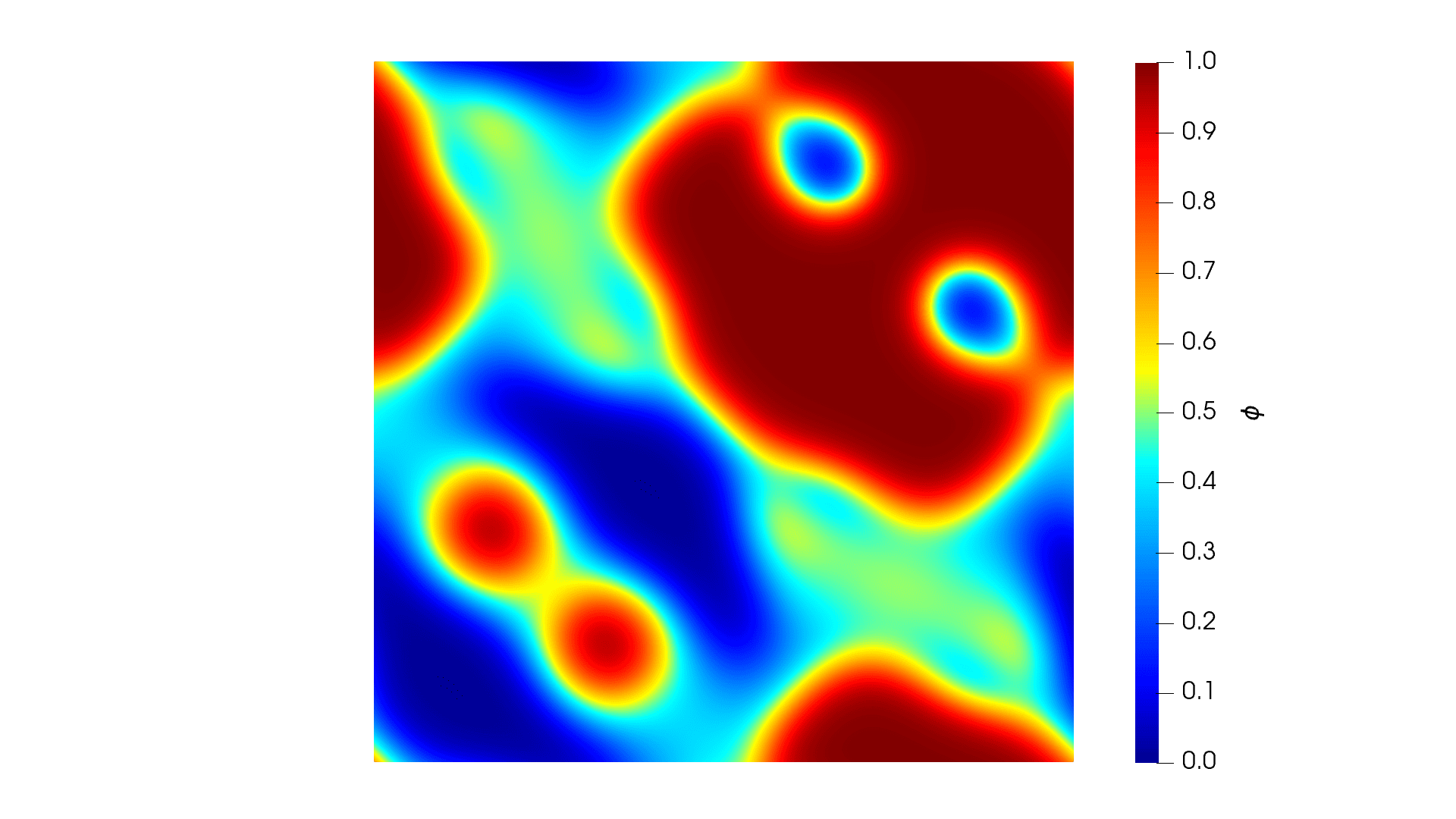} 
                &
                \includegraphics[trim={17cm 0cm 17cm 0cm},clip,scale=0.09]{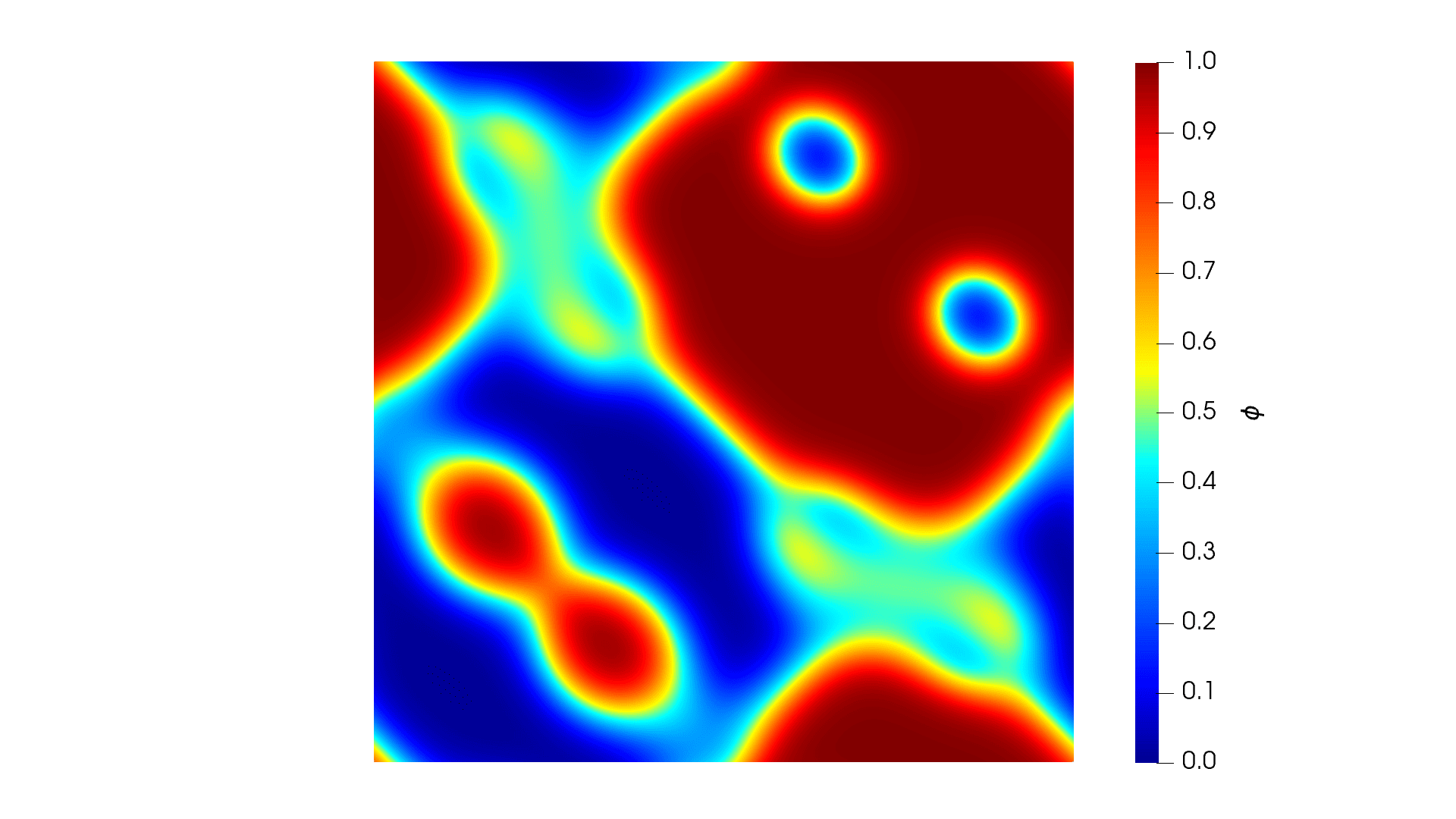}
                & 
                \includegraphics[trim={52.5cm 0cm 8.5cm 0cm},clip,scale=0.09]{Bilder/Melt/Without_Flow/Phi_0.png}
                \\[-0.5em]
                \\
            \end{tabular}
            \caption{Snapshots of the temporal evolution of $\phi_h$ together with $\u$ (top) compared with snapshots of $\phi_h$ from the simulation without consideration of velocity (bottom).}
            \label{fig:ex_melt_phi}
        \end{figure}
        
        Snapshots of the simulation can be seen in \cref{fig:ex_melt_phi,fig:ex_melt_theta} showing the phase variable $\phi_h$ together with the velocity field $\u_h$ as well as the temperature $\vtheta_h$ compared with snapshots of the same simulation without consideration of a velocity field respectively. For the simulation with velocity (top), we see that because of the initial temperature, the upper right corner starts melting on the diagonal, while staying solid on the regions with low initial temperature, leaving two solid grains in the otherwise molten region. Same happens in the opposite corner of the domain, creating two off diagonal melt pools in an otherwise solid region. Similar behavior can be seen without consideration of velocity (bottom), but as time evolves, for the simulation with velocity, because of the different viscosities of the liquid and solid phase, the grains in the upper right corner flow to the sides of the surrounding melt pool introducing some melt flow, while in the lower left corner the melt pools combine in the middle to one bigger melt pool. This shows the influence of the Navier-Stokes equation, since in the simulation without a velocity field, we do not see the flow effects for the solid grains in the liquid melt pool and also the two melt pools in the solid region do not combine because of lacking flow.
        Because of that, we also see some differences in the temperature distribution. Since the colder solid grains in the top right corner leave the melt pool under consideration of velocity, they dissipate away the associated cold spots, while remaining stationary otherwise. Also for the lower right corner the center of the melt pool is more concentrated showing again a corresponding difference in temperature. Note that the temperature in the beginning starts decreasing in the melting regions and rises for the solidifying parts. This is due to the latent heat storing additional energy in the melt phase, therefore releasing energy in form of temperature while solidifying and vice versa.
        \begin{figure}[htbp!]
            \centering
            \hspace*{-2em}
            \begin{tabular}{c@{}c@{}c@{}c@{}c@{}c@{}}
                & $t=0.5$ & $t=2$ & $t=3$ & $t=5$ & 
                \\[-0.5em]
                \includegraphics[trim={8.5cm 0cm 53cm 0cm},clip,scale=0.09]{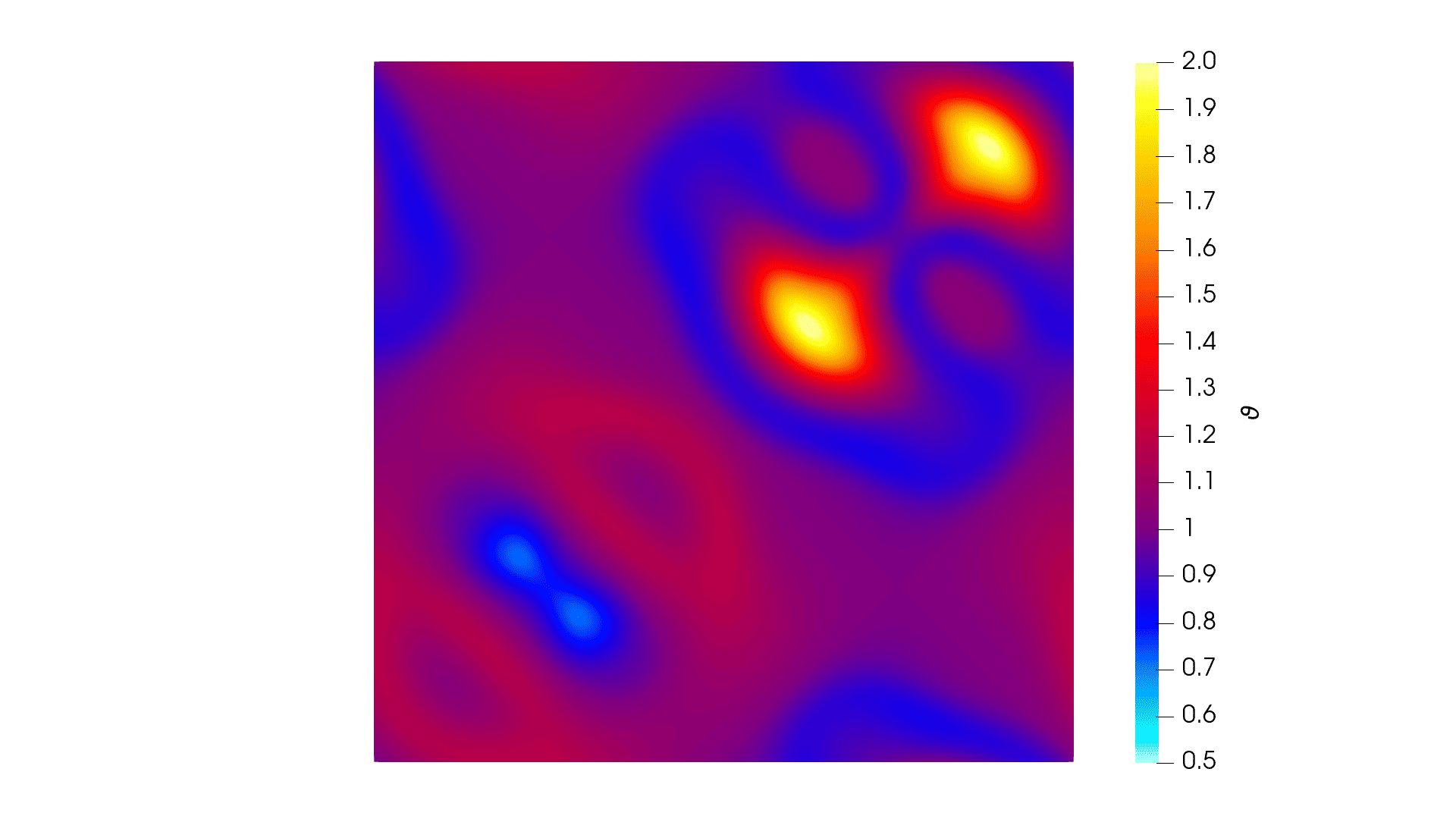}
                &
                \includegraphics[trim={17cm 0cm 17cm 0cm},clip,scale=0.09]{Bilder/Melt/With_Flow/Theta_50.png} 
                &
                \includegraphics[trim={17cm 0cm 17cm 0cm},clip,scale=0.09]{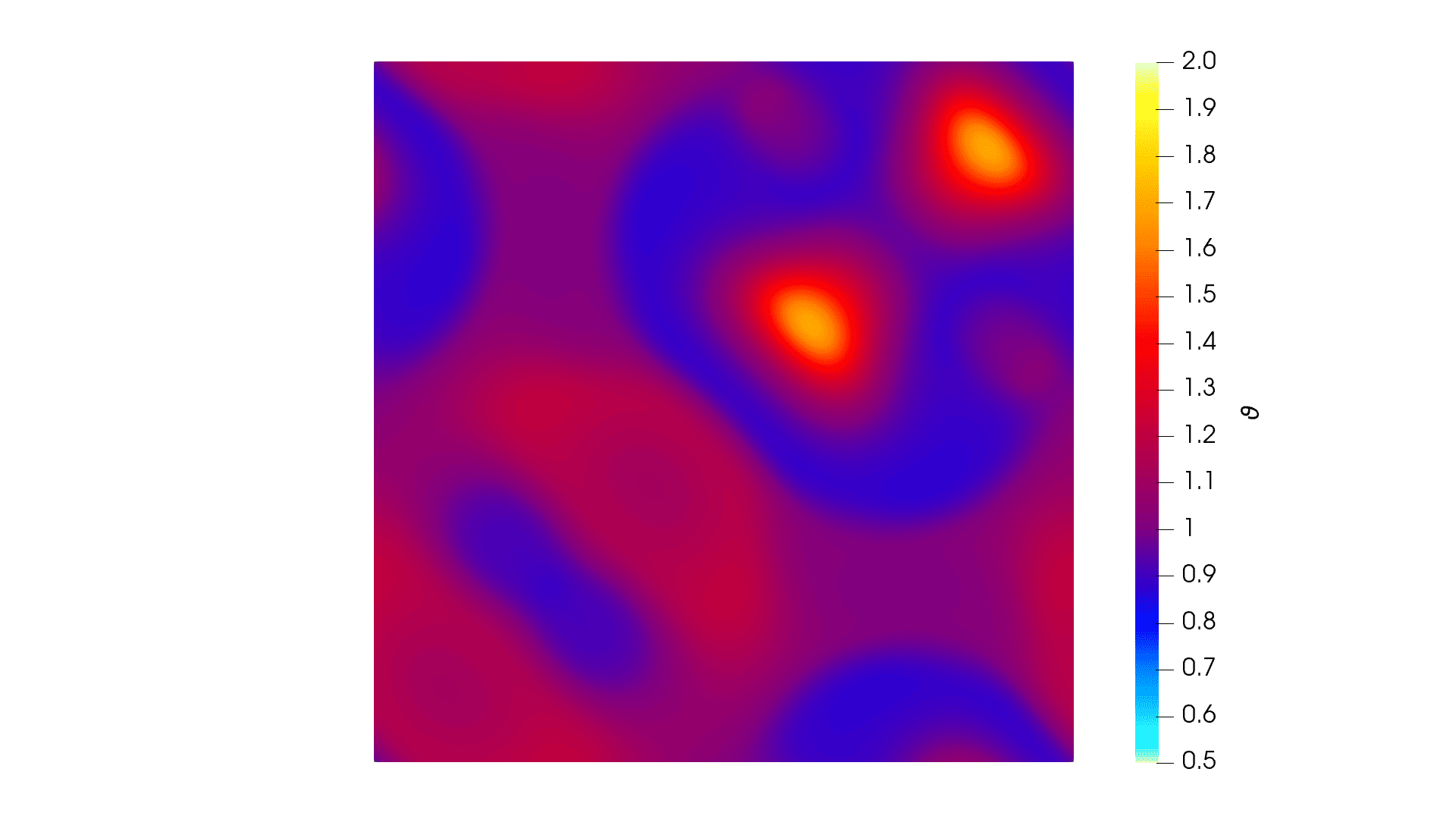}
                &
                \includegraphics[trim={17cm 0cm 17cm 0cm},clip,scale=0.09]{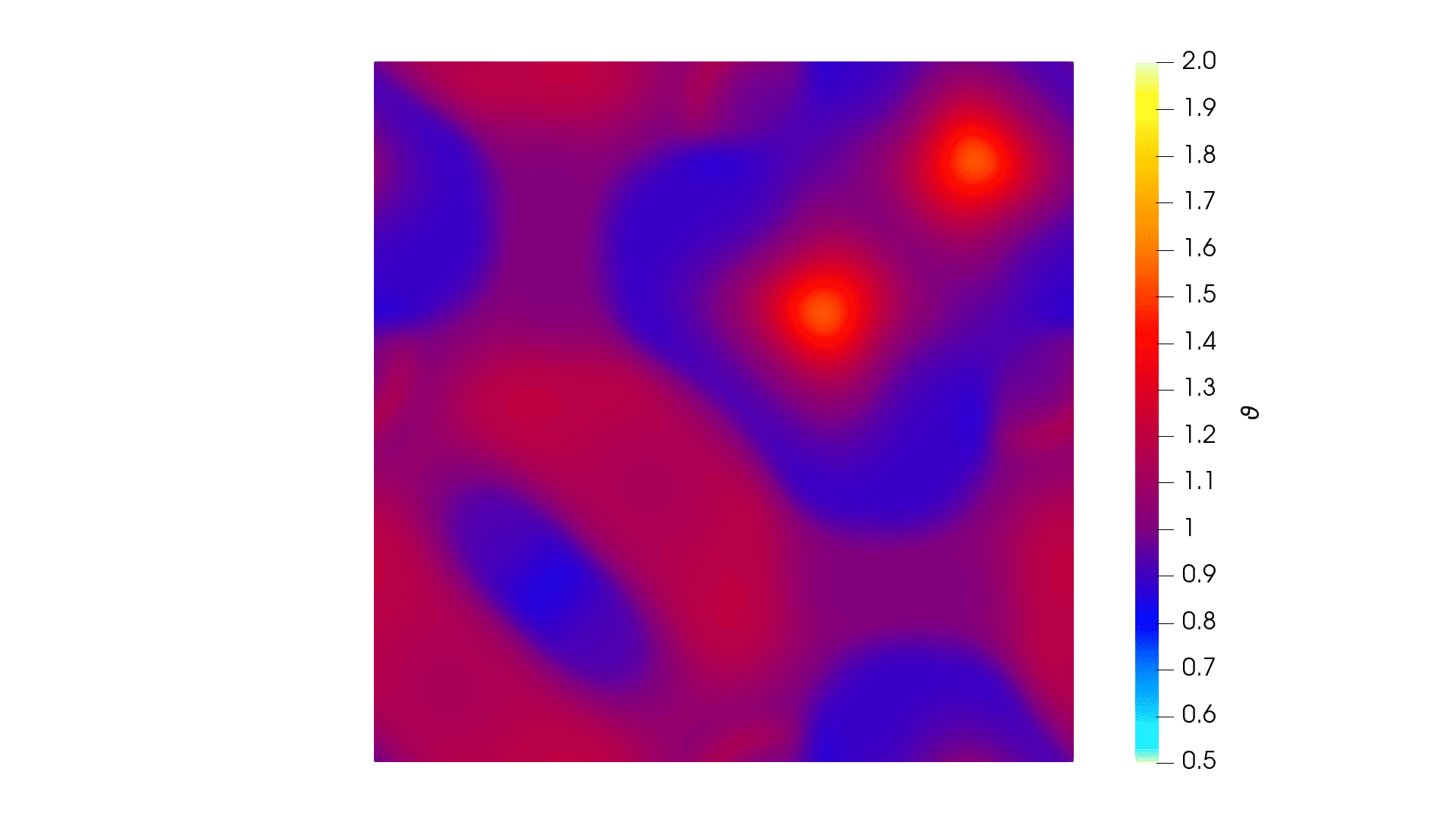} 
                &
                \includegraphics[trim={17cm 0cm 17cm 0cm},clip,scale=0.09]{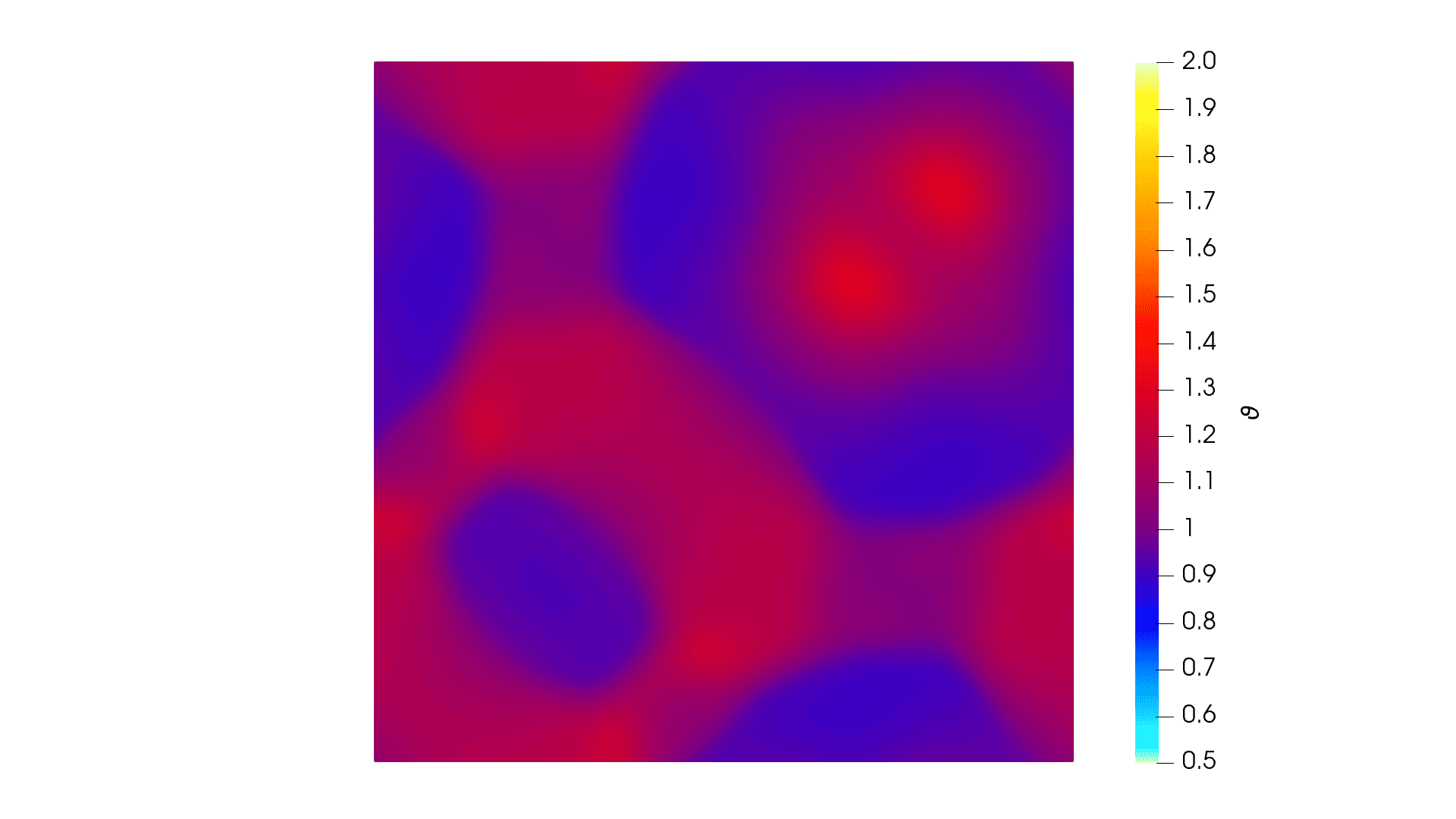}
                & 
                \includegraphics[trim={52.5cm 0cm 8.5cm 0cm},clip,scale=0.09]{Bilder/Melt/With_Flow/Theta_50.png}\\[-1em]
                \includegraphics[trim={8.5cm 0cm 53cm 0cm},clip,scale=0.09]{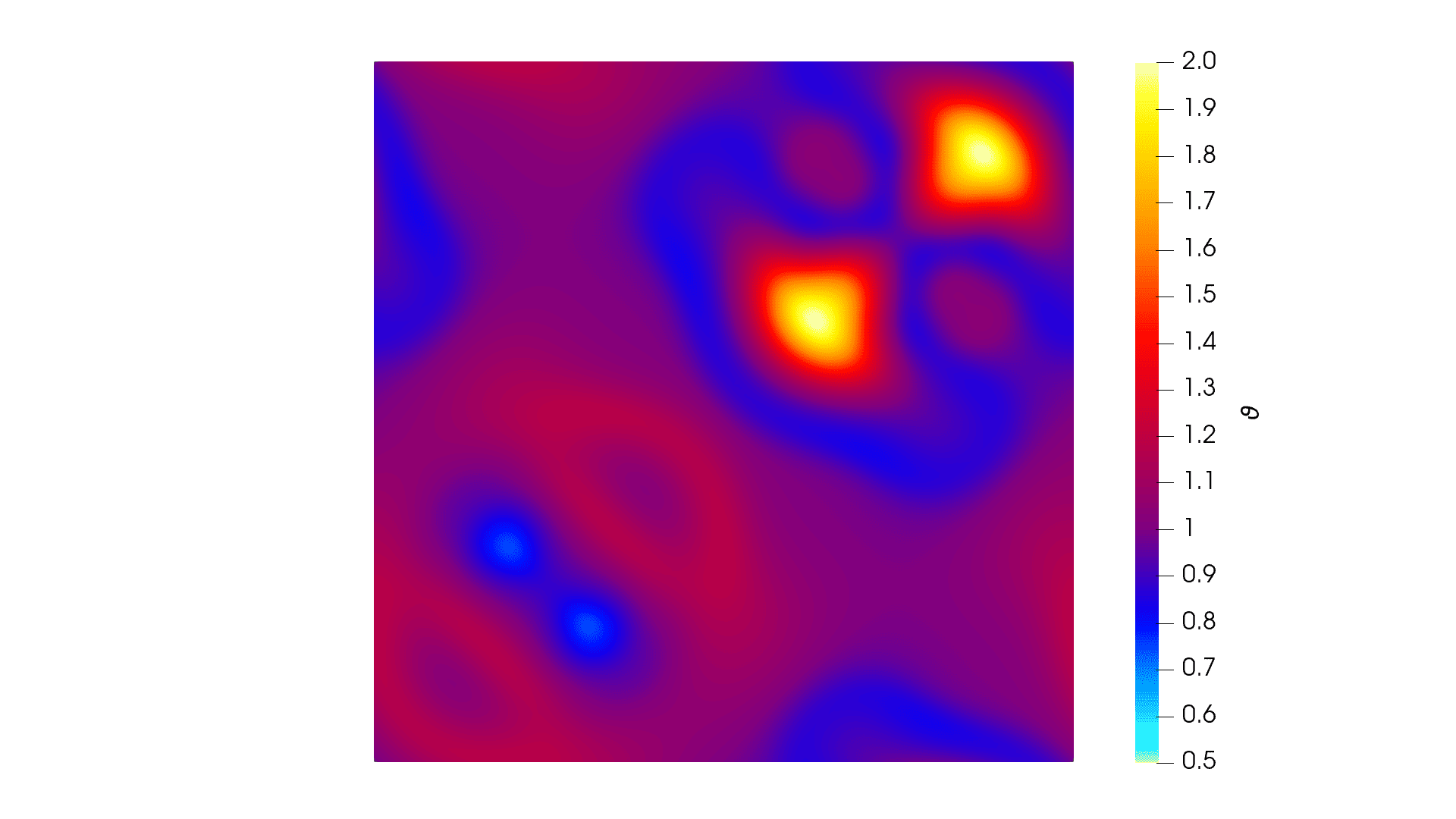}
                &
                \includegraphics[trim={17cm 0cm 17cm 0cm},clip,scale=0.09]{Bilder/Melt/Without_Flow/Theta_50.png} 
                &
                \includegraphics[trim={17cm 0cm 17cm 0cm},clip,scale=0.09]{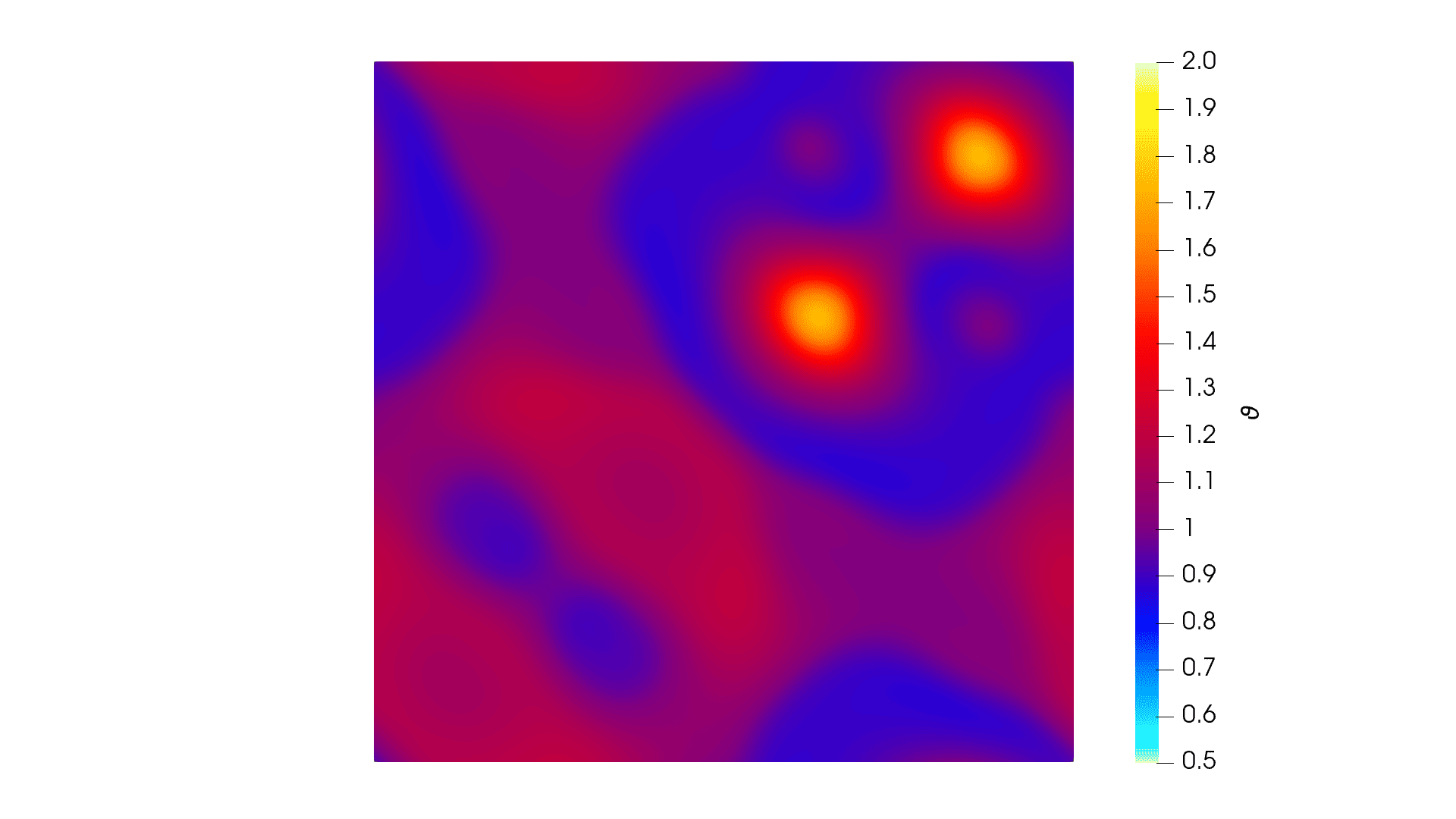}
                &
                \includegraphics[trim={17cm 0cm 17cm 0cm},clip,scale=0.09]{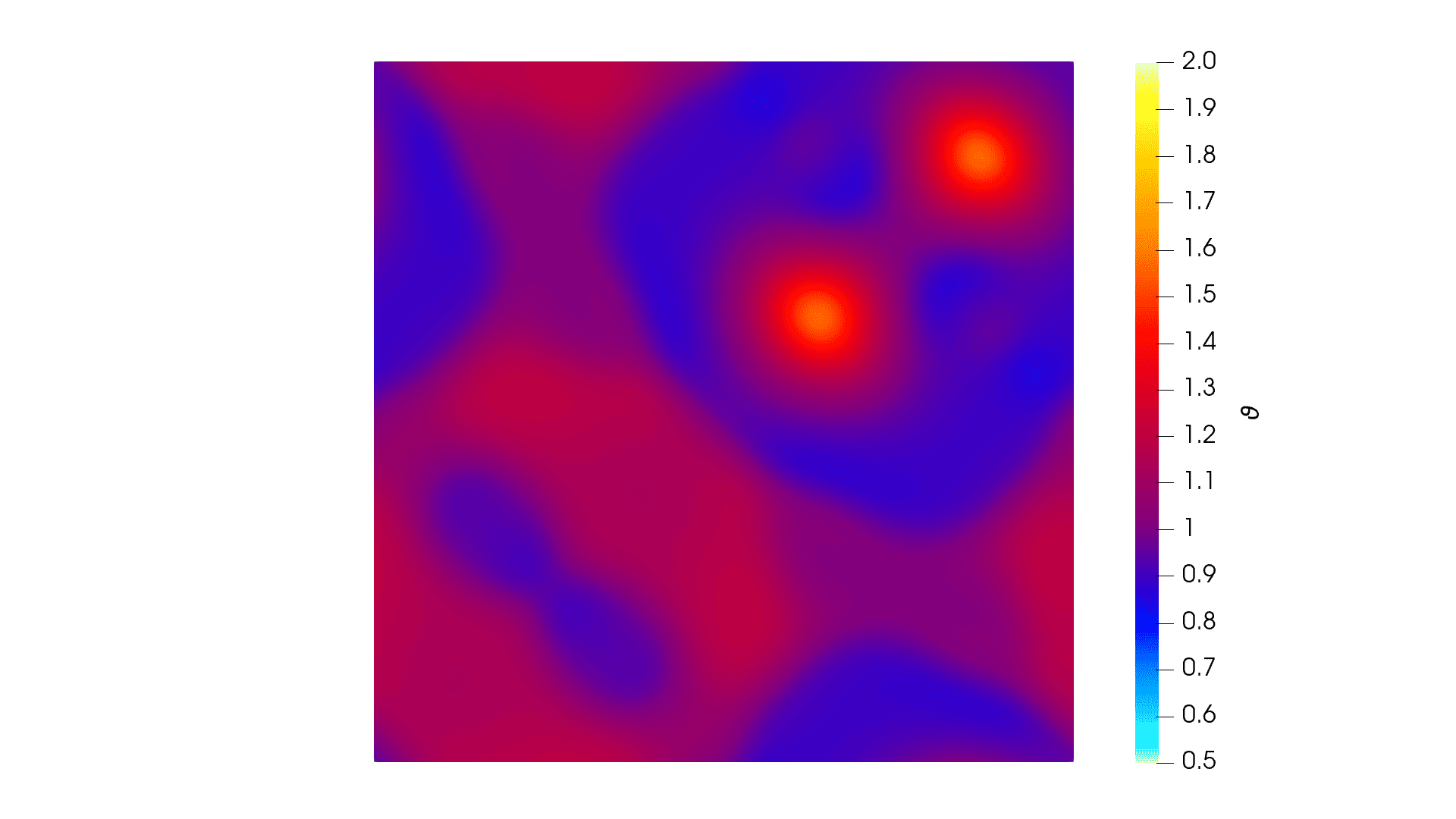} 
                &
                \includegraphics[trim={17cm 0cm 17cm 0cm},clip,scale=0.09]{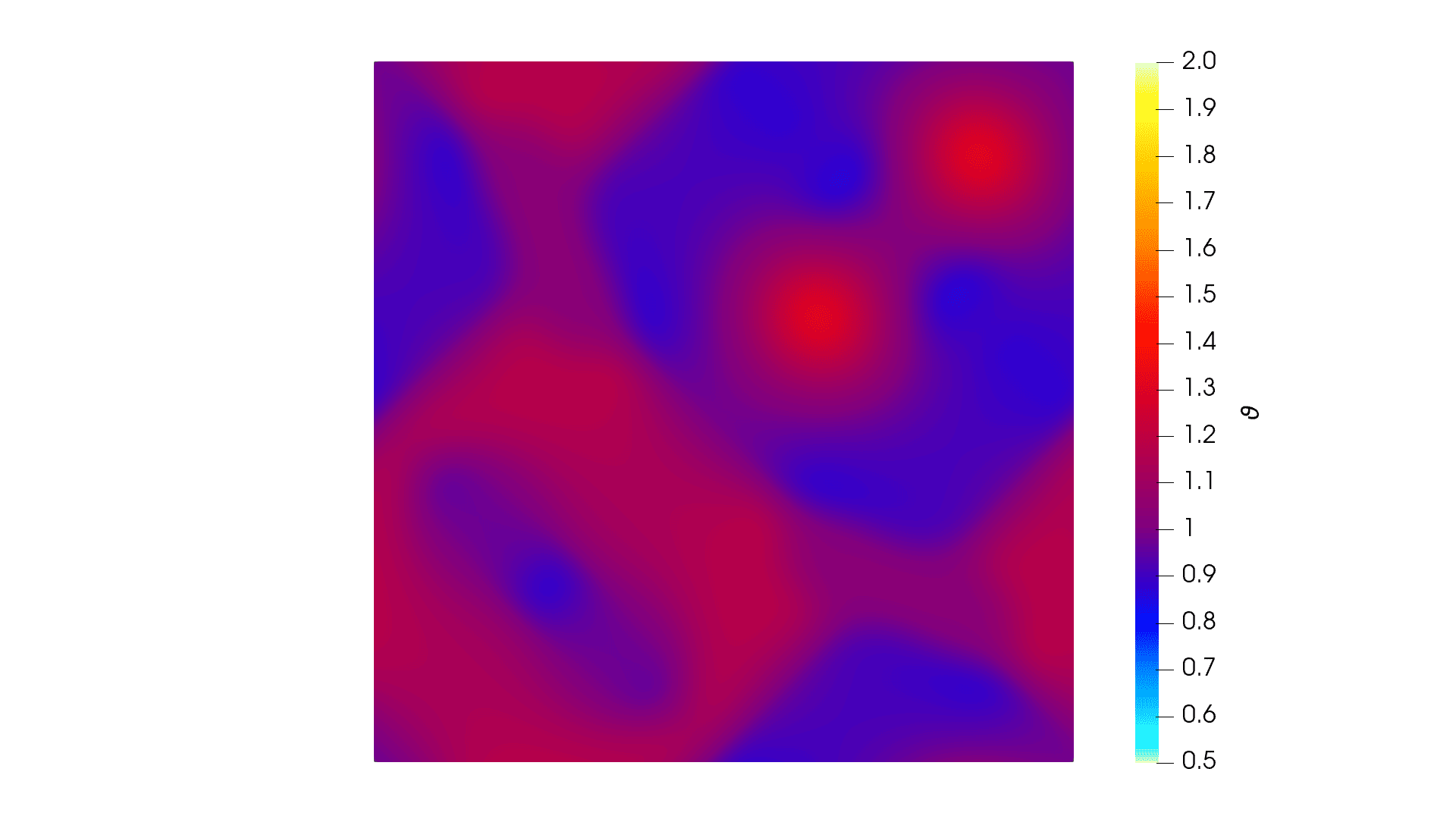}
                & 
                \includegraphics[trim={52.5cm 0cm 8.5cm 0cm},clip,scale=0.09]{Bilder/Melt/Without_Flow/Theta_50.png}
                \\[-0.5em]
                \\
            \end{tabular}
            \caption{Snapshots of the temperature $\vtheta_h$ with (top) and without (bottom) consideration of velocity.}
            \label{fig:ex_melt_theta}
        \end{figure}
        In \cref{fig:ex_melt_struct} the temporal evolution of the integral over the entropy density $s_h$ and the total energy density $e_{\mathrm{tot},h}$, for both simulations with and without consideration of velocity, are shown respectively. Due to periodic boundary conditions and lack of external forces and sources, the system is isolated showing a small numerical decrease for the total energy and an increasing entropy as predicted by \cref{thm:discstruc}. This is true for both simulations, since we used the same structure preserving algorithm in both cases. At approximately half the simulation time a notable difference between the simulation with and without consideration of velocity can be seen,  corresponding to the above mentioned flow effects.
        \begin{figure}[htbp!]
            \centering
            \begin{tabular}{c@{}c@{}}
                \includegraphics[width=0.47\linewidth]{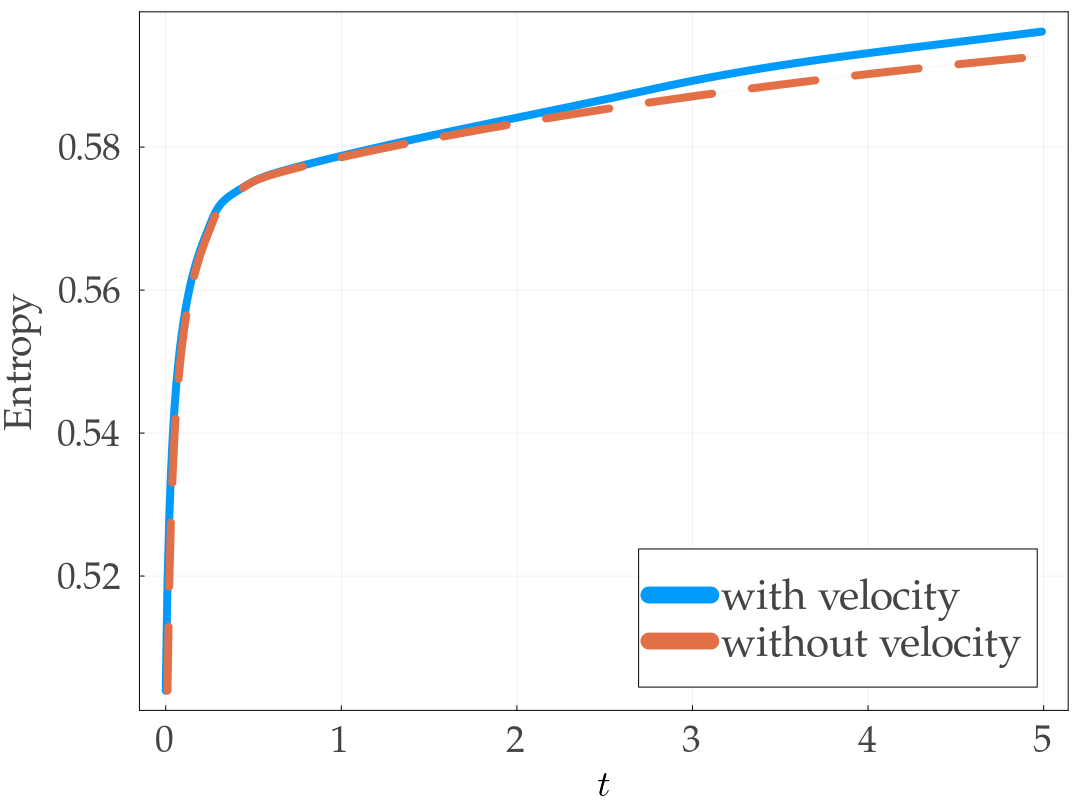}
                &
                \includegraphics[width=0.47\linewidth]{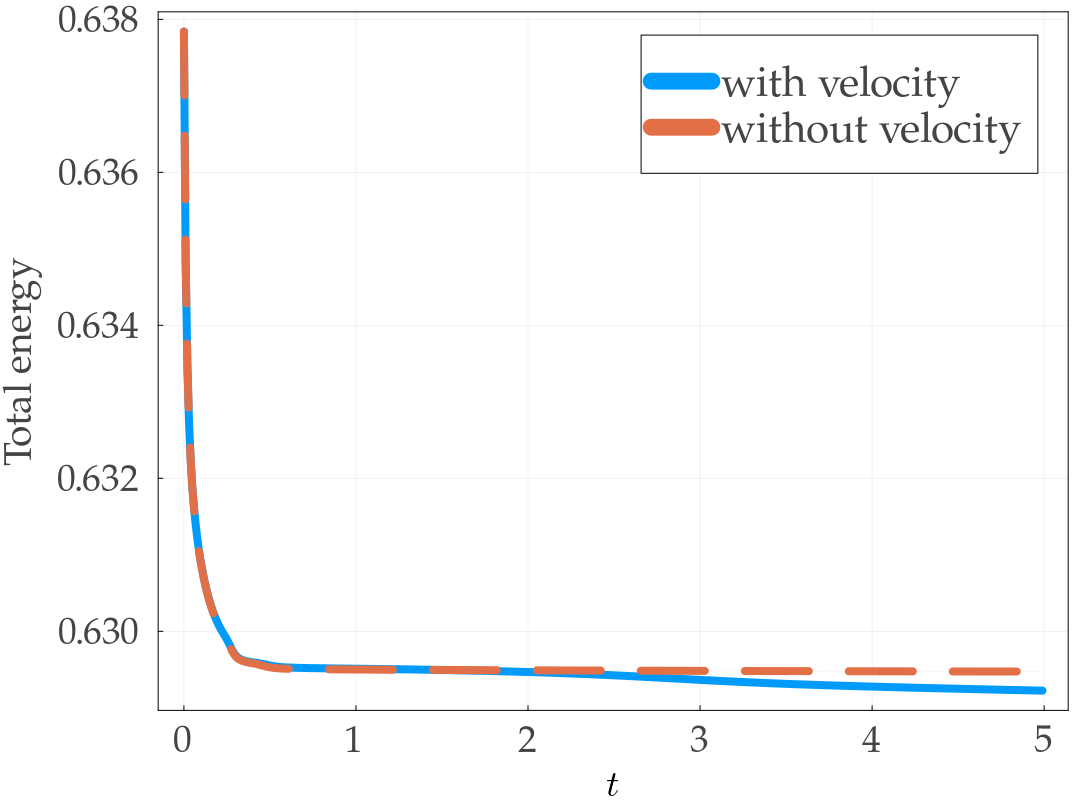}
            \end{tabular}
            \caption{Temporal evolution of the integrated entropy density $s_h$ (left) and integrated total energy density $e_{\mathrm{tot},h}$ (right)}
            \label{fig:ex_melt_struct}
        \end{figure}
        
    \subsection{Laser melt example}
    
        For the second example we simulate a Laser melting metal by introducing a heat source $Q$ of the form
        \begin{align*}
            Q = \begin{cases}
                200\cdot Q_\mathrm{lsr}(t)&,t\leq1\\
                Q_\mathrm{cool}&,1.5\leq t\leq2\\
                0&,\text{else}
            \end{cases}
        \end{align*}
        with
        \begin{align*}
            Q_\mathrm{lsr}(t)&=\operatorname{trans}_{[0,1]}\left(\frac{\sqrt{2.5(x-\mathbf{p}_1(t))^2+2.5(y-\mathbf{p}_2(t))^2}-0.2)}{0.1}\right), \\
            \mathbf{p}(t) &= 5\cdot(0.3\cos(2\pi t)+0.5,0.3\sin(2\pi t)+0.5)^\top,
        \end{align*}
        being a circle of diameter $r\approx0.25$ with a peak heat in the center of $200$ moving on a circular path $\mathbf{p}$ over the domain $\Omega=[0,5]^2$. After some time we turn of the laser and later on we cool the whole domain via $Q_\mathrm{cool}=-1$ so that the melt path re-solidifies again. Afterwards to external source is turned off. For the gradient contribution we use again $G(\nabla\phi)=\tfrac{\gamma^2}{2}\snorm{\nabla\phi}^2$, while the remaining parameters are set as\\
        \begin{minipage}[t]{0.32\textwidth}
            \vspace{0.1cm}
            \begin{itemize}
                \item $\M=100$
                \item $\K=2$
                \item $\tau=0.001$
                \item $\b=\mathbf{0}$
            \end{itemize}
            \vspace{0.2cm}
        \end{minipage}
        \begin{minipage}[t]{0.32\textwidth}
            \vspace{0.1cm}
            \begin{itemize}
                \item $\gamma=0.05$
                \item $\eta_s=1$
                \item $\eta_l=0.001$
                \item $\vtheta_m=1$
            \end{itemize}
            \vspace{0.2cm}
        \end{minipage}
        \begin{minipage}[t]{0.32\textwidth}
            \vspace{0.1cm}
            \begin{itemize}
                \item $\mathcal{L}=1$
                \item $H_\mathrm{pt}=1$
                \item $H_\mathrm{cf}=0.1$
                \item $C_\mathrm{vsh}=1$
            \end{itemize}
            \vspace{0.2cm}
        \end{minipage}
        As for the initial values we consider a preheated solid with an already molten starting point for the laser and no flow, given by
        \begin{align*}
            \phi_0(x,y)&:=\operatorname{trans}_{[0,1]}\left(\frac{\sqrt{5(x-4)^2+5(y-2.5)^2}-0.2)}{\sqrt{0.05}}\right),\\\vtheta_0(x,y)&:=\vtheta_m(0.95+0.55\phi_0),\\
            \u(x,y)&:=(0,0)^\top.
        \end{align*}
        as depicted in \cref{fig:ex_laser_init}. We again only consider \cref{bc:periodic}, simulating a infinite solid metal surface.
        \begin{figure}[htbp!]
            \centering
            \includegraphics[trim={16cm 0cm 9cm 0cm},clip,scale=0.14]{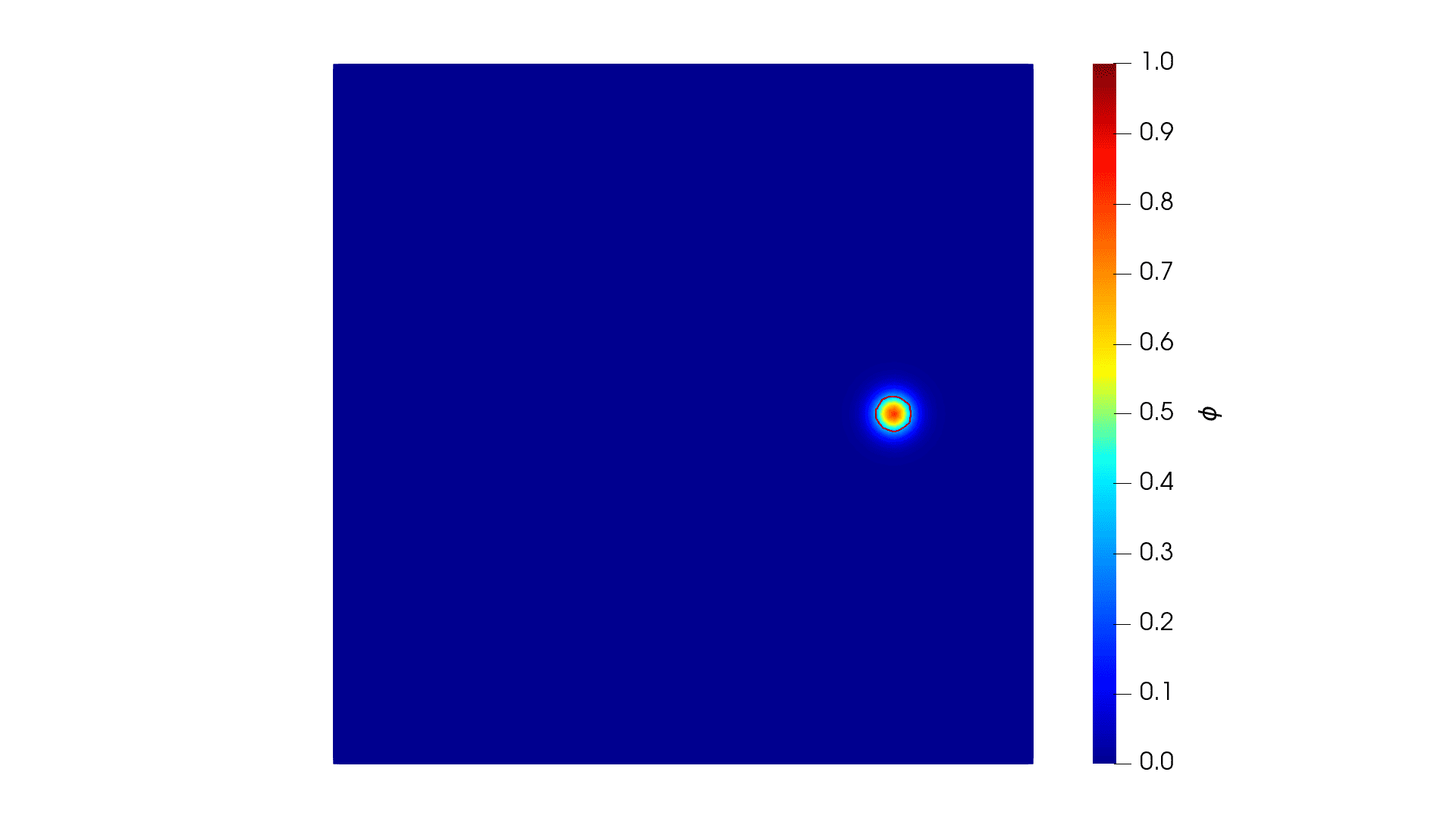}
            \includegraphics[trim={16cm 0cm 9cm 0cm},clip,scale=0.14]{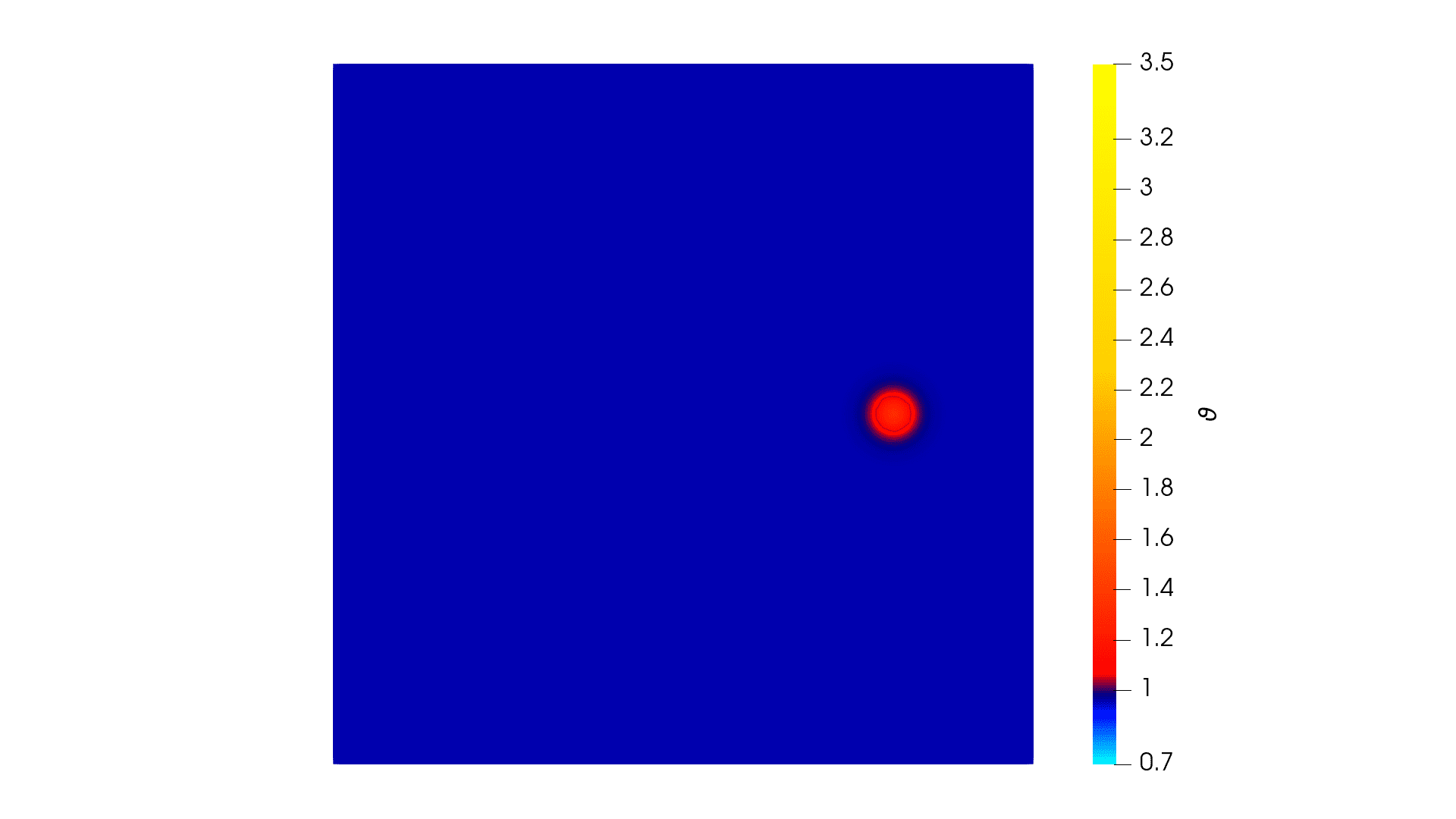} 
            \caption{Initial condition for second example of the phase variable $\phi$ (left) and the temperature $\vtheta$ (right).}
            \label{fig:ex_laser_init}
        \end{figure}
        
        The snapshots of the simulation can be found in \cref{fig:ex_laser_phi} and \cref{fig:ex_laser_theta}, showing the phase variable $\phi_h$ together with the melt flow velocity $\u_h$ as well as the temperature $\vtheta_h$, respectively. Also the contour of the external heat source $Q_\mathrm{lsr}$, representing the laser, is outlined in both figures in red.
        \begin{figure}[htbp!]
            \centering
            \hspace*{-1em}
            \begin{tabular}{c@{}c@{}c@{}c@{}c@{}}
                \multirow{4}{*}{\includegraphics[trim={6cm 0cm 54cm 0cm},clip,scale=0.22]{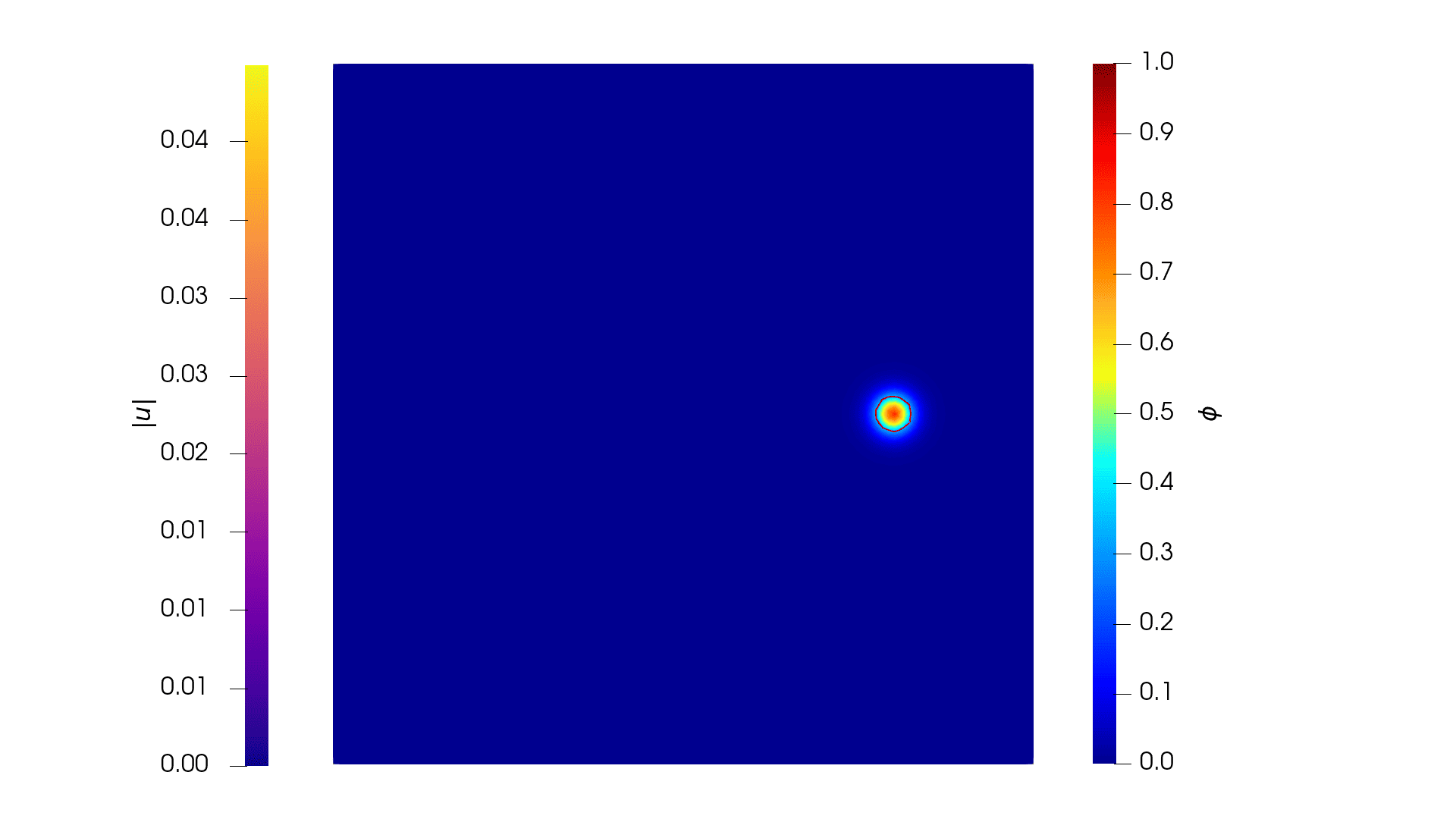}}
                & $t=0.1$ & $t=0.5$ & $t=1$ & 
                \multirow{4}{*}{\includegraphics[trim={51cm 0cm 9cm 0cm},clip,scale=0.22]{Bilder/Laser/Phi_u_0.png}}
                \\[-0.5em]
                &
                \includegraphics[trim={17cm 0cm 17cm 0cm},clip,scale=0.1]{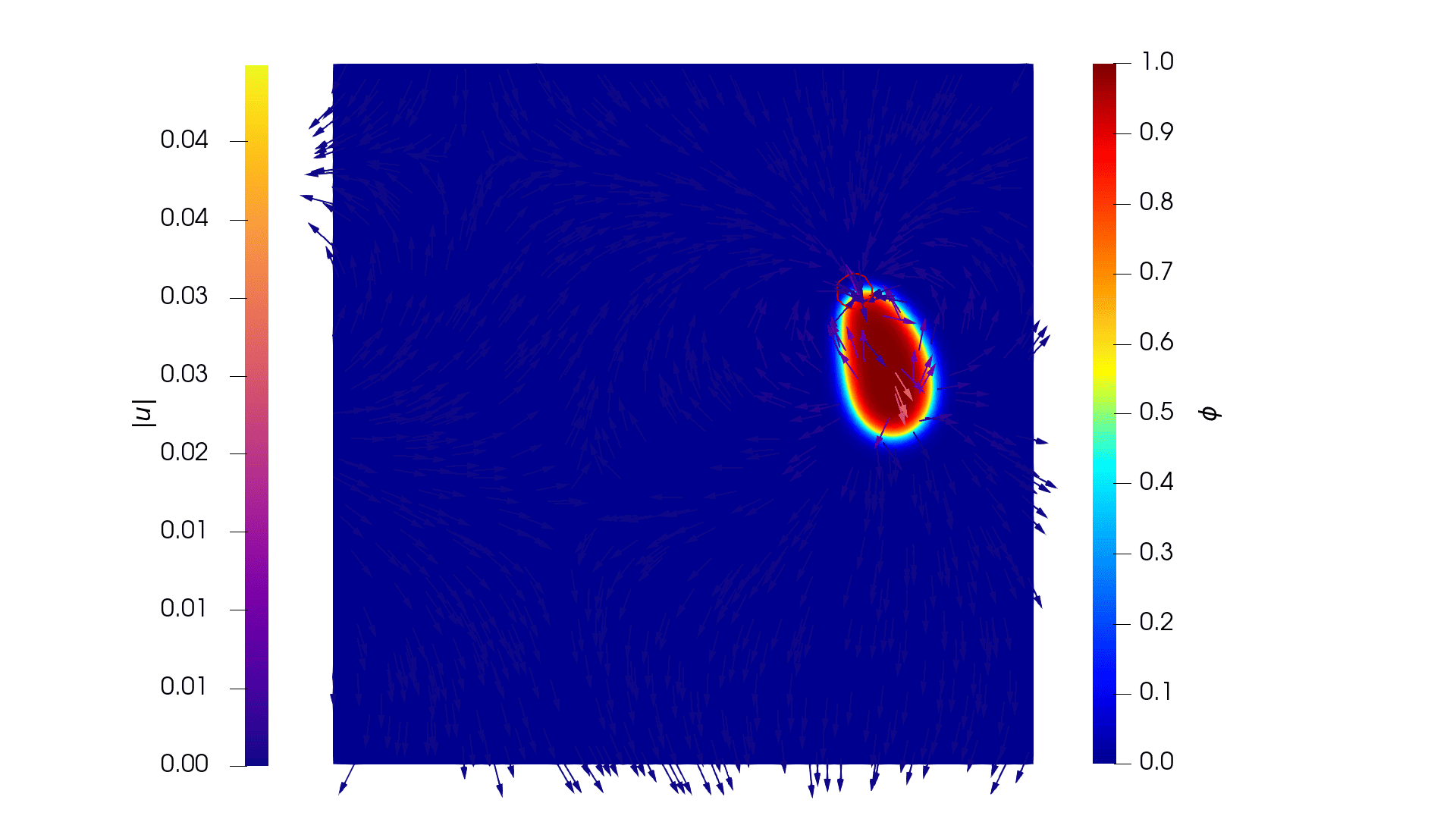} 
                &
                \includegraphics[trim={17cm 0cm 17cm 0cm},clip,scale=0.1]{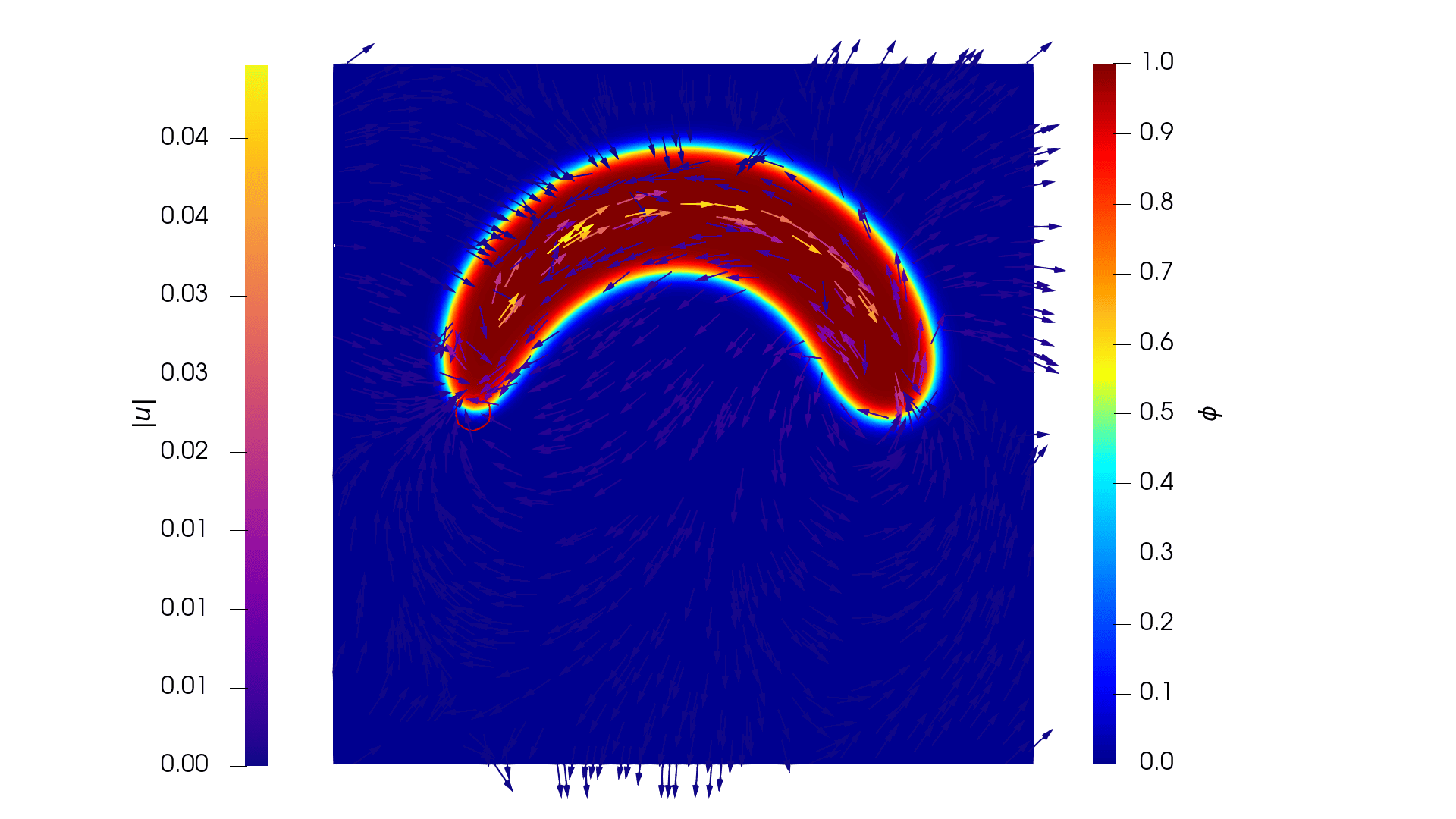}
                &
                \includegraphics[trim={17cm 0cm 17cm 0cm},clip,scale=0.1]{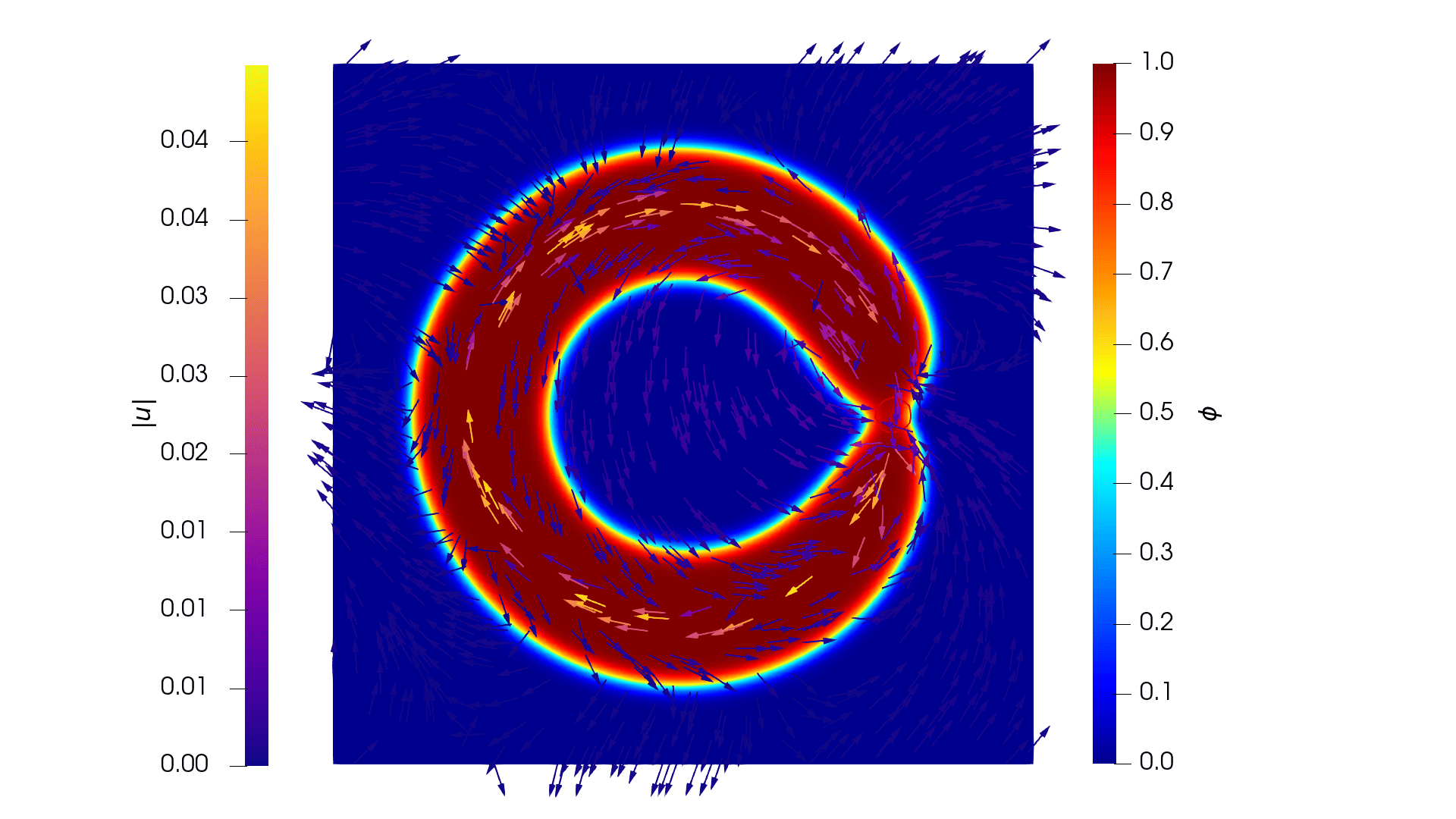}
                & 
                \\[-0.5em]& $t=1.5$ & $t=2$ & $t=2.25$ &\\[-0.5em]
                &
                \includegraphics[trim={17cm 0cm 17cm 0cm},clip,scale=0.1]{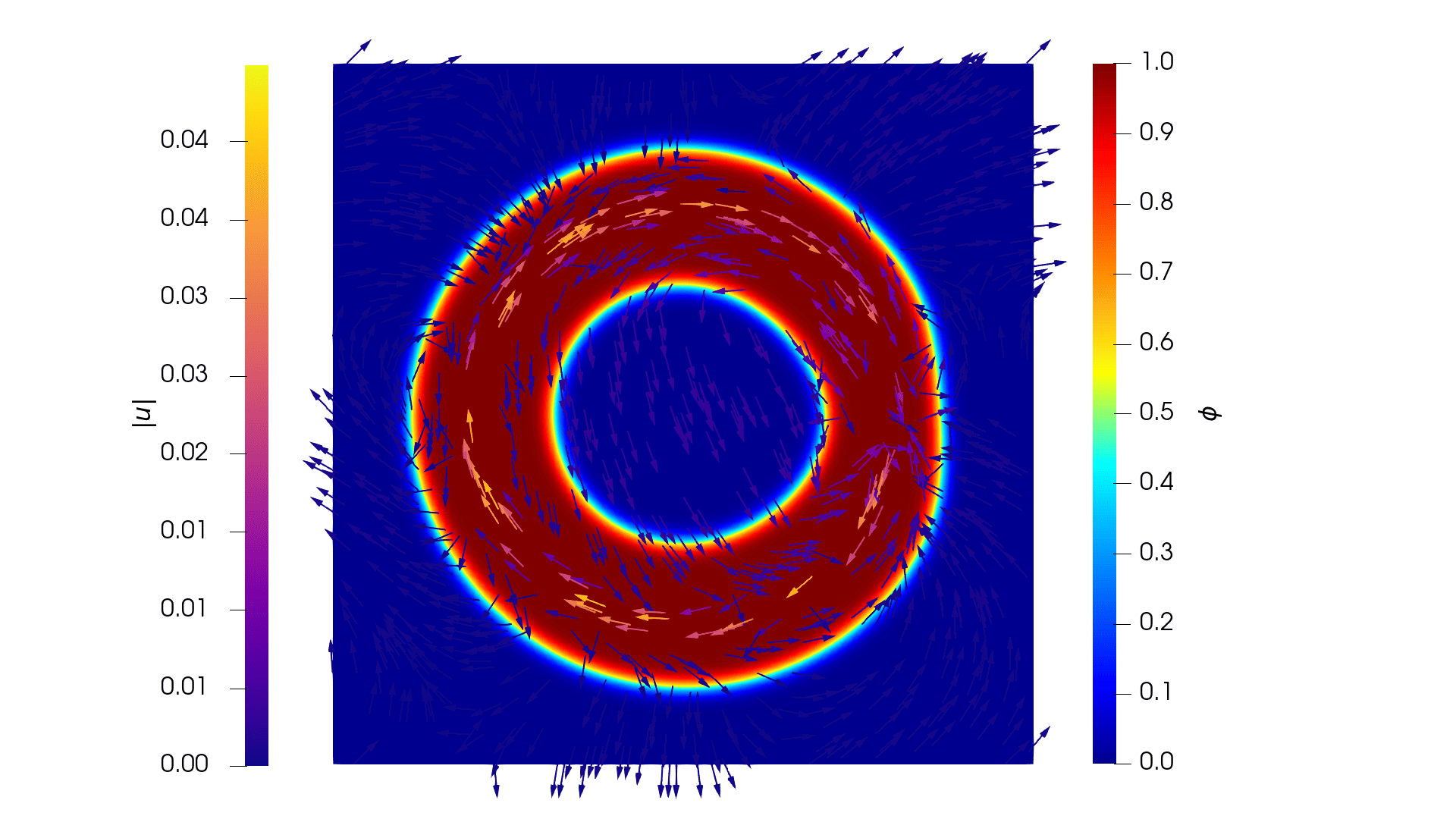} 
                &
                \includegraphics[trim={17cm 0cm 17cm 0cm},clip,scale=0.1]{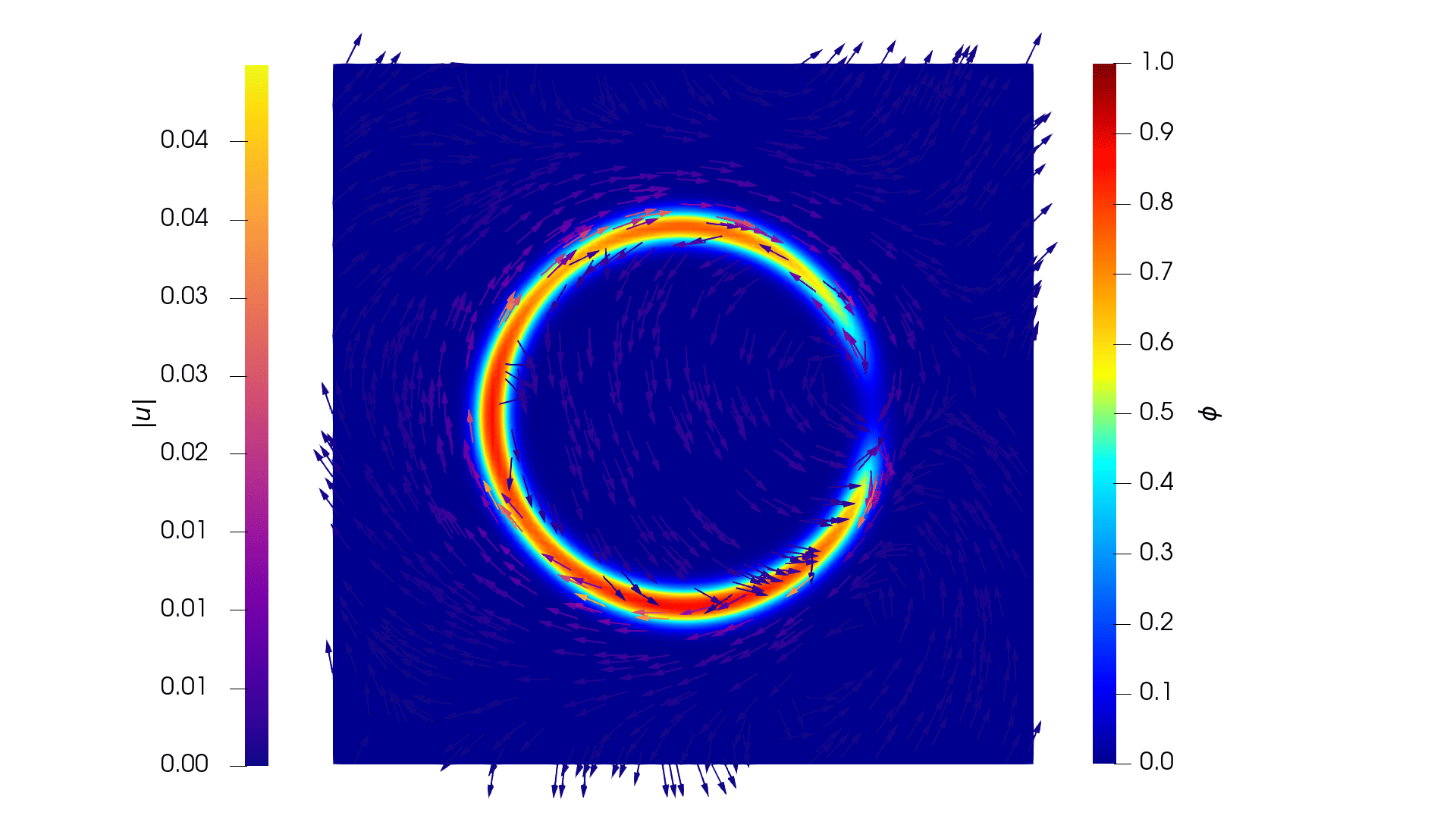}
                &
                \includegraphics[trim={17cm 0cm 17cm 0cm},clip,scale=0.1]{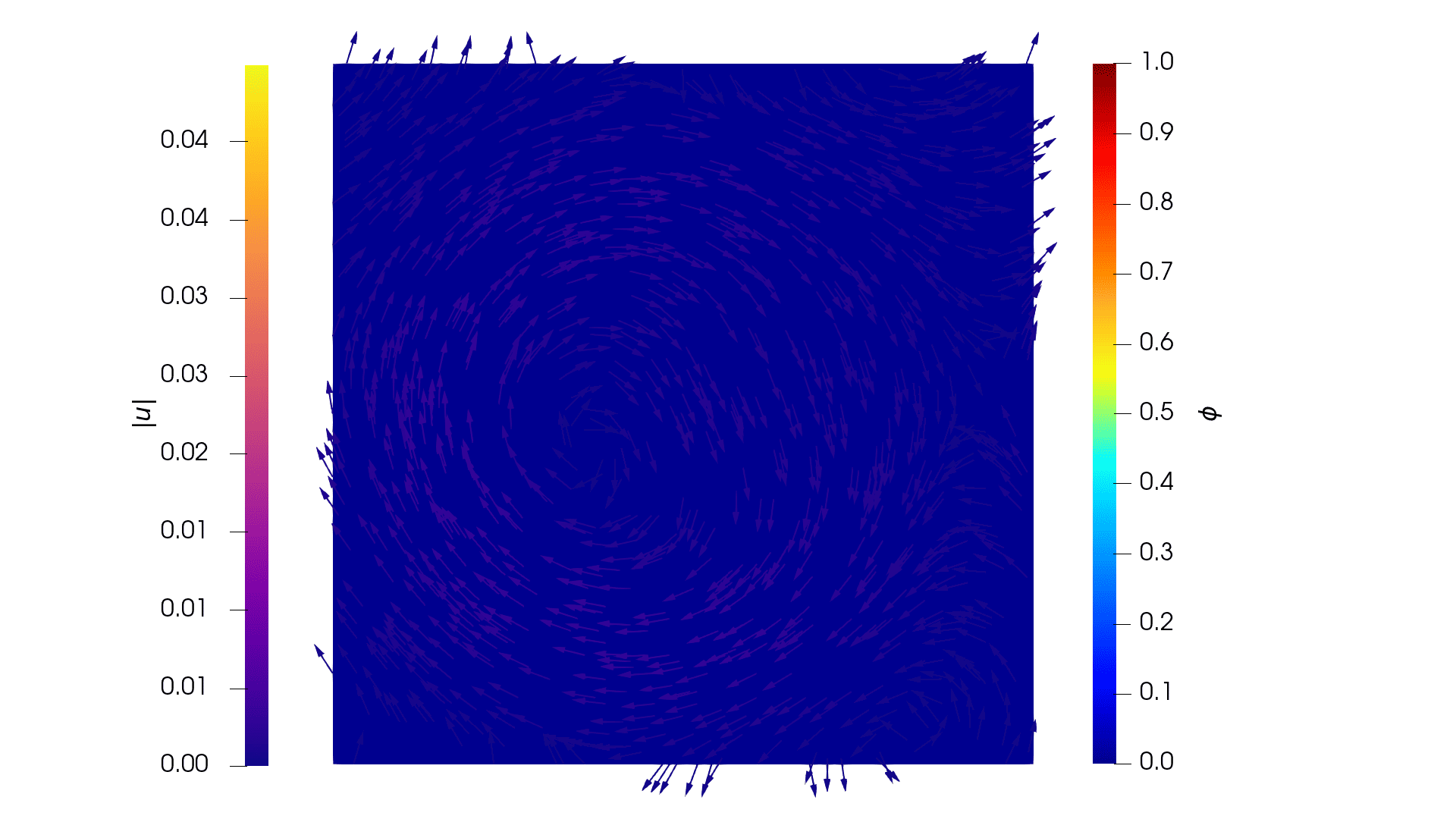}
                & 
                \\[-0.5em]
            \end{tabular}
            \caption{Snapshots of the phase variable $\phi_h$ together with the melt flow velocity $\u_h$ with the red circle outlining the heat source $Q_l$.}
            \label{fig:ex_laser_phi}
        \end{figure}
        \begin{figure}[htbp!]
            \centering
            \begin{tabular}{c@{}c@{}c@{}c@{}c@{}}
                $t=0.1$ & $t=0.5$ & $t=1$ & 
                \multirow{4}{*}{\includegraphics[trim={51cm 0cm 9cm 0cm},clip,scale=0.24]{Bilder/Laser/Theta_0.png}}
                \\[-0.5em]
                \includegraphics[trim={17cm 0cm 17cm 0cm},clip,scale=0.11]{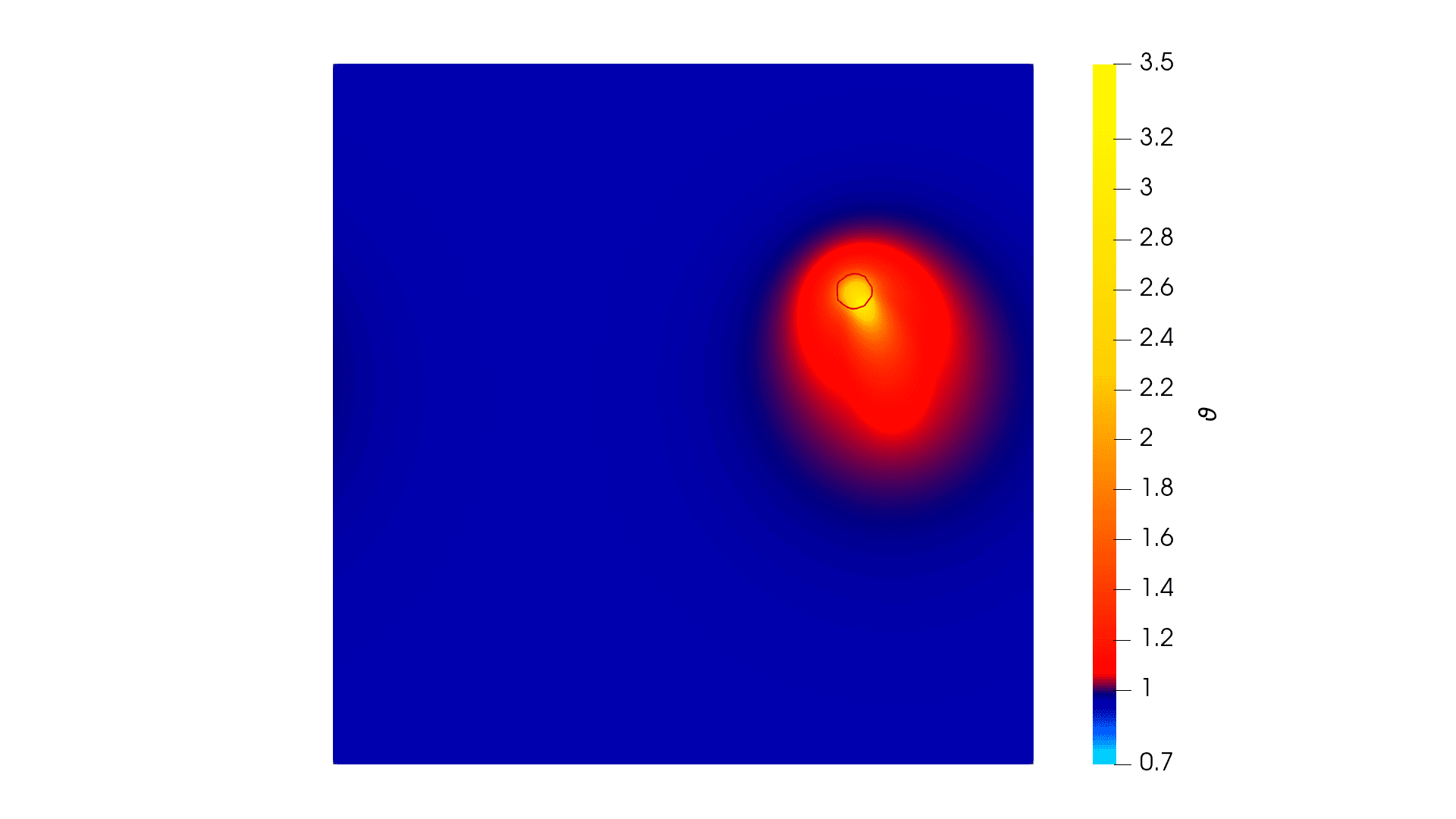} 
                &
                \includegraphics[trim={17cm 0cm 17cm 0cm},clip,scale=0.11]{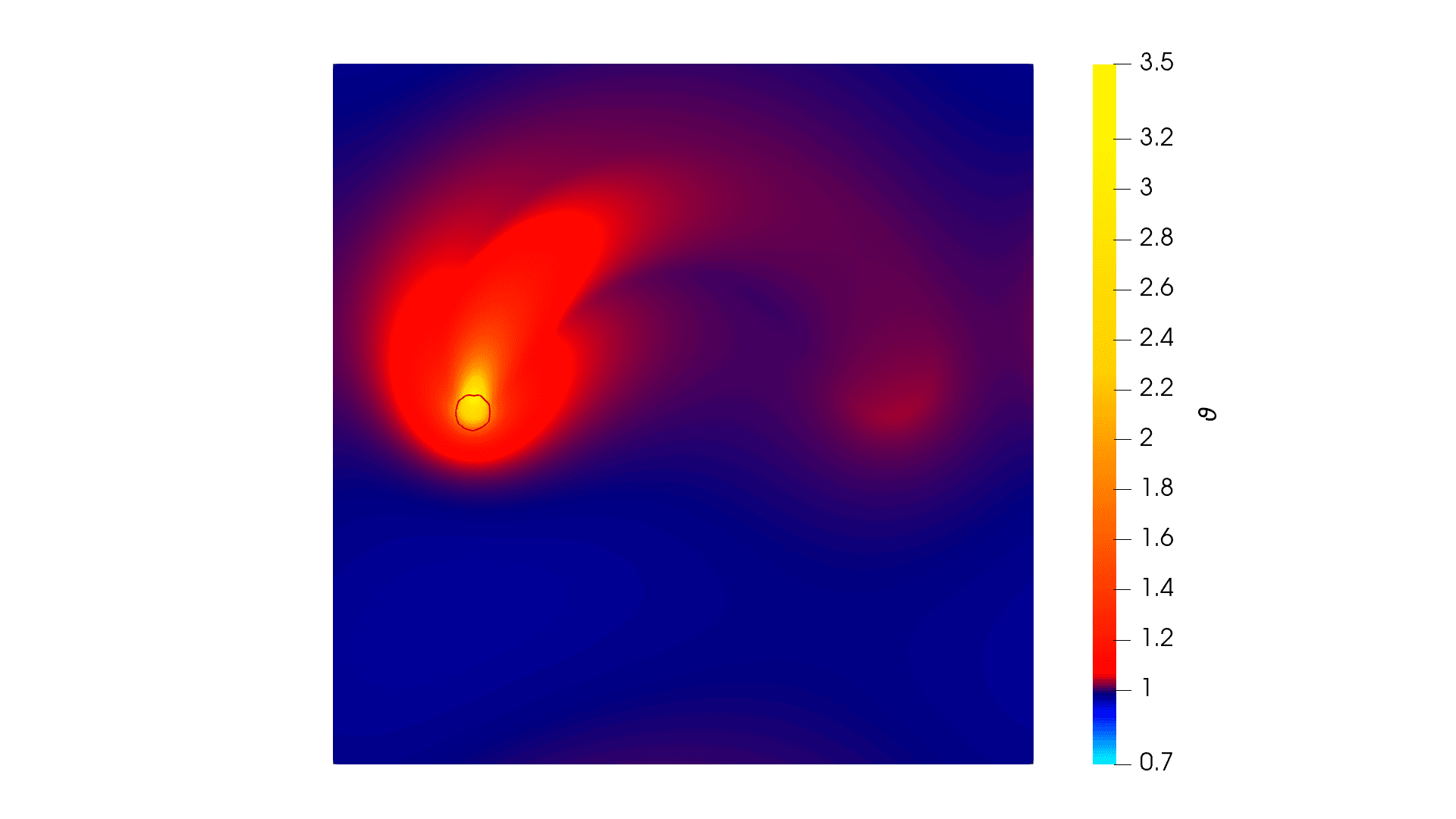}
                &
                \includegraphics[trim={17cm 0cm 17cm 0cm},clip,scale=0.11]{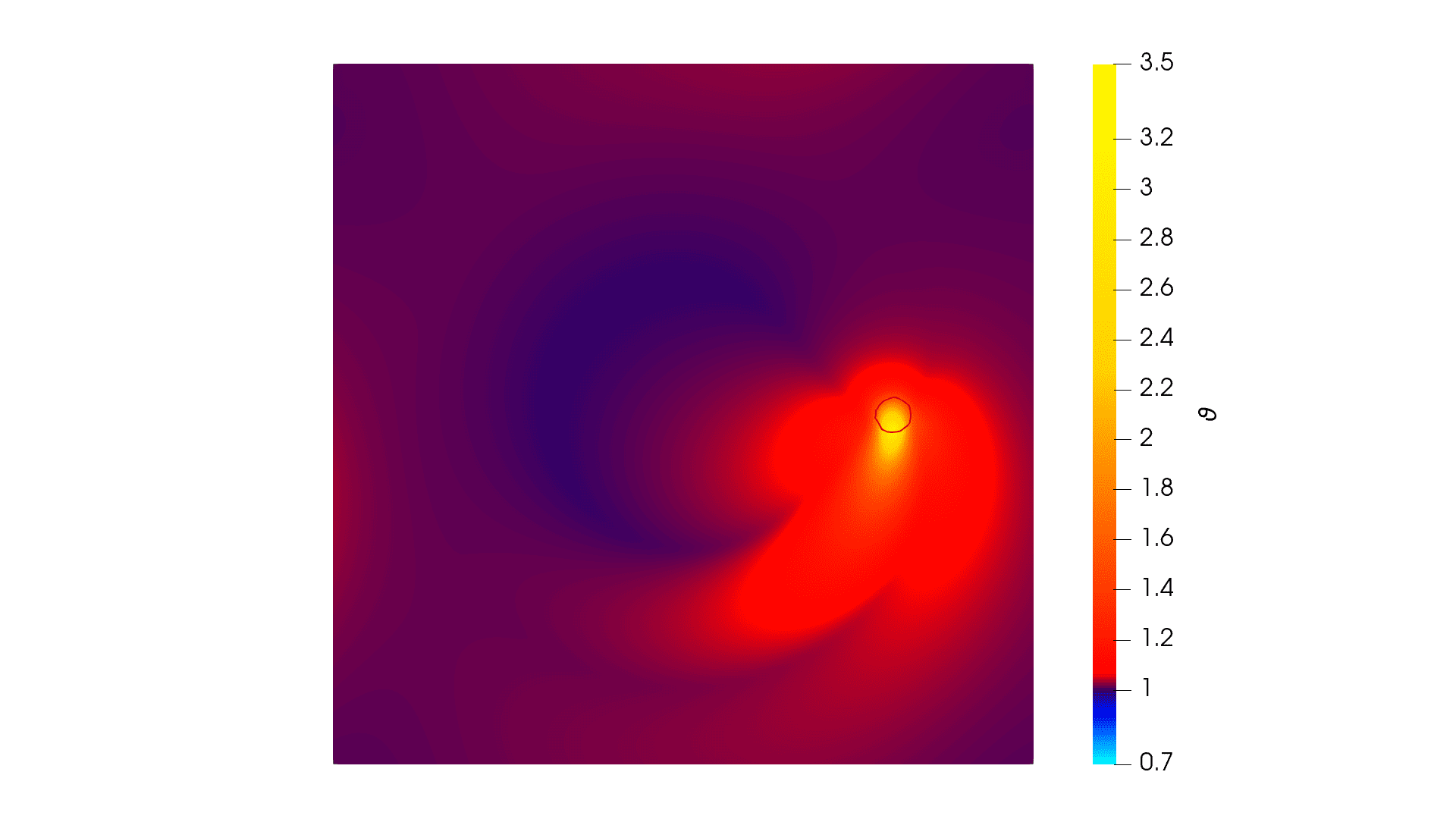}
                & 
                \\[-0.5em] $t=1.5$ & $t=2$ & $t=2.25$ &\\[-0.5em]
                \includegraphics[trim={17cm 0cm 17cm 0cm},clip,scale=0.11]{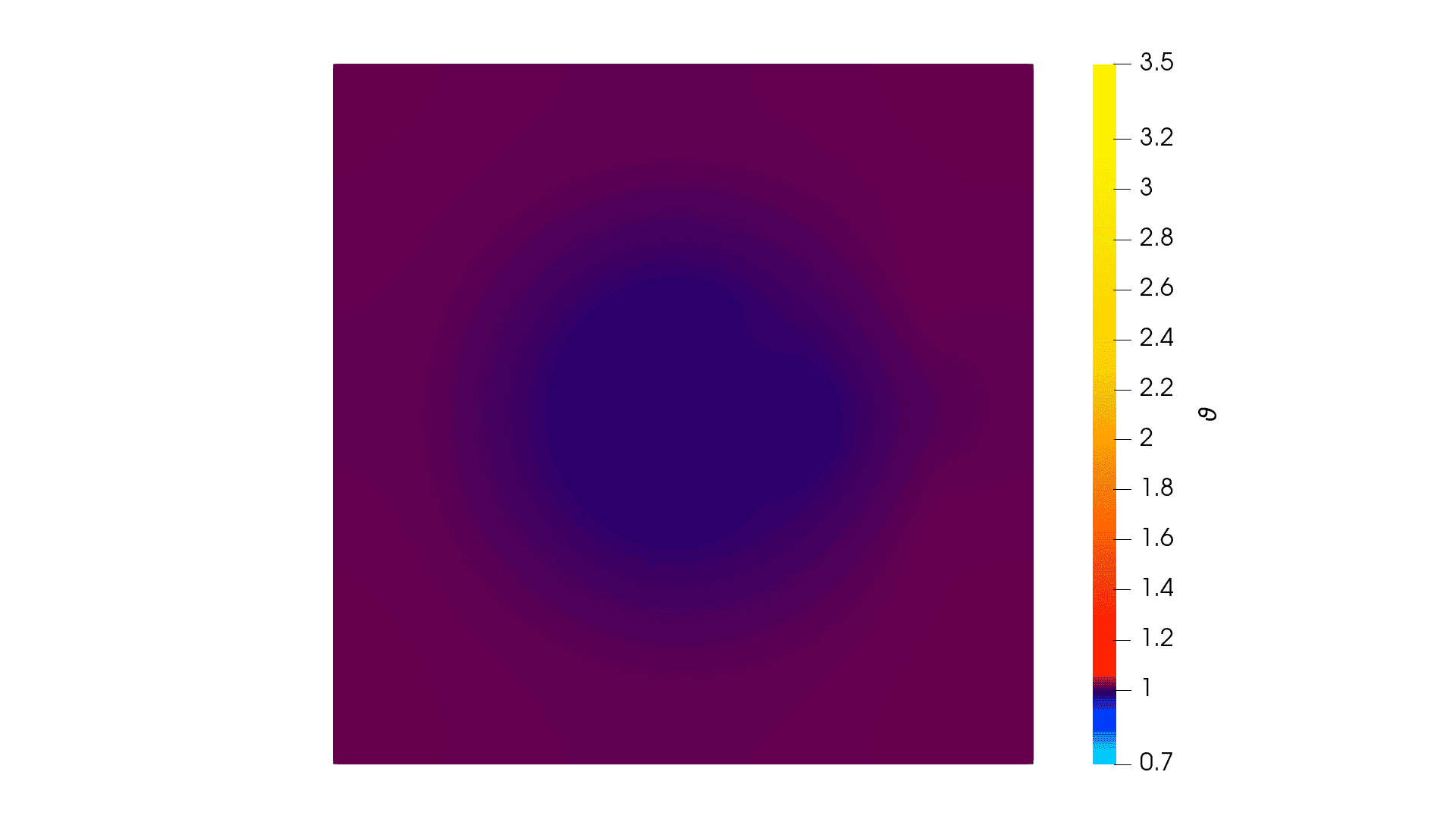} 
                &
                \includegraphics[trim={17cm 0cm 17cm 0cm},clip,scale=0.11]{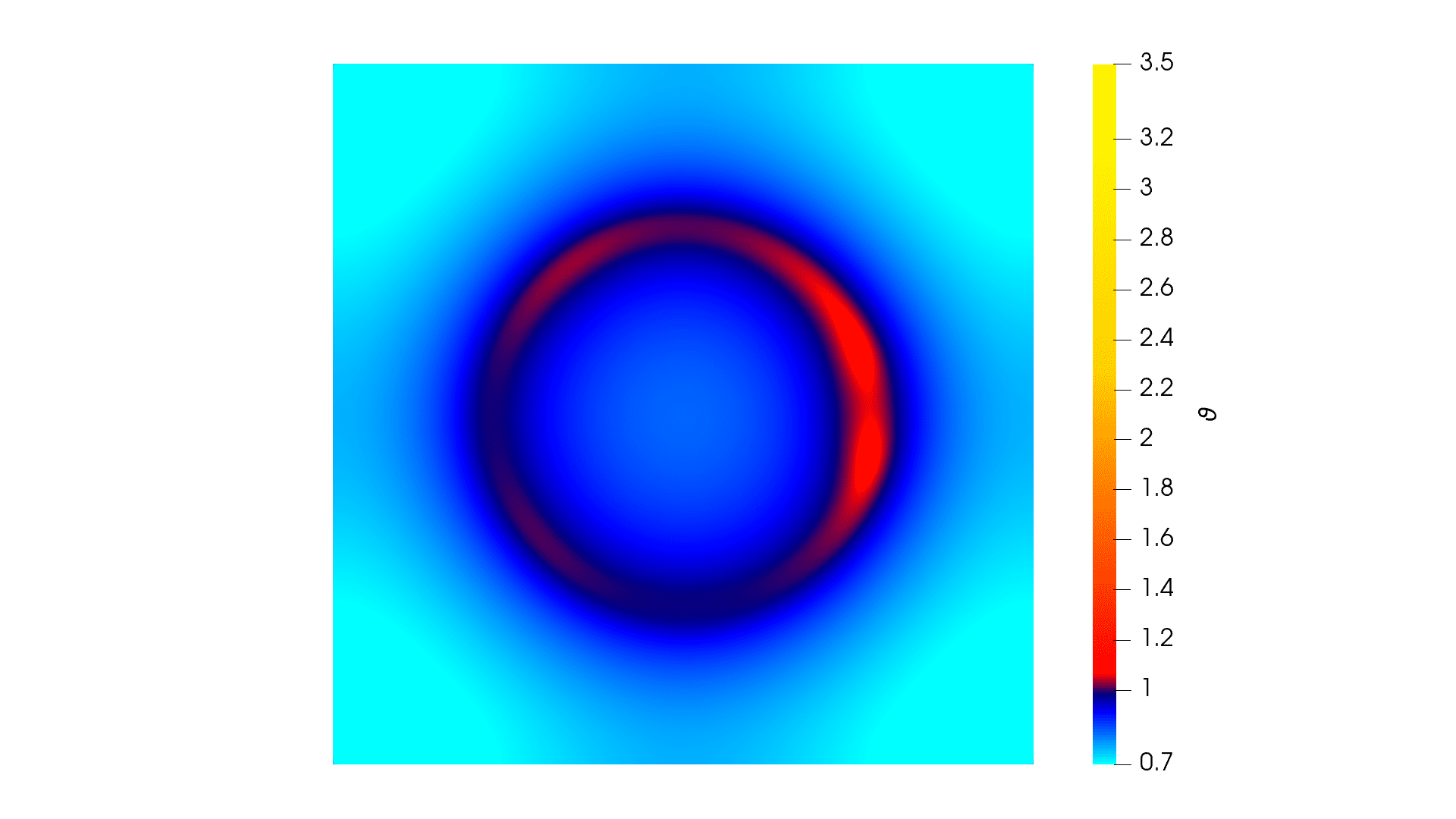}
                &
                \includegraphics[trim={17cm 0cm 17cm 0cm},clip,scale=0.11]{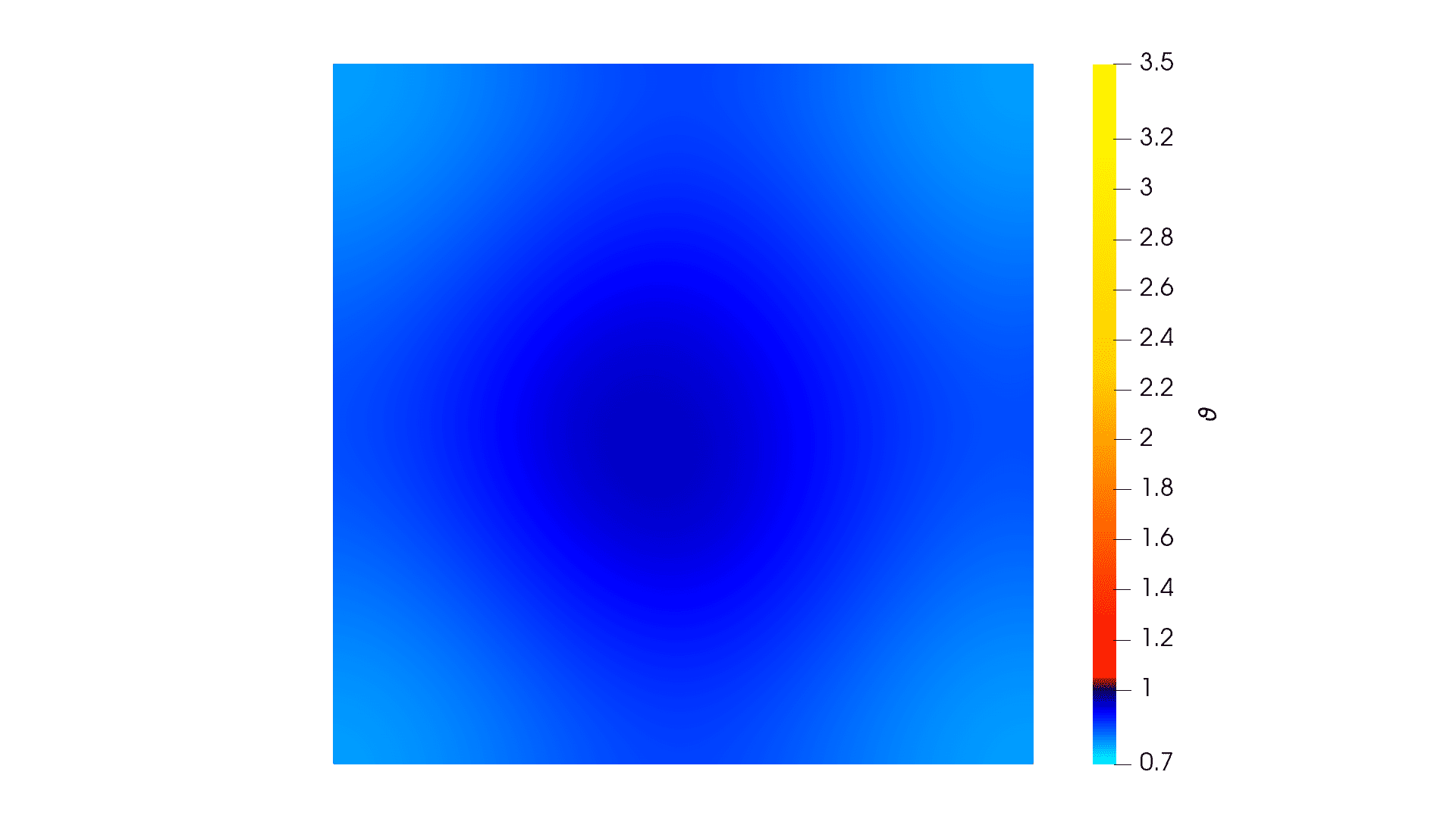}
                & 
                \\[-0.5em]
            \end{tabular}
            \caption{Snapshots of the temperature $\vtheta_h$ with the red circle outlining the heat source $Q_l$}
            \label{fig:ex_laser_theta}
        \end{figure}

        When we look at \cref{fig:ex_laser_struct}, showing the entropy $s_h$ and total energy $e_\mathrm{tot,h}$, because of the external heat source, we see a rise of energy at the beginning due to the laser constantly heating up the domain, followed by the delayed cooling between $t=1.5$ and $t=2$. Similar behavior can be seen for the entropy. If we subtract the respective source terms, as they appear in \cref{thm:discstruc}, as well as the initial values at $t=0$, we find that the entropy production is indeed positive as well as the negative dissipation in the energy, as shown in \cref{fig:ex_laser_adjstruct}.
        \begin{figure}[htbp!]
            \centering
            \hspace*{-2em}
            \begin{tabular}{c@{}c@{}}
                \includegraphics[width=0.49\linewidth]{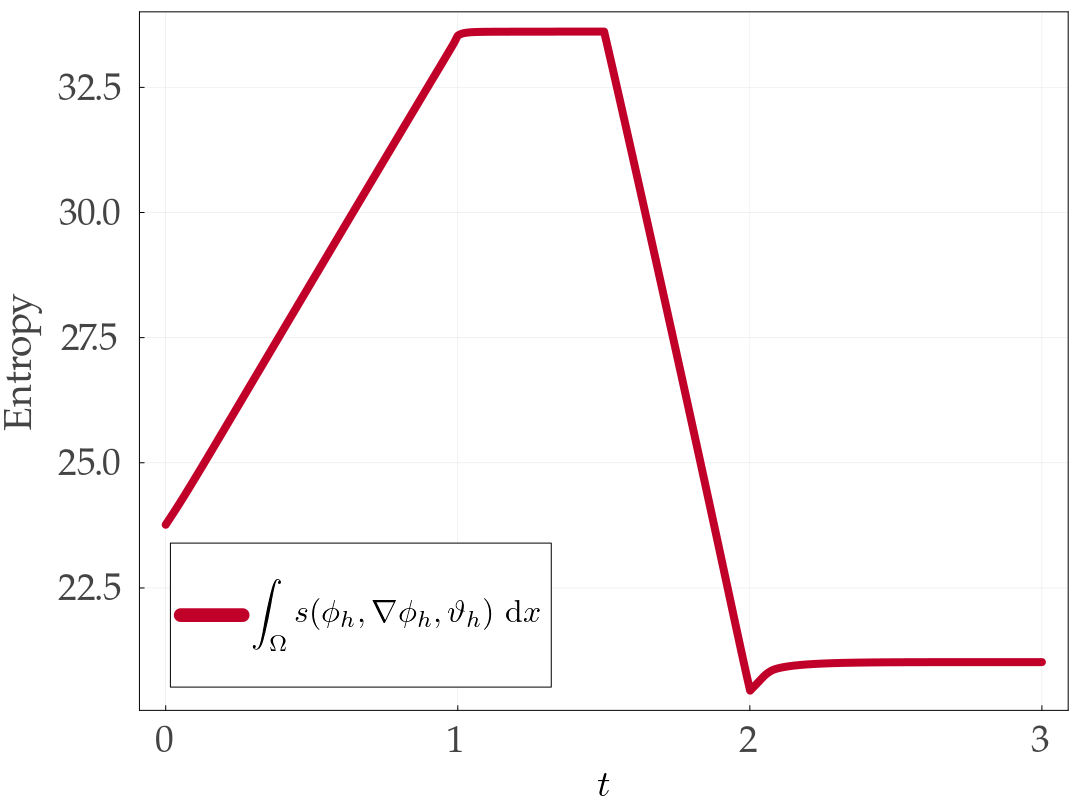}
                &
                \includegraphics[width=0.49\linewidth]{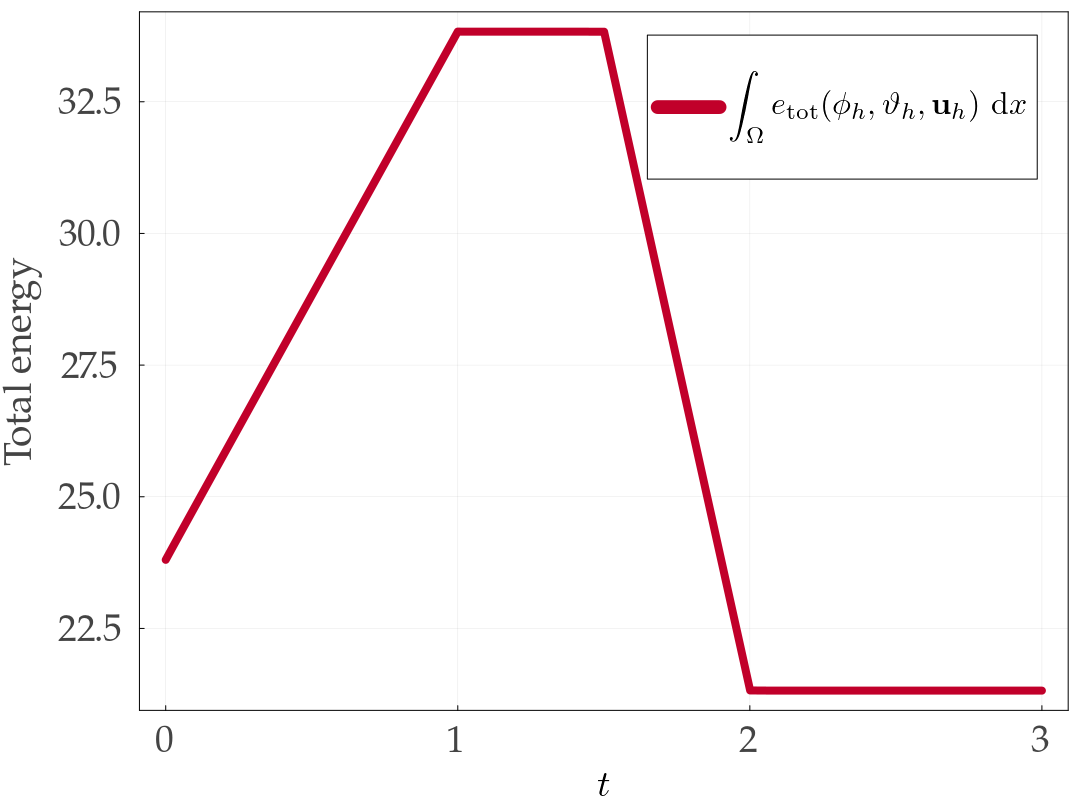}
            \end{tabular}
            \caption{Integrated entropy density $s_h$ (left) and total energy density $e_{\mathrm{tot},h}$ (right) over time.}
            \label{fig:ex_laser_struct}
        \end{figure}
        \begin{figure}[htbp!]
            \centering
            \begin{tabular}{c@{}c@{}}
                \includegraphics[width=0.49\linewidth]{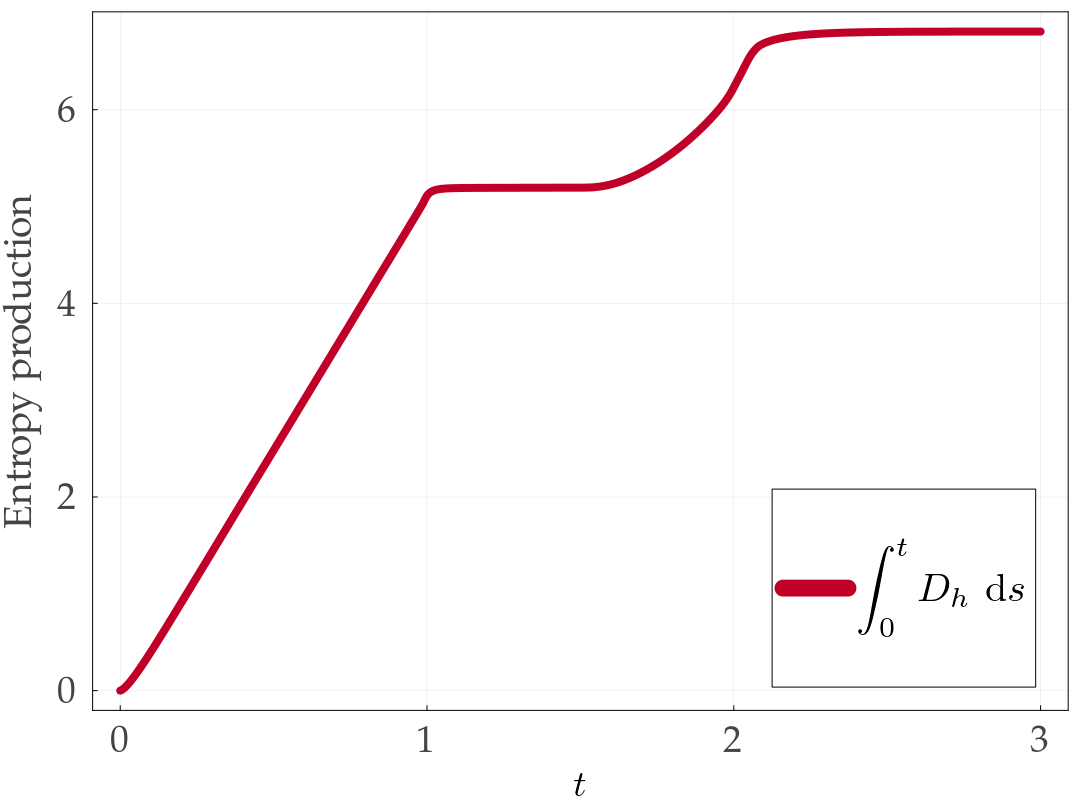}
                &
                \includegraphics[width=0.49\linewidth]{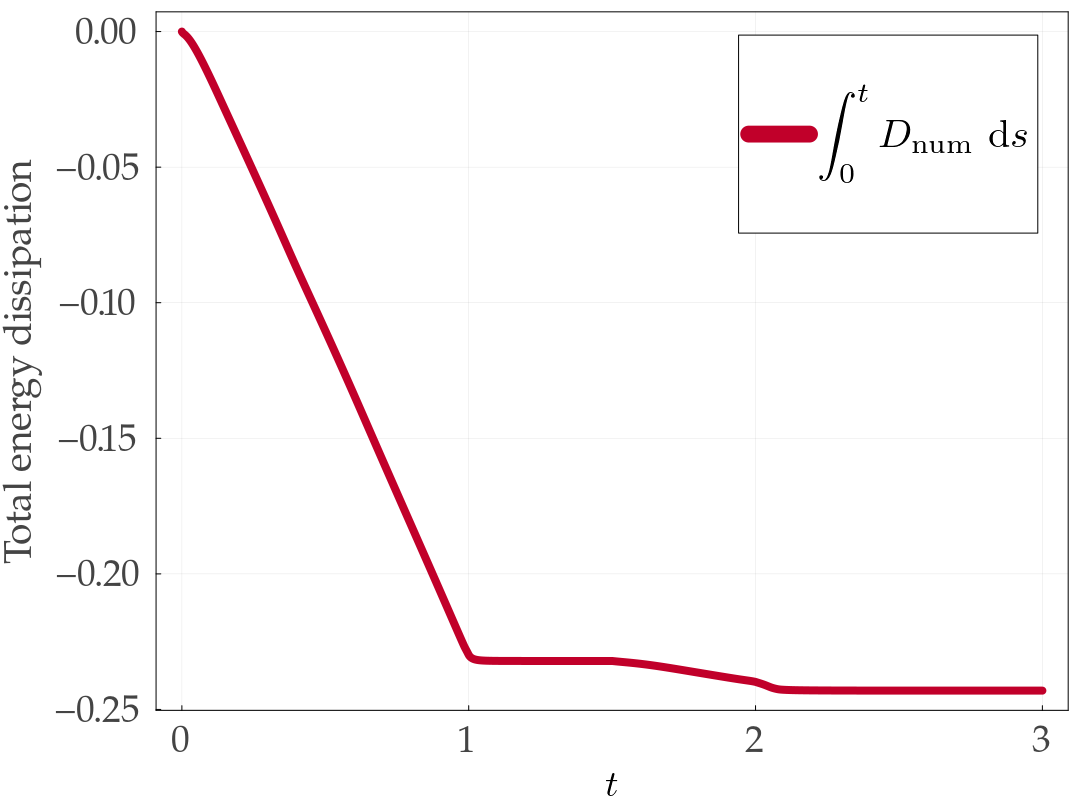}
            \end{tabular}
            \caption{Time integral of the entropy production $D_h$ (left) and numerical dissipation $D_\mathrm{num}$ (right).}
            \label{fig:ex_laser_adjstruct}
        \end{figure}
    \subsection{Dendritic growth}
        The last example utilizes an anisotropic gradient contribution $G$ of the form
        \begin{equation*}
            G(\nabla\phi)=\gamma_0^2\left(1 + \delta\dfrac{\phi_x^4 - 6\phi_x^2\phi_y^2+\phi_y^4}{(\phi_x^2+\phi_y^2+10^{-10})^2}\right)^2\snorm{\nabla\phi}^2,
        \end{equation*}
        simulating crystalline interface behavior by preferring the interface to orient in certain directions with an anisotropic strength of $\delta=0.9$. To show this behavior we consider initial values for $\phi_h,\vtheta_h$ and $\u_h$ as
        \begin{align*}
            \phi_0(x,y)&:= 0.5 + 0.5\tanh\left(\frac{(x-5)^2+(y-5)^2 - r^2}{0.008}\right),\\
            \vtheta_0(x,y)&:=\vtheta_m-\phi_0(\vtheta_m-\vtheta_b),\\
            \u_0(x,y)&:=(0,0)^\top,
        \end{align*}
        on the domain $\Omega=[0,10]^2$ which, as seen in \cref{fig:ex_dendritic_init}, represents a solid grain at melting temperature floating in an undercooled liquid, forcing the grain to grow. 
        \begin{figure}[htbp!]
            \centering
            \includegraphics[trim={16cm 0cm 9cm 0cm},clip,scale=0.14]{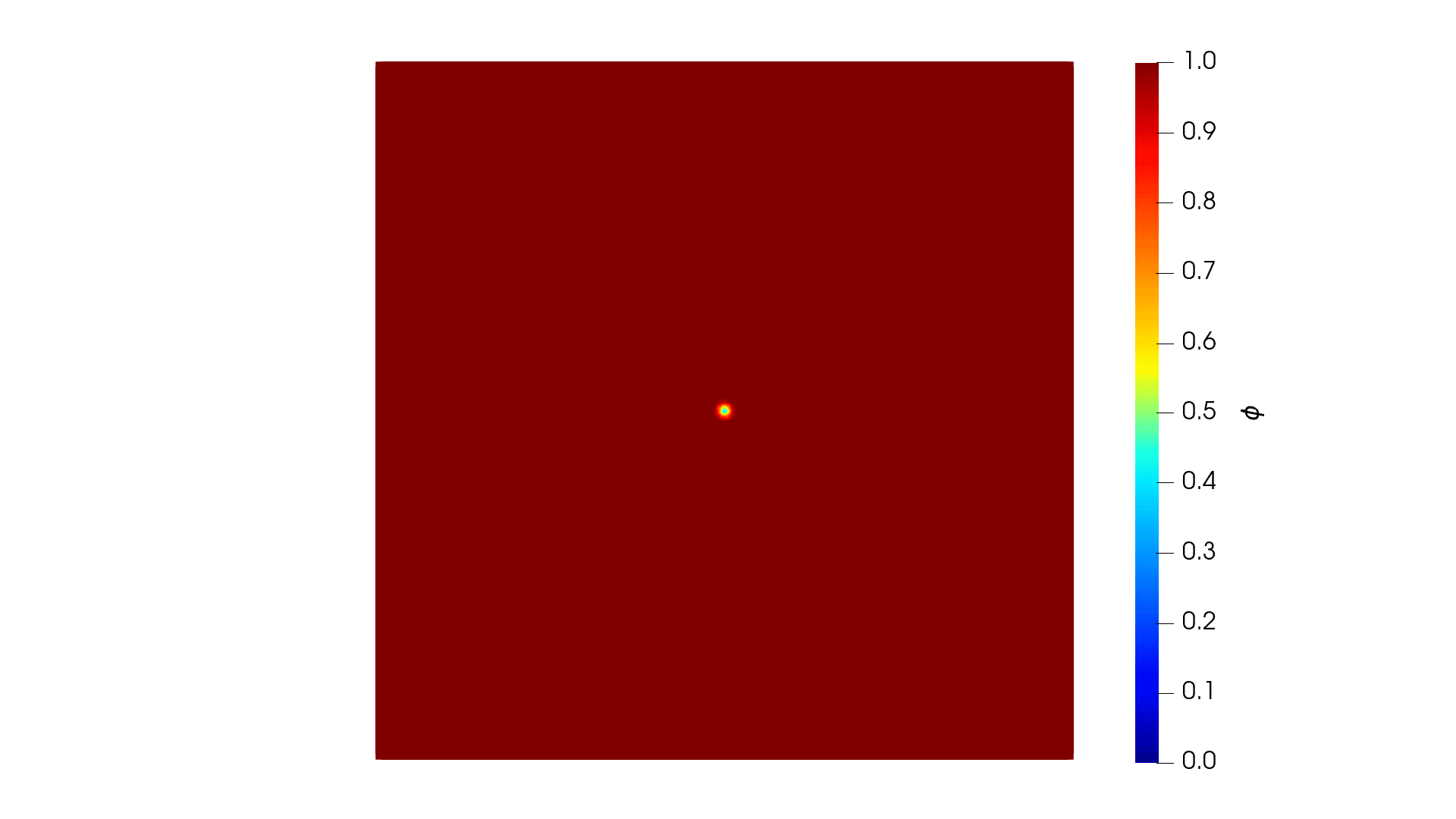}
            \includegraphics[trim={16cm 0cm 9cm 0cm},clip,scale=0.14]{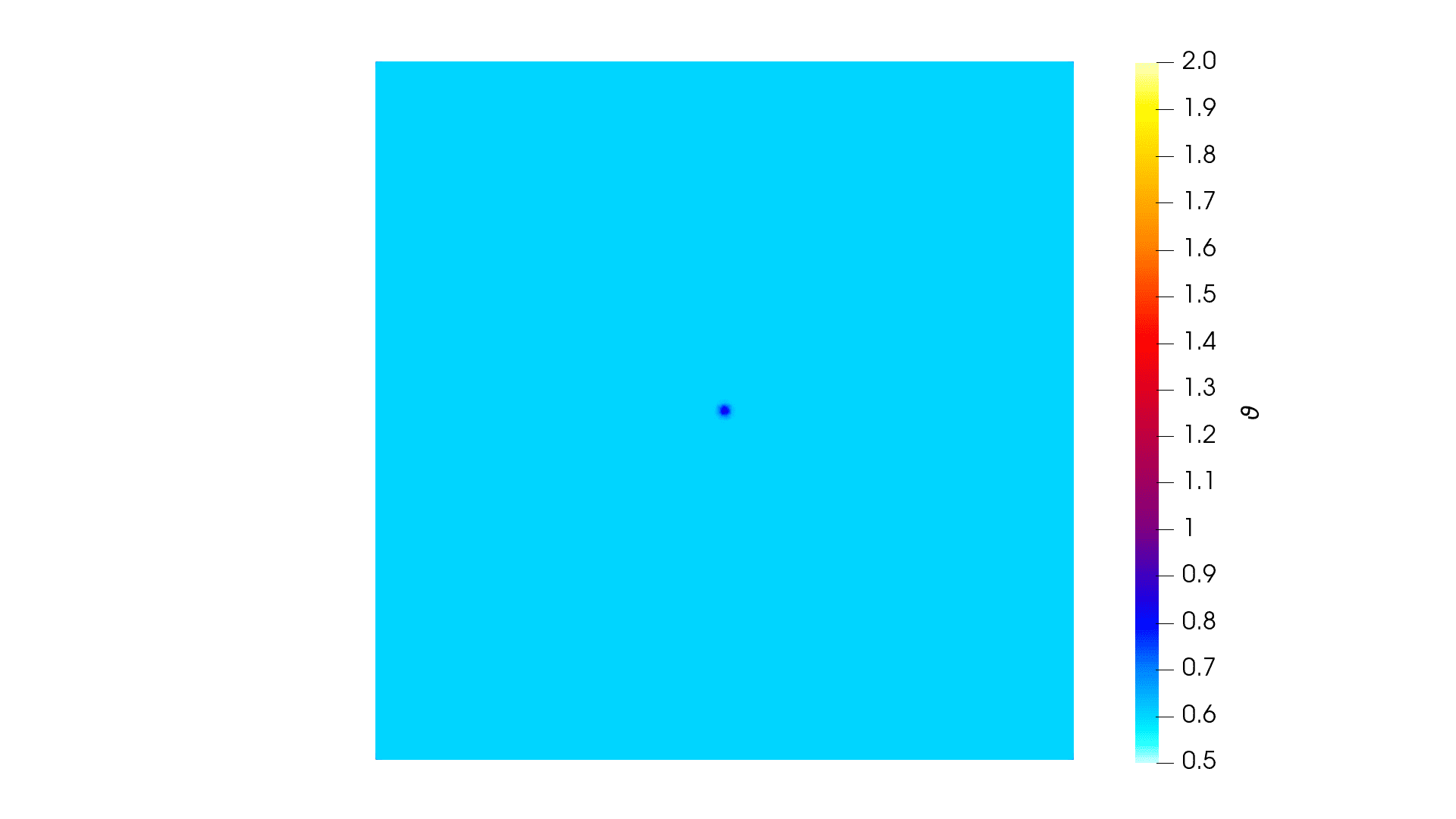} 
            \caption{Initial condition for second example of the phase variable $\phi$ (left) and the temperature $\vtheta$ (right).}
            \label{fig:ex_dendritic_init}
        \end{figure}
        
        Since we do not want the domain to heat up, slowing down the crystal growth, we use \cref{bc:thermal} to keep the boundary at a fixed temperature $\vtheta_b$. For the simulation parameters we choose the following constants:\\
        \begin{minipage}[t]{0.32\textwidth}
            \vspace{0.1cm}
            \begin{itemize}
                \item $\M=100$
                \item $\K=200$
                \item $\tau=0.00025$
                \item $\b=\mathbf{0}$
                \item $Q=0$
            \end{itemize}
            \vspace{0.2cm}
        \end{minipage}
        \begin{minipage}[t]{0.32\textwidth}
            \vspace{0.1cm}
            \begin{itemize}
                \item $\gamma_0=0.05$
                \item $\eta_s=1$
                \item $\eta_l=0.001$
                \item $\vtheta_m=1$
                \item $\vtheta_b=0.6$
            \end{itemize}
            \vspace{0.2cm}
        \end{minipage}
        \begin{minipage}[t]{0.32\textwidth}
            \vspace{0.1cm}
            \begin{itemize}
                \item $\mathcal{L}=15$
                \item $H_\mathrm{pt}=1$
                \item $H_\mathrm{cf}=0.1$
                \item $C_\mathrm{vsh}=1$
                \item $r=0.05$
            \end{itemize}
            \vspace{0.2cm}
        \end{minipage}
        Since the anisotropic gradient contribution, the grain prefers to grow along the horizontal and vertical axis as seen in \cref{fig:ex_dendritic_phitheta}, which also causes the crystal to bulge outwards on these branches near the center, eventually forming new subbranches. Meanwhile the temperature in the crystal rises slightly due to the release of latent heat by the phase change from liquid to solid.\\
        \begin{figure}[htbp!]
            \centering
            \hspace*{-1em}
            \begin{tabular}{c@{}c@{}c@{}c@{}c@{}c@{}}
                & $t=0$ & $t=0.1$ & $t=0.25$ & $t=0.35$ & 
                \\[-0.5em]
                \includegraphics[trim={8cm 0cm 52.2cm 0cm},clip,scale=0.09]{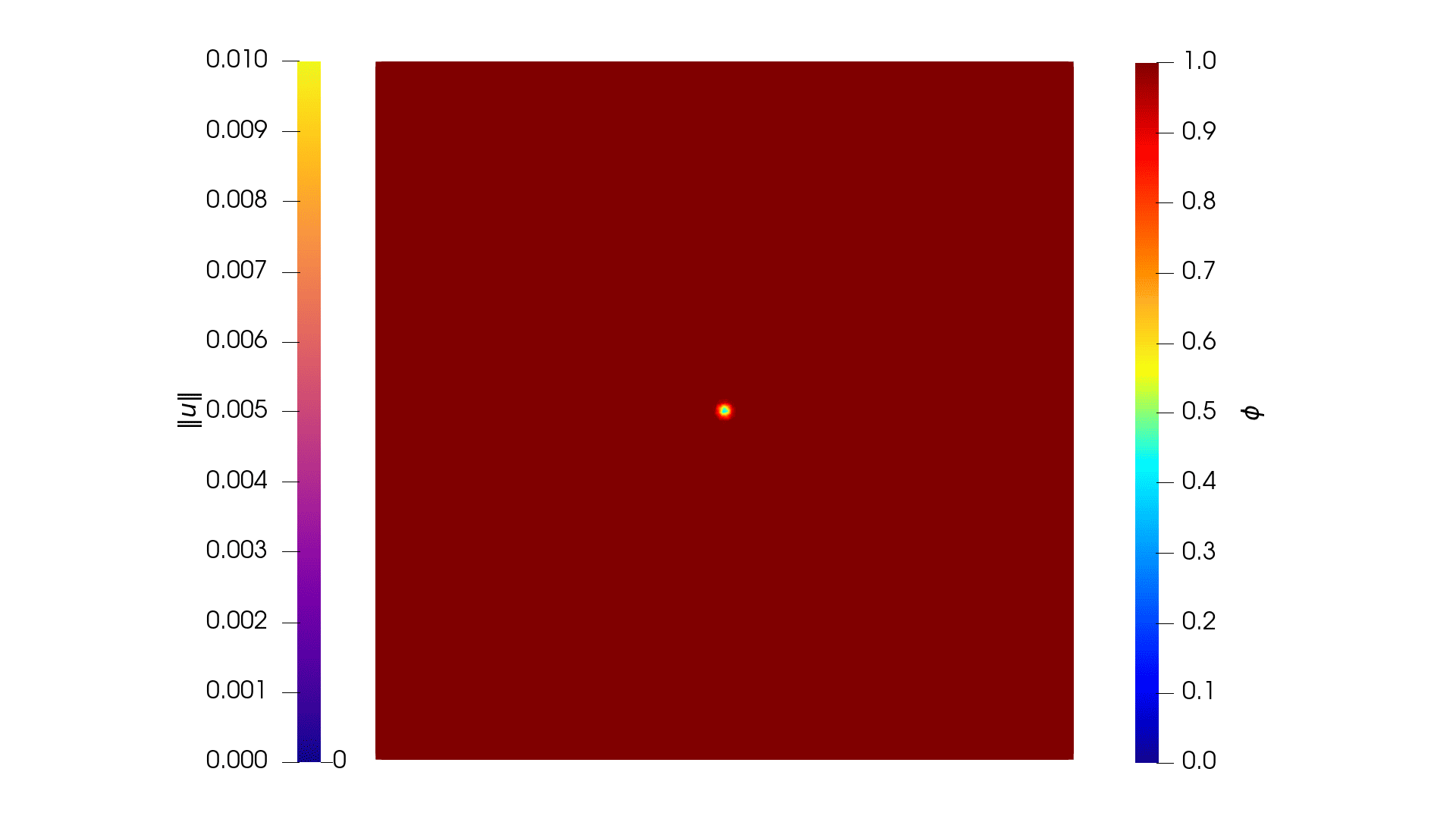}
                &
                \includegraphics[trim={17cm 0cm 17cm 0cm},clip,scale=0.09]{Bilder/Dendritic/Phi_U_0.png} 
                &
                \includegraphics[trim={17cm 0cm 17cm 0cm},clip,scale=0.09]{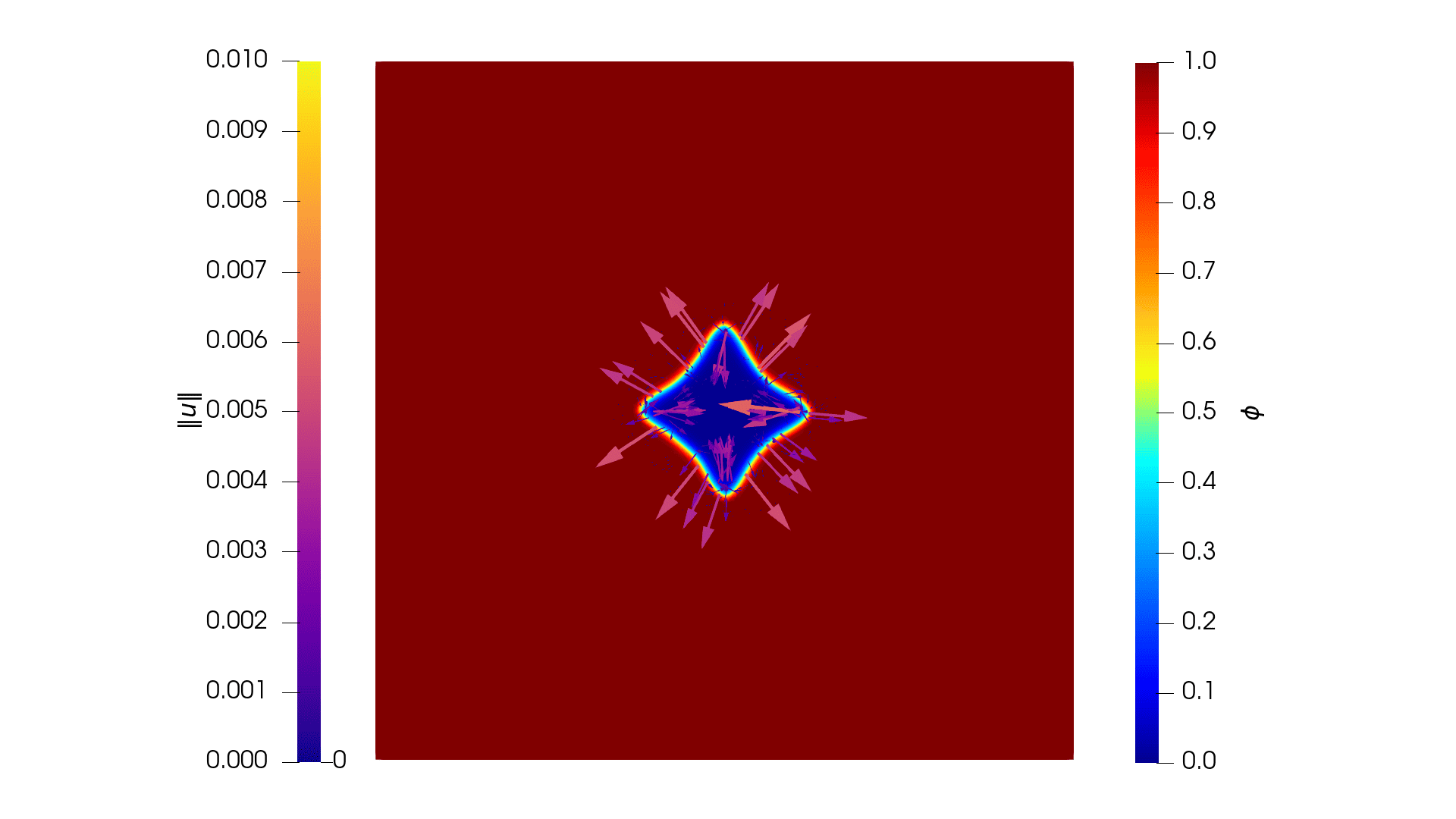} 
                &
                \includegraphics[trim={17cm 0cm 17cm 0cm},clip,scale=0.09]{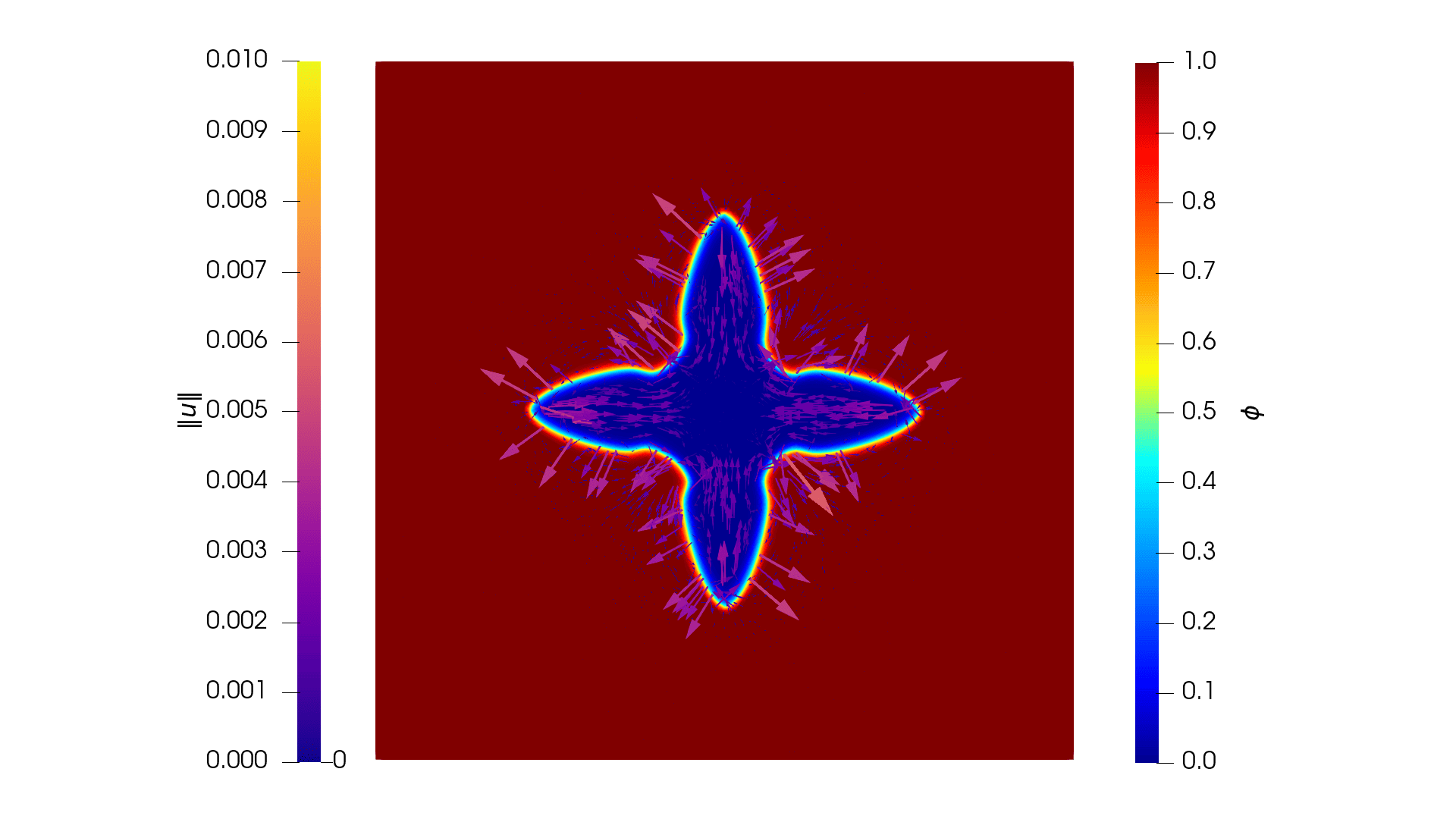}
                &
                \includegraphics[trim={17cm 0cm 17cm 0cm},clip,scale=0.09]{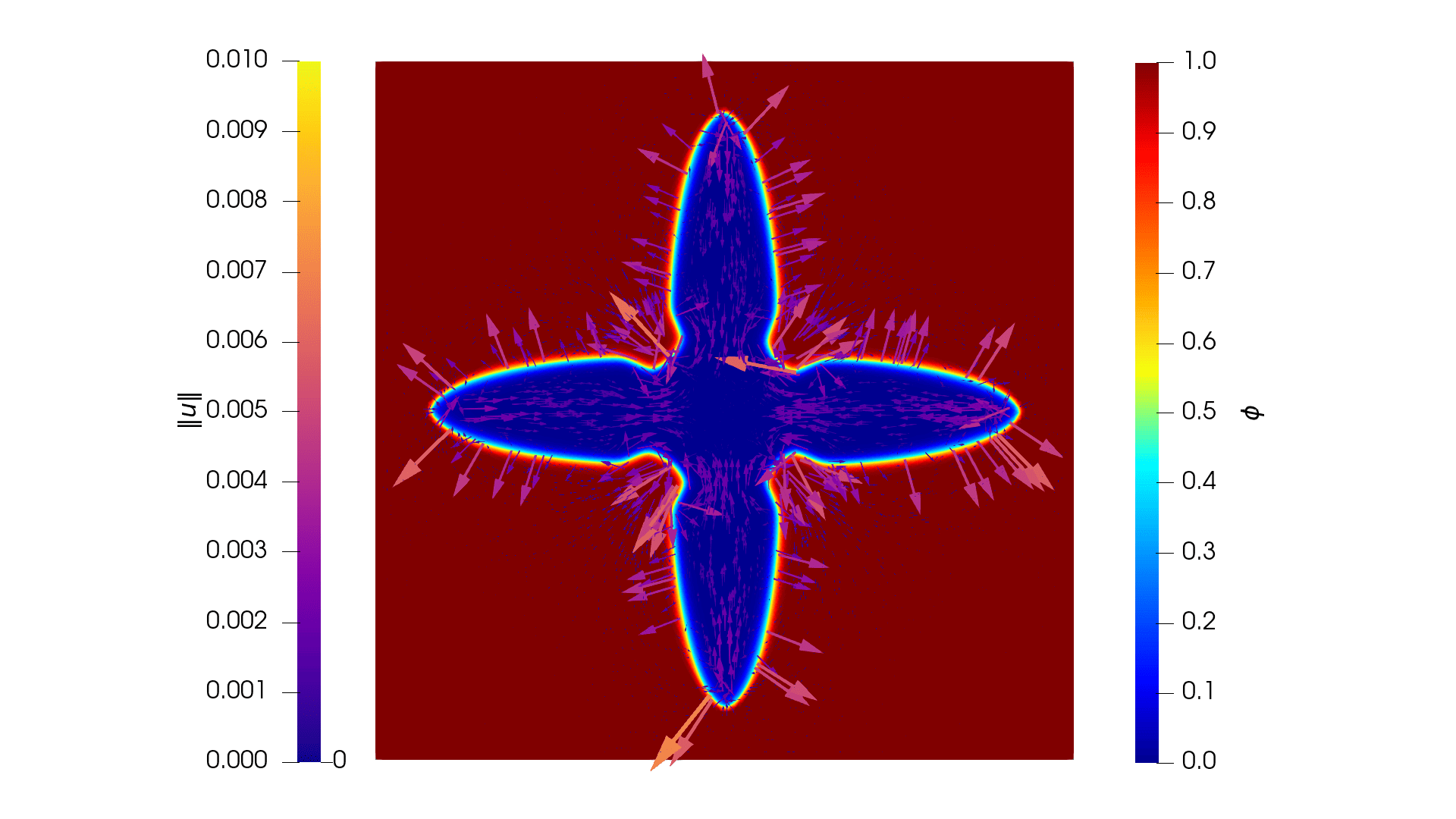}
                & 
                \includegraphics[trim={53cm 0cm 9cm 0cm},clip,scale=0.09]{Bilder/Dendritic/Phi_U_0.png}\\[-1em]
                \includegraphics[trim={9cm 0cm 52.5cm 0cm},clip,scale=0.09]{Bilder/Dendritic/Theta_0.png}
                &
                \includegraphics[trim={17cm 0cm 17cm 0cm},clip,scale=0.09]{Bilder/Dendritic/Theta_0.png} 
                &
                \includegraphics[trim={17cm 0cm 17cm 0cm},clip,scale=0.09]{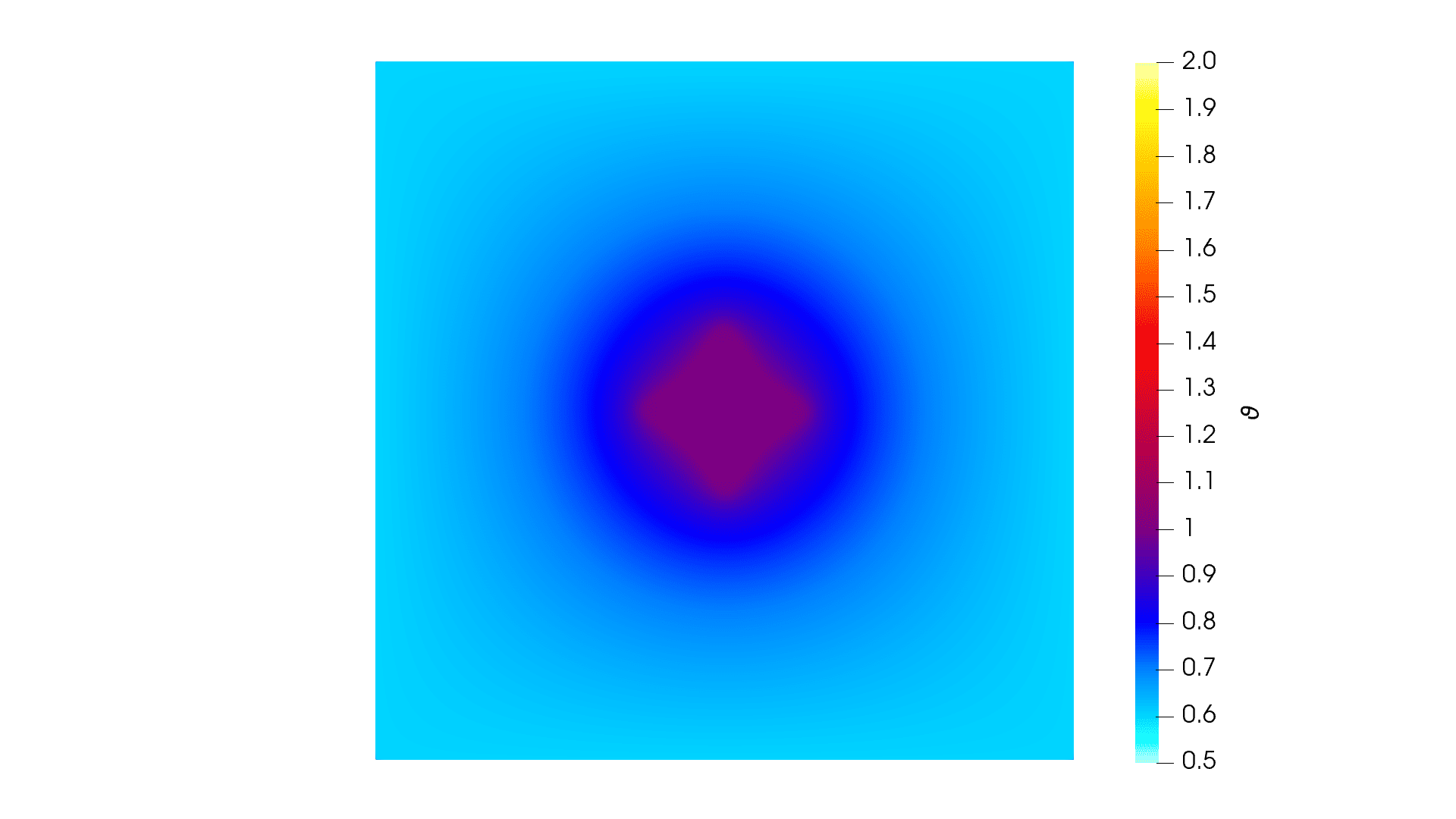} 
                &
                \includegraphics[trim={17cm 0cm 17cm 0cm},clip,scale=0.09]{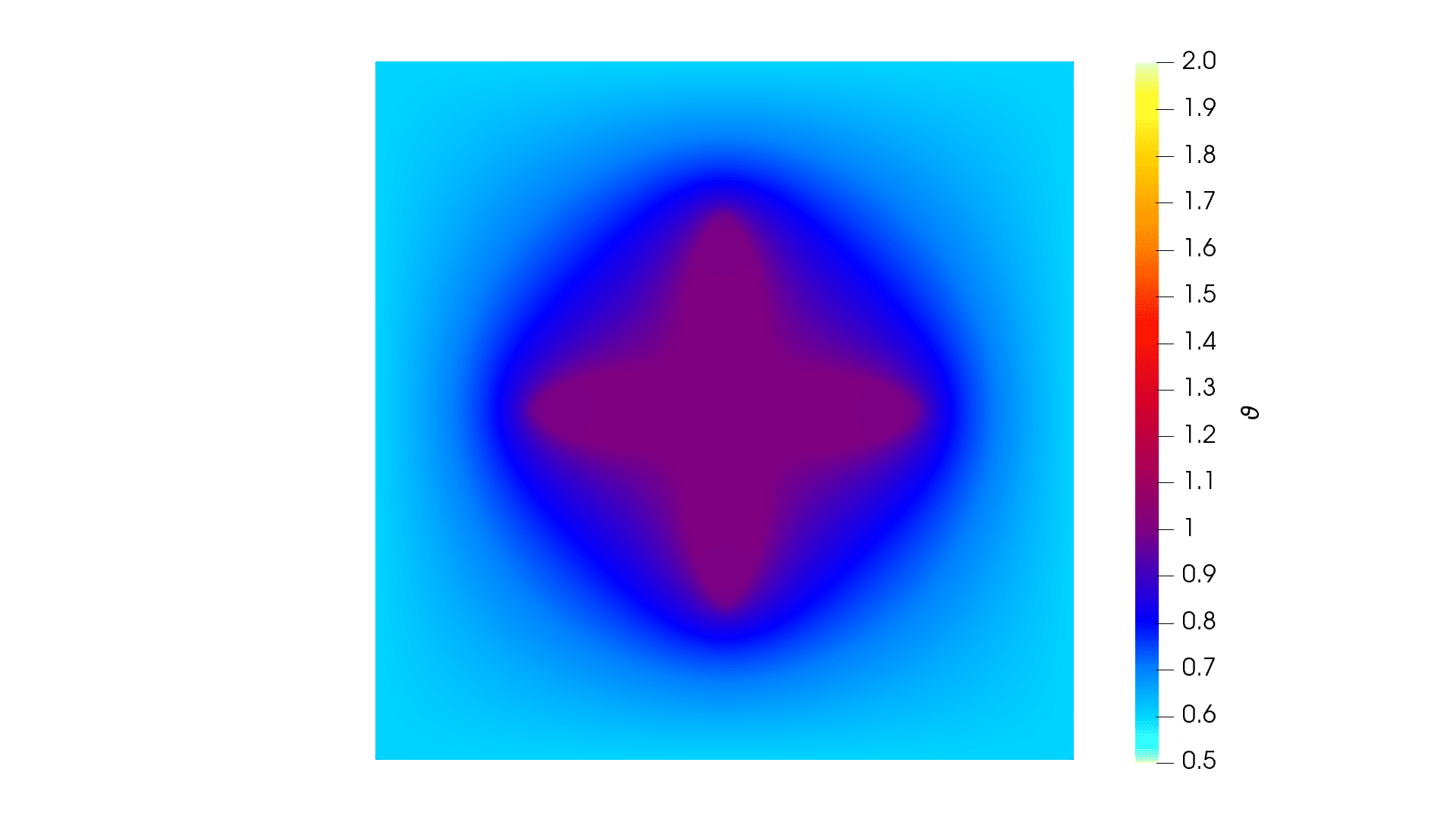}
                &
                \includegraphics[trim={17cm 0cm 17cm 0cm},clip,scale=0.09]{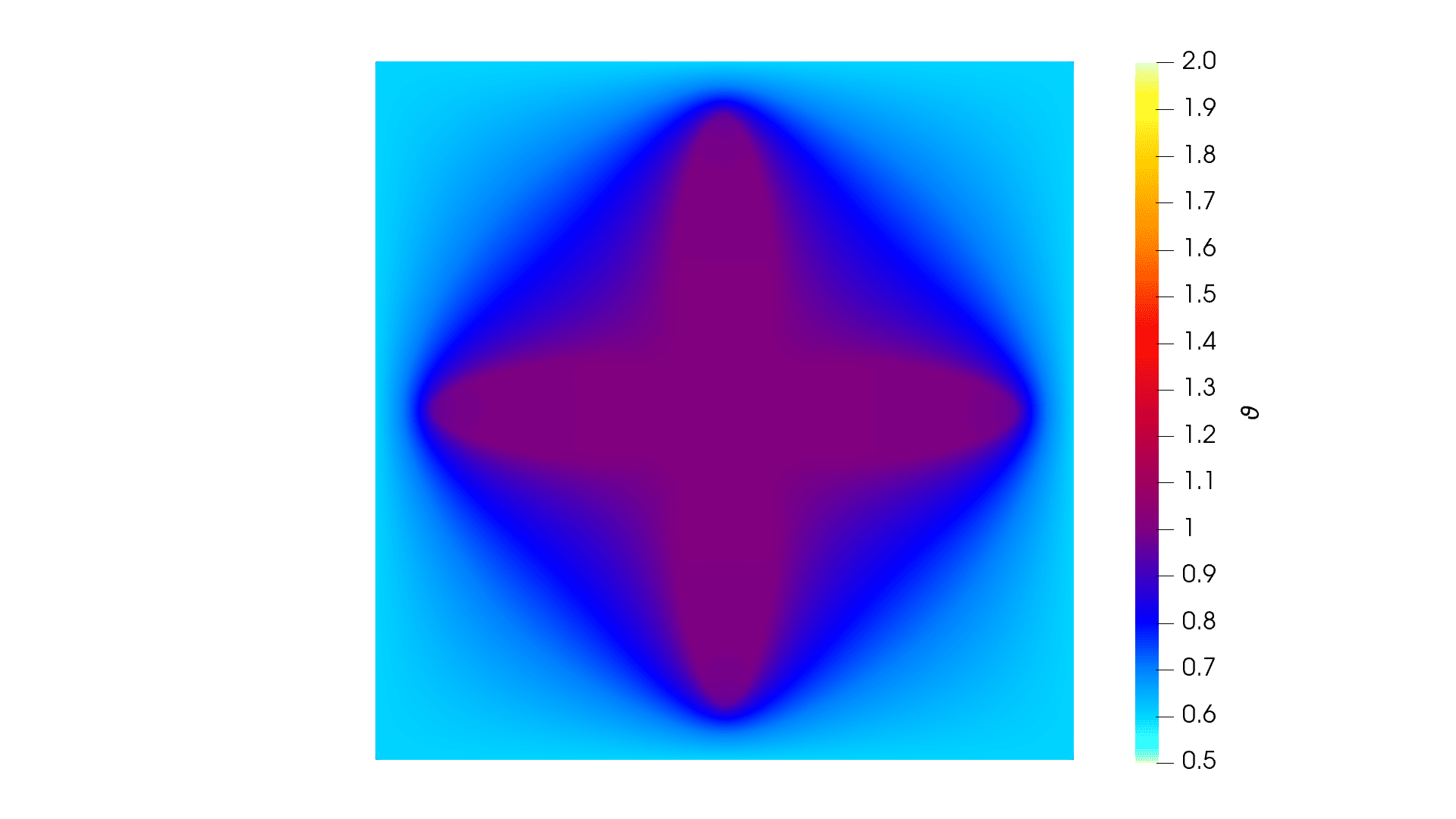}
                & 
                \includegraphics[trim={52.5cm 0cm 9cm 0cm},clip,scale=0.09]{Bilder/Dendritic/Theta_0.png}
                \\[-0.5em]
                \\
            \end{tabular}
            \caption{Snapshots of $\phi_h$ and $\u_h$ (top) and $\vtheta_h$ (bottom).}
            \label{fig:ex_dendritic_phitheta}
        \end{figure}
        
        Since we considered \cref{bc:thermal} we did not expect to see entropy production nor energy dissipation, which is confirmed by \cref{fig:ex_dendritic_struct}, instead, if we look at the exergy, one can see that \cref{thm:discstruc} holds indeed, showing a strictly negative exergy propagation.\\
        \begin{figure}[htbp!]
            \centering
            \hspace*{-2em}
            \includegraphics[width=0.6\linewidth]{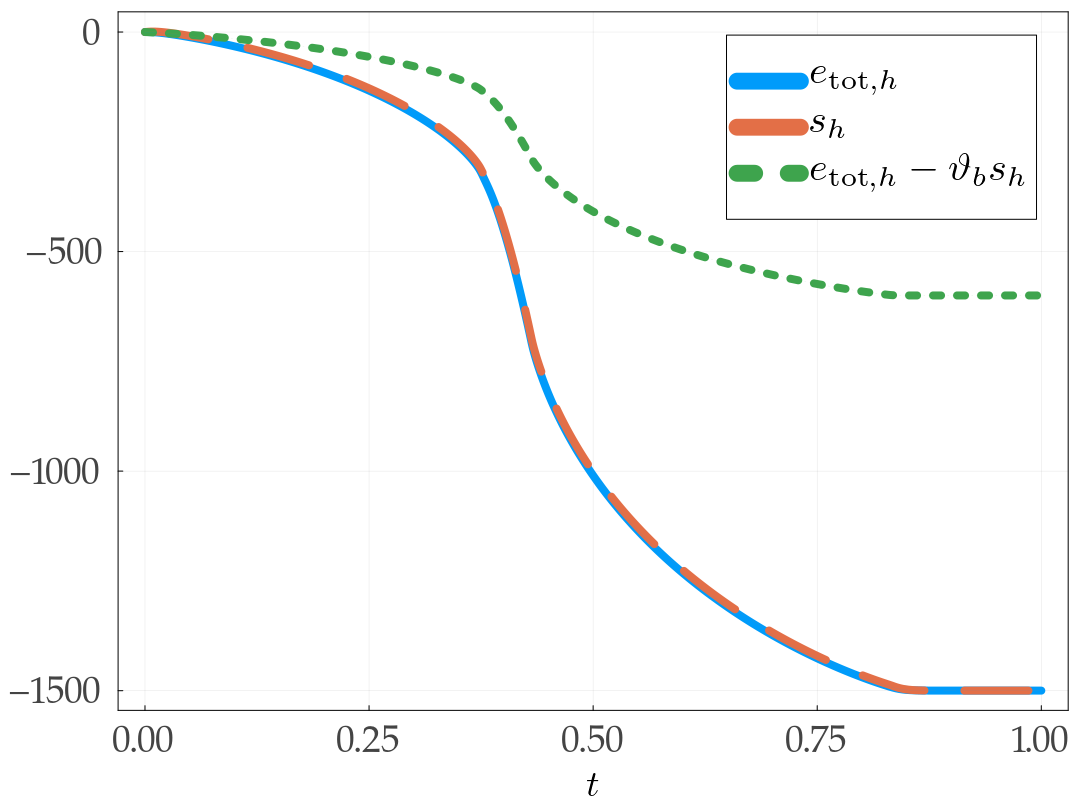}
            \caption{Integrated total energy density $e_{\mathrm{tot},h}$, entropy density $s_h$ and exergy $e_{\mathrm{tot},h}-\vtheta_bs_h$ difference over time.}
            \label{fig:ex_dendritic_struct}
        \end{figure}

\section{Conclusion}

    In this work, we have presented a fully discrete scheme for the non-isothermal Allen-Cahn-Navier-Stokes system that ensures entropy production and conserves total energy up to a numerical diffusion term, utilizing a Petrov-Galerkin-type discretization in time and standard conforming finite elements in space. The existence of at least one discrete solution has been established. The framework accommodates anisotropic interface functions for the simulation of dendritic growth. Numerical verification of the convergence of the schemes has been provided, and several applications have been demonstrated, including melt/solidification and anisotropic grain growth, while making use of different boundary conditions and external sources. 

\section*{Acknowledgment}

    The present research has been supported by the Deutsche Forschungsgemeinschaft (DFG, German Research Foundation) within the framework of the collaborative research center "Multiscale Simulation Methods for Soft-Matter Systems" (TRR 146) under Project No. 233630050 and by the SPP 2256 "Variational Methods for Predicting Complex Phenomena in Engineering Structures and Materials" under Project No. 441153493.

\section*{Conflict of interest}

	On behalf of all authors, the corresponding author states that there is no conflict of interest.
    
\appendix

\section{Proof of \cref{thm:existence}}

    The proof relies on the following consequence from topological degree theory:
    \begin{theorem}[\cite{gallouet2008unconditionally}]\label{th:topo}
        Let N and M be two positive integers and $C_1, C_2$ and $\varepsilon$ three positive constants. We define 
        \begin{align*}
            V &= \{(\x, \y) \in \RR^N\times\RR^M ~\text{such that}~\y > 0\},\\
            W &= \{(\x, \y)\in\RR^N\times\RR^M~\text{s.t.}~\|\x\|<C_1 ~\text{and}~ \varepsilon<\y<C_2\}.
        \end{align*}
        Here, the notation $\y>c$ means that each component of $\y$ is greater than a constant $c\in\RR$. Further $\|\cdot\|$ is a norm defined over $\RR^N$. Let
        $\b\in\RR^N\times\RR^M$, $\mathbf{g}$ and $\G$ be two continuous functions, respectively, from $V$ and $V\times[0, 1]$ to $\RR^N\times\RR^M$
        satisfying the following conditions:
        \begin{enumerate}[label=(\roman*)]
            \item $\G(\cdot, 1) = \mathbf{g}(\cdot)$
            \item The topological degree of $\G(\cdot, 0):=\G_0(\cdot)$ with respect to $W$ and $\b$ is equal to $d_0\neq0$, i.e.
            \begin{equation*}
                \mathrm{deg}(\G_0,W,\b):= \sum_{\x\in \G_0^{-1}(\b)}\mathrm{sgn}\left(\det\left(J_{\G_0}(\x)\right)\right) = d_0.
            \end{equation*}
            \item For all $\alpha\in[0, 1]$, if $\mathbf{v}\in V$ is such that $\G(\mathbf{v},\alpha) = \b$ then $\mathbf{v}\in W$.
        \end{enumerate}
        Then the topological degree of $\G(\cdot, 1)$ with respect to $W$ and $\b$ is also equal to $d_0\neq0$. Consequently, there exists
        at least one solution $\mathbf{v}\in W$, such that $\mathbf{g}(\mathbf{v}) = \b$.
    \end{theorem}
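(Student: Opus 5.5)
The plan is to reduce the statement to the homotopy invariance and the solution property of the Brouwer degree. First I will check the setting. $W$ is a bounded open subset of $\RR^N\times\RR^M$. Since $\varepsilon>0$, its closure $\overline W=\{\|\x\|\le C_1,\ \varepsilon\le\y\le C_2\}$ lies in $V$. Hence the restriction $\G|_{\overline W\times[0,1]}$ is a continuous map on a compact set, and $\mathbf{g}$ and $\G_0$ are continuous on $\overline W$. For a continuous map the Brouwer degree is defined by approximating with smooth maps for which $\b$ is a regular value. For such maps the degree is the sum formula in (ii), and the value does not depend on the approximation.

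The key step is to verify the admissibility condition $\b\notin\G(\partial W\times\{\alpha\})$ for every $\alpha\in[0,1]$. Take $\vv\in\partial W$ with $\G(\vv,\alpha)=\b$. Since $\vv\in\overline W\subset V$, assumption (iii) gives $\vv\in W$. Because $W$ is open, $W\cap\partial W=\emptyset$, which is a contradiction. Thus $\b\notin\G(\partial W\times[0,1])$. The set $\partial W\times[0,1]$ is compact and $\G$ is continuous, so its image is compact. Therefore $\delta:=\mathrm{dist}\bigl(\b,\G(\partial W\times[0,1])\bigr)>0$.

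With $\delta>0$ the standard homotopy invariance applies: $\alpha\mapsto\deg(\G(\cdot,\alpha),W,\b)$ is locally constant on $[0,1]$. To see this directly, take $\alpha,\alpha'$ with $\sup_{\overline W}|\G(\cdot,\alpha)-\G(\cdot,\alpha')|<\delta$, which is possible by uniform continuity on the compact set. The two degrees then coincide by the stability of the degree under perturbations smaller than the distance to the boundary image. Since $[0,1]$ is connected, a locally constant function on it is constant. Hence
$$\deg(\mathbf{g},W,\b)=\deg(\G(\cdot,1),W,\b)=\deg(\G_0,W,\b)=d_0\neq0,$$
using (i) and (ii).

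Finally, the solution property of the degree gives the existence claim. If $\b\notin\mathbf{g}(\overline W)$, then $\deg(\mathbf{g},W,\b)=0$ by the empty-preimage case of the definition. So some $\vv\in\overline W$ satisfies $\mathbf{g}(\vv)=\b$, and by (iii) with $\alpha=1$ we get $\vv\in W$.

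The main obstacle is not the algebra but making sure the degree is well defined for merely continuous $\G$. The formula in (ii) is only literal for regular values. I would handle this by quoting the classical construction of the Brouwer degree, which extends the degree to continuous maps and carries both the stability and the solution properties, rather than reproving it.
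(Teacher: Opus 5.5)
Your proof is correct. The paper gives no proof of its own; it imports the statement from Gallou\"et et al., where it is likewise presented as a direct consequence of the homotopy invariance and the solution property of the Brouwer degree. Your write-up supplies the standard details behind that citation. It also correctly isolates the one point that needs care: $\varepsilon>0$ gives $\overline{W}\subset V$, so (iii) excludes zeros of $\G(\cdot,\alpha)-\b$ on $\partial W$ uniformly in $\alpha\in[0,1]$.
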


    In order to apply the above theorem we consider the spaces
    \begin{align*}
        V &:= \{(\boldsymbol{\phi},\boldsymbol{\mu},\boldsymbol{\vtheta},\boldsymbol{u},\boldsymbol{p})\in\mathbb{R}^{4N-1+M} \text{ such that } \boldsymbol{\vtheta}>0\}, \\
        W &:=\{(\boldsymbol{\phi},\boldsymbol{\mu},\boldsymbol{\vtheta},\boldsymbol{u},\boldsymbol{p})\in\mathbb{R}^{4N-1+M} \text{ such that } \norm{\boldsymbol{\phi}} + \norm{\boldsymbol{\mu}} + \norm{\boldsymbol{u}} + \norm{\boldsymbol{p}}< C_{\phi,\mu}, \varepsilon <\boldsymbol{\vtheta}< C_\vtheta,\}, 
    \end{align*}
    where $N$ corresponds to the number of nodal values in $\Vh$ and $M$ in $\Xh^d$. As usual the vectors contain the nodal values of the finite element functions and hence we can identity the vector of nodal values and the corresponding finite element function. We introduce the auxiliary problem  $\G(\x,\alpha):=\la T_\alpha x_h,y_h \ra=\mathbf{0}$ for $x_h=(\phi_h,\mu_h,\vtheta_h,\u_h,p_h)\in V$ and all basis functions $y_h=(\psi_h,\xi_h,\omega_h,\vv_h,q_h)\in\Vh\times\Vh\times\Vh^0\times\Xh^d\times\Vh$ with $\la T_\alpha x_h,y_h \ra=0$ given by the variational problem
    \begin{align}
        &\la\dtau\phi_h,\psi_h\ra + \alpha\la\u_h,\psi_h\nabla\phi_h\ra + \alpha\la\tfrac{\M_h}{\vtheta_h}\mu_h,\psi_h\ra = 0,\label{eq:exdis1}\\
        &\la\mu_h,\xi_h\ra - \gamma\la \nabla\phi_h,\vtheta_h\nabla\xi_h+\xi_h\nabla \vtheta_h\ra - \la \partial_\phi f(\phi_h,\phi_h^n,\vtheta_h),\xi_h\ra = 0,\label{eq:exdis2}\\
        &\la\dtau s_h,\omega_h\ra - \alpha\la\tfrac{\M_h}{\vtheta_h}\mu_h,\tfrac{\omega_h}{\vtheta_h}\mu_h\ra - \alpha\la\tfrac{\eta_h}{\vtheta_h}\snorm{\Du_h}^2,\omega_h\ra\notag\\
        &\quad- \alpha\la\tfrac{\K_h}{(\vtheta_h)^2}\nabla \vtheta_h,\tfrac{\omega_h}{\vtheta_h\cdot \vtheta_h}\nabla \vtheta_h-\tfrac{1}{\vtheta_h}\nabla\omega_h\ra  \notag\\
        &\quad- \la\gamma\nabla\phi_h\dtau\phi_h + \alpha(\bsig_h+s_h\mathbf{I})\u_h,\nabla\omega_h\ra = 0,\label{eq:exdis3}\\
        &\la\dtau\u,\vv_h\ra + \alpha\cskw(\u_h,\u_h,\vv_h) + \alpha\la\eta_h\Du_h,\Dv_h\ra - \la p_{h},\div(\vv_h)\ra\notag\\
        &\quad+ \alpha\la(\bsig_h+s_h\mathbf{I})\nabla\vtheta_h-\mu_h\nabla\phi_h,\vv_h\ra = 0,\label{eq:exdis4}\\
        &\la\div(\u_h),q_h\ra = 0.\label{eq:exdis5}
    \end{align}
    
    We briefly outline the relevant steps or deviations from \cite{Hoehn26}. By construction the first condition in \cref{th:topo} is satisfied.
    
    \textbf{Conditions (ii):}
    
        We consider $\alpha=0$ and obtain the system $\G_0(\x):=\G(\x,0)=0$ given by:
        \begin{align}
            &\la\dtau\phi_h,\psi_h\ra = 0, \label{eq:alnull1}\\
            &\la\mu_h,\xi_h\ra - \gamma\la \nabla\phi_h,\vtheta_h\nabla\xi_h + \xi_h\nabla \vtheta_h\ra - \la \partial_\phi f(\phi_h,\phi_h^n,\vtheta_h),\xi_h\ra\label{eq:alnull2}\\
            &\la\dtau s_h,\omega_h\ra - \gamma\la \nabla\phi_h\dtau\phi_h ,\nabla\omega_h\ra = 0, \label{eq:alnull3}\\
            &\la\dtau\u,\vv_h\ra - \la p_{h},\div(\vv_h)\ra = 0, \label{eq:alnull4}\\
            &\la\div(\u_h),q_h\ra = 0. \label{eq:alnull5}
        \end{align}
    
        Using $\psi_h= \phi_h-\phi_h^n$ in \eqref{eq:alnull1} we find $\phi_h=\phi_h^n$. Then using Taylor's theorem in \eqref{eq:alnull3} with $\omega=\vtheta_h-\vtheta_h^n$ yields $\vtheta_h=\vtheta_h^n$. From  which $\mu_h$ can be computed and estimated as in \cite{Hoehn26}. For the velocity we use $\vv_h=\u_h-\u_h^n$ and $q_h=p_h$ to find $\u_h=\u_h^n.$ Hence, only bounds for the pressure remains. Here we invoke inf-sup stability, i.e.
        \begin{equation*}
            \norm{p_h}_{L^2} \leq C_\mathrm{is}\sup_{\vv_h\in\Xh^d} \frac{\la p_h,\div(\vv_h) \ra}{\norm{\vv_h}_{H^1}} =0,
        \end{equation*}
        as we inserted into the momentum equation. 
        
        Next, we will show that the topological degree is non-zero. To this end we compute the Jacobian matrix of $\G_0$
        \begin{equation*}
         J_{\G_0}(\x) = \begin{pmatrix}
             \mathbf{A} & \mathbf{0} & \mathbf{0} & \mathbf{0} & \mathbf{0}\\
             \mathbf{H}^\phi & \mathbf{A} & \mathbf{H}^\vtheta & \mathbf{0} & \mathbf{0}\\
              \mathbf{S}^\phi & \mathbf{0} & \mathbf{S}^\vtheta & \mathbf{0} & \mathbf{0}\\
              \mathbf{0} & \mathbf{0} & \mathbf{0} & \mathbf{M} & -\mathbf{B} \\
              \mathbf{0} & \mathbf{0} & \mathbf{0} & -\mathbf{B}^\top &\mathbf{0}
         \end{pmatrix}  
        \end{equation*}
        with the matrices
        \begin{gather*}
            \mathbf{A}_{ij} = \la \lambda_j,\lambda_i \ra \quad\mathbf{S}^\phi_{i,j} = \la \dphi s(\phi_h,\vtheta_h)\lambda_j,\lambda_i \ra, \quad \mathbf{S}^\vtheta_{i,j} = \la \dtheta s(\phi_h,\vtheta_h)\lambda_j,\lambda_i \ra,\\
            \mathbf{H}_{i,j}^\phi = \gamma\la \nabla\lambda_j,\vtheta_h\nabla\lambda_i \ra +\gamma\la \nabla\lambda_j,\lambda_i\nabla\vtheta_h \ra + \la \partial_{\phi_h} f_\phi(\phi_h,\phi_h^n,\vtheta_h)\lambda_j,\lambda_i \ra,\\
            \mathbf{H}_{i,j}^\vtheta = \gamma\la \nabla\phi_h,\lambda_j\nabla\lambda_i\ra + \gamma\la \nabla\phi_h,\lambda_i\nabla\lambda_j \ra+ \la \partial_{\vtheta_h} f_\phi(\phi_h,\phi_h^n,\vtheta_h)\lambda_j,\lambda_i \ra, \\
            \mathbf{M}_{ij} = \la \tilde\lambda_j,\tilde\lambda_i \ra\quad\mathbf{B}_{i,j} = \la \div(\tilde\lambda_i),\lambda_j- \la \lambda_j,1 \ra \ra.
        \end{gather*}
        Here the functions $\lambda_i$ are the finite element basis functions of $\Vh$ and $\tilde\lambda_i$ the basis functions of $\Xh^d$. The topological degree involves the signum of the determinant of the Jacobian matrix. We can compute that $\det(J_{\G_0}(\x))=\det(\mathbf{A})^2\det(\mathbf{S}^\vtheta)\det(\mathbf{M})\det(-\mathbf{B}^\top\mathbf{M}^{-1}\mathbf{B})$. The mass matrix $\mathbf{A}$ is positive definite, hence $\det(\mathbf{A})^2> 0$. Since $\dtheta s = -\partial_{\vtheta\vtheta} F$, we find using strict concavity assumptions of $F$, i.e \labelcref{as:exergy}, that $-\partial_{\vtheta\vtheta}F> 0$ and therefore $\mathrm{sgn}(\det(\mathbf{A})^2\det(\mathbf{S}^\vtheta)\det(\mathbf{M}))=1$. The remaining part is the determinant connected to the inf-sup stable velocity pressure-pair and we find $\det(-\mathbf{B}^\top\mathbf{M}^{-1}\mathbf{B})=(-1)^{N}\det(\mathbf{B}^\top\mathbf{M}^{-1}\mathbf{B})$. The determinant is non-zero and hence the sign depends on $N$, which is fixed for a given mesh. The topological degree is then given by
        \begin{equation*}
            \mathrm{deg}(\G_0,W,\mathbf{0}):=  \sum_{\x\in \G_0^{-1}(\mathbf{0})}\mathrm{sgn}\left(\det\left(J_{\G_0}(\x)\right)\right) = \sum_{\x\in \G_0^{-1}(\mathbf{0})} (-1)^{N}.
        \end{equation*}
        Consequently, the topological degree is non-zero.      
                
    \textbf{Condition (iii):}
    
        We follow the same route as in \cite{Hoehn26} and establish total energy conservation and entropy production for every $\alpha$, i.e.
        \begin{align*}
            \la \dtau s_h,1\ra &=  \alpha\mathcal{D}_h^{n+1},\qquad \la \dtau e_{\mathrm{tot}},1\ra \geq 0.
        \end{align*}
        From this bounds and \cref{as:exergy} we can directly extract the following estimate, i.e.
        \begin{equation*}
            \norm{\nabla\phi_h}_{L^2}^2 + \norm{\u_h}_{L^2}^2 + \alpha\tau\norm{\tfrac{\mu_h}{\vtheta_h}}_{L^2}^2  \leq C_1.
        \end{equation*}
        To gain uniform norm control of $\phi_h$ we use $\psi_h=1$ in \eqref{eq:exdis1} and obtain
         \begin{align*}
                \la \phi_h,1 \ra &= \la \phi_h^n,1 \ra - \alpha\tau\la \u_h,\nabla\phi_h \ra - \alpha\tau\la \tfrac{\M_h}{\vtheta_h}\mu_h,1 \ra \\
                 &= \la \phi_h^n,1 \ra + \alpha\tau\la\div(\u_h),\phi_h \ra - \alpha\tau\la \tfrac{\M_h}{\vtheta_h}\mu_h,1 \ra.
            \end{align*}
        Insertion of \eqref{eq:exdis5} with $q_h = \phi_h - \la \phi_h, 1\ra$ eliminates the velocity contribution. Using the entropy bounds and the boundedness assumptions for the mobility $M$ we obtain  
        \begin{align*}
                \la \phi_h,1 \ra \leq C_1 + \alpha\tau\norm*{\tfrac{\M_h}{\vtheta_h}\mu_h}_{L^2}^2 \leq C_2.
        \end{align*}
        
        With this we can resort to Poincare inequality to obtain a uniform norm bounds of $\phi_h$. From this we can simply follow the same lines to derive uniform upper and lower bounds for $\vtheta_h$. Hence, we have to provide uniform bounds for the pressure $p_h$. Again we invoke inf-sup stability which reads  
            \begin{align*}
                \norm{p_h}_{L^2}&\leq C_\mathrm{is}\sup_{\vv_h\in\Xh^d} \frac{\la p_h,\div(\vv_h) \ra}{\norm{\vv_h}_{H^1}} \\
                &\leq C_\mathrm{is}\sup_{\vv_h\in\Xh^d} \frac{1}{\norm{\vv_h}_{H^1}} \Big( \la\dtau\u_h,\vv_h\ra + \alpha\cskw(\u_h,\u_h,\vv_h) + \alpha\la\eta_h\Du_h,\Dv_h\ra \\
                &\qquad\qquad+ \alpha\la(\bsig_h+s_h\mathbf{I})\nabla\vtheta_h-\mu_h\nabla\phi_h,\vv_h\ra\Big) \\
                & \leq C_\mathrm{is}(\norm{\dtau\u_h}_{L^2} + \norm{\u_h}_{L^3}\norm{\nabla\u_h}_{L^2} + \norm{\u_h}_{L^3}\norm{\u_h}_{L^6} + \norm{\Du_h}_{L^2} \\
                &\quad+ \norm{\bsig_h}_{L^3}\norm{\nabla\vtheta_h}_{L^2} + h^{-d/2}\norm{s_h}_{L^1}\norm{\nabla\vtheta_h}_{L^\infty} + \norm{\mu_h}_{L^2}\norm{\nabla\phi_h}_{L^3}).
            \end{align*}
            For the term regarding the entropy we used the inverse inequality. At this stage all norms involving polynomials of the finite element function can be bounded using inverse inequality. This includes $\bsig_h$ but not $s_h$. For the entropy we already know by entropy production that it is bounded from below. However, by structural \cref{as:exergy,as:Gphi} and bounded solutions we also obtain a upper bound.

\bibliography{lit}

@article{Hoehn26,
	title        = {Structure-preserving approximation of the non-isothermal {C}ahn-{H}illiard system based on the entropy equation},
	author       = {Aaron Brunk and Dennis H\"{o}hn and M\'{a}ria {{Luk\'{a}\v{c}ov\'{a}-Medvi{\softd}ov\'{a}}}},
	year         = 2026,
	journal      = {Appl. Math. Comput.},
	volume       = 522,
	pages        = 129987,
	doi          = {https://doi.org/10.1016/j.amc.2026.129987},
	url          = {https://www.sciencedirect.com/science/article/pii/S0096300326000391}
}

@article{potential,
	title        = {Non-isothermal phase-field modeling of heat--melt--microstructure-coupled processes during powder bed fusion},
	author       = {Yang, Yangyiwei and K{\"u}hn, Patrick and Yi, Min and Egger, Herbet and Xu, Bai-Xiang},
	year         = 2020,
	journal      = {JOM},
	publisher    = {Springer Science and Business Media LLC},
	volume       = 72,
	number       = 4,
	pages        = {1719--1733}
}

@article{gallouet2008unconditionally,
	title        = {{An unconditionally stable pressure correction scheme for the compressible barotropic Navier-Stokes equations}},
	author       = {Gallou{\"e}t, Thierry and Gastaldo, Laura and Herbin, Raphaele and Latch{\'e}, Jean-Claude},
	year         = 2008,
	journal      = {ESAIM. Math. Model. Numer. Anal.},
	publisher    = {EDP Sciences},
	volume       = 42,
	number       = 2,
	pages        = {303--331}
}

@article{alt1992existence,
	title        = {{Existence of solutions for non-isothermal phase separation}},
	author       = {Alt, WH},
	year         = 1992,
	journal      = {Adv. Math. Sci. Appl.},
	volume       = 1,
	pages        = {319--409}
}

@book{Rubinstein1971,
	title        = {The Stefan Problem},
	author       = {L. I. Rubinstein},
	year         = 1971,
	publisher    = {American Mathematical Society},
	address      = {Providence, RI}
}

@book{Alexiades1992,
	title        = {Mathematical Modeling of Melting and Freezing Processes},
	author       = {V. Alexiades and A. D. Solomon},
	year         = 1992,
	publisher    = {Hemisphere Publishing},
	address      = {Washington, DC}
}

@article{Caginalp1986,
	title        = {An analysis of a phase field model of a free boundary},
	author       = {Caginalp,  Gunduz},
	year         = 1986,
	journal      = {Arch. Ration. Mech. Anal.},
	publisher    = {Springer Science and Business Media LLC},
	volume       = 92,
	number       = 3,
	pages        = {205–245},
	doi          = {10.1007/bf00254827},
	url          = {http://dx.doi.org/10.1007/BF00254827}
}

@article{Caginalp1989,
	title        = {Stefan and {H}ele-{S}haw type models as asymptotic limits of the phase-field equations},
	author       = {Caginalp, G.},
	year         = 1989,
	journal      = {Phys. Rev. A},
	publisher    = {American Physical Society},
	volume       = 39,
	pages        = {5887--5896},
	doi          = {10.1103/PhysRevA.39.5887},
	url          = {https://link.aps.org/doi/10.1103/PhysRevA.39.5887},
	issue        = 11,
	numpages     = {0}
}

@book{ProvatasElder2010,
	title        = {Phase-Field Methods in Materials Science and Engineering},
	author       = {N. Provatas and K. Elder},
	year         = 2010,
	publisher    = {Wiley-VCH},
	address      = {Weinheim},
	doi          = {10.1002/9783527631520}
}

@article{Steinbach2009,
	title        = {Phase-Field Models in Materials Science},
	author       = {I. Steinbach},
	year         = 2009,
	journal      = {Model. Simul. Mater. Sci. Eng.},
	volume       = 17,
	number       = 7,
	pages        = {073001},
	doi          = {10.1088/0965-0393/17/7/073001}
}

@article{KarmaRappel1998,
	title        = {Quantitative Phase-Field Modeling of Dendritic Growth in Two and Three Dimensions},
	author       = {A. Karma and W.-J. Rappel},
	year         = 1998,
	journal      = {Phys. Rev. E},
	volume       = 57,
	number       = 4,
	pages        = {4323--4349},
	doi          = {10.1103/PhysRevE.57.4323}
}

@article{BeckermannKarma1997,
	title        = {Modeling Melt Convection in Phase-Field Simulations of Solidification},
	author       = {C Beckermann and H.-J Diepers and I Steinbach and A Karma and X Tong},
	year         = 1999,
	journal      = {JJ. Comput. Phys.},
	volume       = 154,
	number       = 2,
	pages        = {468--496},
	doi          = {https://doi.org/10.1006/jcph.1999.6323},
	url          = {https://www.sciencedirect.com/science/article/pii/S0021999199963234}
}

@book{GurtinFriedAnand2010,
	title        = {The Mechanics and Thermodynamics of Continua},
	author       = {M. E. Gurtin and E. Fried and L. Anand},
	year         = 2010,
	publisher    = {Cambridge University Press},
	address      = {Cambridge},
	doi          = {10.1017/CBO9780511762956}
}

@book{KurzFisher1998,
	title        = {Fundamentals of Solidification},
	author       = {W. Kurz and D. J. Fisher},
	year         = 1998,
	publisher    = {Trans Tech Publications},
	address      = {Aedermannsdorf, Switzerland},
	edition      = {4th ed.}
}

@article{Anderson1998,
	title        = {Diffuse-Interface Methods in Fluid Mechanics},
	author       = {D. M. Anderson and G. B. McFadden and A. A. Wheeler},
	year         = 1998,
	journal      = {Annu. Rev. Fluid Mech.},
	volume       = 30,
	pages        = {139--165},
	doi          = {10.1146/annurev.fluid.30.1.139}
}

@article{King2015,
	title        = {Observation of keyhole-mode laser melting in laser powder-bed fusion additive manufacturing},
	author       = {Wayne E. King and Holly D. Barth and Victor M. Castillo and Gilbert F. Gallegos and John W. Gibbs and Douglas E. Hahn and Chandrika Kamath and Alexander M. Rubenchik},
	year         = 2014,
	journal      = {J. Mater. Process. Technol.},
	volume       = 214,
	number       = 12,
	pages        = {2915--2925},
	doi          = {https://doi.org/10.1016/j.jmatprotec.2014.06.005},
	url          = {https://www.sciencedirect.com/science/article/pii/S0924013614002283}
}

@article{Khairallah2016,
	title        = {Laser Powder-Bed Fusion Additive Manufacturing: Physics of Melting, Vapor Depression, and Spatter},
	author       = {S. A. Khairallah and A. Anderson and J. Rubenchik and W. E. King},
	year         = 2016,
	journal      = {Acta Mater.},
	volume       = 108,
	pages        = {36--45},
	doi          = {10.1016/j.actamat.2016.02.014}
}

@book{Eringen1999,
	title        = {Microcontinuum Field Theories I: Foundations and Solids},
	author       = {A. C. Eringen},
	year         = 1999,
	publisher    = {Springer},
	address      = {New York},
	doi          = {10.1007/978-1-4612-0555-5}
}

@article{Forest2009,
	title        = {Micromorphic Approach for Gradient Elasticity, Viscoplasticity, and Damage},
	author       = {S. Forest},
	year         = 2009,
	journal      = {J. Eng. Mech.},
	volume       = 135,
	number       = 3,
	pages        = {117--131},
	doi          = {10.1061/(ASCE)0733-9399(2009)135:3(117)}
}

@article{Elliott1996,
	title        = {The {C}ahn--{H}illiard model for the kinetics of phase separation},
	author       = {Charles M. Elliott},
	year         = 1996,
	journal      = {Math. Surf.},
	pages        = {35--44}
}

@article{NovickCohenPego1991,
	title        = {Stable Patterns in a Viscous Diffusion Equation},
	author       = {A. Novick-Cohen and R. L. Pego},
	year         = 1991,
	journal      = {Trans. Amer. Math. Soc.},
	publisher    = {American Mathematical Society},
	volume       = 324,
	number       = 1,
	pages        = {331--351},
	url          = {http://www.jstor.org/stable/2001511},
	urldate      = {2026-02-13}
}

@article{ColliGilardi1999,
	title        = {On A Class Of Doubly Nonlinear Evolution Equations},
	author       = {P. Colli and A. Visintin},
	year         = 1990,
	journal      = {Commun. Partial Differ. Equ.},
	publisher    = {Taylor \& Francis},
	volume       = 15,
	number       = 5,
	pages        = {737--756},
	doi          = {10.1080/03605309908820706},
	url          = {https://doi.org/10.1080/03605309908820706},
	eprint       = {https://doi.org/10.1080/03605309908820706}
}

@article{Eyre1998,
	title        = {Unconditionally Gradient Stable Time Marching the {C}ahn-{H}illiard Equation},
	author       = {Eyre,  David J.},
	year         = 1998,
	journal      = {MRS Proc.},
	publisher    = {Springer Science and Business Media LLC},
	volume       = 529,
	doi          = {10.1557/proc-529-39},
	url          = {http://dx.doi.org/10.1557/PROC-529-39}
}

@article{ShenYang2010,
	title        = {Numerical approximations of  {A}llen-{C}ahn and {C}ahn-{H}illiard equations},
	author       = {Shen,  Jie and Yang,  Xiaofeng},
	year         = 2010,
	journal      = {Discrete Contin. Dyn. Syst. A},
	publisher    = {American Institute of Mathematical Sciences (AIMS)},
	volume       = 28,
	number       = 4,
	pages        = {1669–1691},
	doi          = {10.3934/dcds.2010.28.1669},
	url          = {http://dx.doi.org/10.3934/dcds.2010.28.1669}
}

@article{GrunKlingbeil2016,
	title        = {Two-phase flow with mass density contrast: Stable schemes for a thermodynamic consistent and frame-indifferent diffuse-interface model},
	author       = {G. Gr\"{u}n and F. Klingbeil},
	year         = 2014,
	journal      = {J. Comput. Phys.},
	volume       = 257,
	pages        = {708--725},
	doi          = {https://doi.org/10.1016/j.jcp.2013.10.028},
	url          = {https://www.sciencedirect.com/science/article/pii/S0021999113007043}
}

@article{COLLI2024113461,
	title        = {{On a {C}ahn–{H}illiard system with source term and thermal memory}},
	author       = {Pierluigi Colli and Gianni Gilardi and Andrea Signori and J\"{u}rgen Sprekels},
	year         = 2024,
	journal      = {Nonlinear Anal.},
	volume       = 240,
	pages        = 113461,
	doi          = {https://doi.org/10.1016/j.na.2023.113461},
	url          = {https://www.sciencedirect.com/science/article/pii/S0362546X23002535}
}

@article{Eleuteri2015,
	title        = {On a non-isothermal diffuse interface model for two-phase flows of incompressible fluids},
	author       = {Eleuteri,  Michela and Rocca,  Elisabetta and Schimperna,  Giulio},
	year         = 2015,
	journal      = {Discrete Contin. Dyn. Syst. A},
	publisher    = {American Institute of Mathematical Sciences (AIMS)},
	volume       = 35,
	number       = 6,
	pages        = {2497–2522},
	doi          = {10.3934/dcds.2015.35.2497},
	url          = {http://dx.doi.org/10.3934/dcds.2015.35.2497}
}

@article{Colli_memory,
	title        = {{A phase field model with thermal memory governed by the entropy balance}},
	author       = {Bonetti, Elena and Colli, Pierluigi and Fremond, Michel},
	year         = 2003,
	journal      = {Math. Models Methods Appl. Sci.},
	volume       = 13,
	number       = 11,
	pages        = {1565--1588},
	doi          = {10.1142/S0218202503003033},
	url          = {https://doi.org/10.1142/S0218202503003033},
	eprint       = {https://doi.org/10.1142/S0218202503003033}
}

@article{Colli_Penrose,
	title        = {{On a Penrose-Fife phase-field model with nonhomogeneous Neumann boundary conditions for the temperature}},
	author       = {Pierluigi Colli and Gianni Gilardi and Elisabetta Rocca and Giulio Schimperna},
	year         = 2004,
	journal      = {Differ. Integral Equ.},
	publisher    = {Khayyam Publishing, Inc.},
	volume       = 17,
	number       = {5-6},
	pages        = {511 -- 534},
	doi          = {10.57262/die/1356060345},
	url          = {https://doi.org/10.57262/die/1356060345}
}

@article{KENMOCHI19941163,
	title        = {{Evolution systems of nonlinear variational inequalities arising from phase change problems}},
	author       = {Nobuyuki Kenmochi and Marek Niezg\'{o}dka},
	year         = 1994,
	journal      = {Nonlinear Anal. Theory Methods Appl.},
	volume       = 22,
	number       = 9,
	pages        = {1163--1180},
	doi          = {https://doi.org/10.1016/0362-546X(94)90235-6},
	url          = {https://www.sciencedirect.com/science/article/pii/0362546X94902356}
}

@article{GonzalezFerreiro2014,
	title        = {{A thermodynamically consistent numerical method for a phase field model of solidification}},
	author       = {B. Gonzalez-Ferreiro and H. Gomez and I. Romero},
	year         = 2014,
	journal      = {Commun. Nonlinear Sci. Numer. Simul.},
	publisher    = {Elsevier {BV}},
	volume       = 19,
	number       = 7,
	pages        = {2309--2323},
	doi          = {10.1016/j.cnsns.2013.11.016},
	url          = {https://doi.org/10.1016/j.cnsns.2013.11.016}
}

@article{Pawlow2016,
	title        = {{A thermodynamic approach to nonisothermal phase-field models}},
	author       = {Irena Paw{\l}ow},
	year         = 2016,
	journal      = {Appl. Math.},
	publisher    = {Institute of Mathematics,  Polish Academy of Sciences},
	pages        = {1--63},
	doi          = {10.4064/am2282-12-2015},
	url          = {https://doi.org/10.4064/am2282-12-2015}
}

@article{Lasarzik2021,
	title        = {Analysis of a thermodynamically consistent {N}avier–{S}tokes–{C}ahn–{H}illiard model},
	author       = {Lasarzik,  Robert},
	year         = 2021,
	journal      = {Nonlinear Anal.},
	publisher    = {Elsevier BV},
	volume       = 213,
	pages        = 112526,
	doi          = {10.1016/j.na.2021.112526},
	url          = {http://dx.doi.org/10.1016/j.na.2021.112526}
}

@article{Heida2011,
	title        = {{On the development and generalizations of {C}ahn–{H}illiard equations within a thermodynamic framework}},
	author       = {Heida,  Martin and M\'{a}lek,  Josef and Rajagopal,  K. R.},
	year         = 2011,
	journal      = {ZAMP},
	publisher    = {Springer Science and Business Media LLC},
	volume       = 63,
	number       = 1,
	pages        = {145–169},
	doi          = {10.1007/s00033-011-0139-y},
	url          = {http://dx.doi.org/10.1007/s00033-011-0139-y}
}

@article{Alessia2014,
	title        = {{Phase separation in quasi-incompressible fluids: Cahn–Hilliard model in the Cattaneo–Maxwell framework}},
	author       = {Alessia,  Berti and Bochicchio,  Ivana and Fabrizio,  Mauro},
	year         = 2014,
	journal      = {ZAMP},
	publisher    = {Springer Science and Business Media LLC},
	volume       = 66,
	number       = 1,
	pages        = {135–147},
	doi          = {10.1007/s00033-013-0395-0},
	url          = {http://dx.doi.org/10.1007/s00033-013-0395-0}
}

@article{Freistuhler2016,
	title        = {Phase-Field and {K}orteweg-Type Models for the Time-Dependent Flow of Compressible Two-Phase Fluids},
	author       = {Freist\"{u}hler,  Heinrich and Kotschote,  Matthias},
	year         = 2016,
	journal      = {Arch. Ration. Mech. Anal.},
	publisher    = {Springer Science and Business Media LLC},
	volume       = 224,
	number       = 1,
	pages        = {1–20},
	doi          = {10.1007/s00205-016-1065-0},
	url          = {http://dx.doi.org/10.1007/s00205-016-1065-0}
}

@article{Lopes2022,
	title        = {{Existence of solutions for a non-isothermal {N}avier-{S}tokes-{A}llen-{C}ahn system with thermo-induced coefficients}},
	author       = {Lopes,  Juliana Honda and Planas,  Gabriela},
	year         = 2022,
	journal      = {Electron. J. Differ. Equations},
	publisher    = {Texas State University},
	volume       = 2022,
	number       = {01–87},
	pages        = 72,
	doi          = {10.58997/ejde.2022.72},
	url          = {http://dx.doi.org/10.58997/ejde.2022.72}
}

@article{Sun2020,
	title        = {Structure-Preserving Numerical Approximations to a Non-isothermal Hydrodynamic Model of Binary Fluid Flows},
	author       = {Shouwen Sun and Jun Li and Jia Zhao and Qi Wang},
	year         = 2020,
	journal      = {J. Sci. Comput.},
	publisher    = {Springer Science and Business Media {LLC}},
	volume       = 83,
	number       = 3,
	doi          = {10.1007/s10915-020-01229-6},
	url          = {https://doi.org/10.1007/s10915-020-01229-6}
}

@article{BrunkPamm,
	title        = {{Nonisothermal {C}ahn–{H}illiard {N}avier–{S}tokes system}},
	author       = {Brunk, Aaron and Schumann, Dennis},
	year         = 2024,
	journal      = {PAMM},
	volume       = 24,
	number       = 2,
	pages        = {e202400060},
	doi          = {https://doi.org/10.1002/pamm.202400060}
}

@article{BrunkCMAM,
	title        = {Variational Approximation for a Non-Isothermal Coupled Phase-Field System: Structure-Preservation \& {N}onlinear Stability},
	author       = {Aaron Brunk and Oliver Habrich and Timileyin David Oyedeji and Yangyiwei Yang and Bai-Xiang Xu},
	year         = 2025,
	journal      = {Comput. Methods Appl. Math.},
	volume       = 25,
	number       = 2,
	pages        = {373--396},
	doi          = {doi:10.1515/cmam-2023-0274},
	url          = {https://doi.org/10.1515/cmam-2023-0274},
	lastchecked  = {2025-01-09}
}

@article{SUN2024161,
	title        = {A thermodynamically consistent phase-field model and an entropy stable numerical method for simulating two-phase flows with thermocapillary effects},
	author       = {Yanxiao Sun and Jiang Wu and Maosheng Jiang and Steven M. Wise and Zhenlin Guo},
	year         = 2024,
	journal      = {Appl. Numer. Math.},
	volume       = 206,
	pages        = {161--189},
	doi          = {https://doi.org/10.1016/j.apnum.2024.08.010},
	url          = {https://www.sciencedirect.com/science/article/pii/S016892742400206X}
}

@inbook{Brunk2025,
	title        = {Structure-Preserving Approximation for the Non-isothermal Cahn-Hilliard-Navier-Stokes System},
	author       = {Brunk,  Aaron and Schumann,  Dennis},
	year         = 2025,
	booktitle    = {Numerical Mathematics and Advanced Applications ENUMATH 2023,  Volume 1},
	publisher    = {Springer Nature Switzerland},
	pages        = {188–197},
	doi          = {10.1007/978-3-031-86173-4\_19},
	isbn         = 9783031861734
}

@article{RuedaGmez2024,
	title        = {Numerical Analysis for a Non-isothermal Incompressible {N}avier–{S}tokes–{A}llen–{C}ahn System},
	author       = {Rueda-G\'{o}mez,  Diego A. and Rueda-Fern\'{a}ndez,  Elian E. and Villamizar-Roa,  \'{E}lder J.},
	year         = 2024,
	journal      = {J. Math. Fluid Mech.},
	publisher    = {Springer Science and Business Media LLC},
	volume       = 26,
	number       = 4,
	doi          = {10.1007/s00021-024-00898-9},
	url          = {http://dx.doi.org/10.1007/s00021-024-00898-9}
}

@article{ANDERSON2000175,
	title        = {A phase-field model of solidification with convection},
	author       = {D.M. Anderson and G.B. McFadden and A.A. Wheeler},
	year         = 2000,
	journal      = {Physica D},
	volume       = 135,
	number       = 1,
	pages        = {175--194},
	doi          = {https://doi.org/10.1016/S0167-2789(99)00109-8},
	url          = {https://www.sciencedirect.com/science/article/pii/S0167278999001098}
}

@article{WU_2017,
	title        = {Well-posedness of a diffuse-interface model for two-phase incompressible flows with thermo-induced Marangoni effect},
	author       = {Wu, Hao},
	year         = 2017,
	journal      = {Eur. J. Appl. Math.},
	volume       = 28,
	number       = 3,
	pages        = {380–434},
	doi          = {10.1017/S0956792516000322}
}

@article{Liang,
	title        = {Non-isothermal phase-field model of molten pool dynamics coupled with thermocapillary effect and phase change in metal additive manufacturing},
	author       = {Liang, Chenguang and Shen, Zhihai and Yi, Min},
	year         = 2026,
	journal      = {Phys. Fluids},
	volume       = 38,
	number       = 1,
	pages        = {013309},
	doi          = {10.1063/5.0310906},
	url          = {https://doi.org/10.1063/5.0310906},
}

@article{Hossain2024,
	title        = {Global weak solutions to a phase-field model for seawater solidification with melt convection},
	author       = {Hossain,  Md Akram and Ma,  Li},
	year         = 2024,
	journal      = {Part. Differ. Equ. Appl.},
	publisher    = {Springer Science and Business Media LLC},
	volume       = 5,
	number       = 4,
	doi          = {10.1007/s42985-024-00290-2},
	url          = {http://dx.doi.org/10.1007/s42985-024-00290-2}
}

\end{document}